\providecommand{\noopsort}[1]{}
\theoremstyle{plain}
\newtheorem{theorem}{Theorem}[section]
\newtheorem{proposition}[theorem]{Proposition}
\newtheorem{lemma}[theorem]{Lemma}
\newtheorem{corollary}[theorem]{Corollary}
\newtheorem*{claim*}{Claim}
\theoremstyle{definition}
\newtheorem{definition}[theorem]{Definition}
\newtheorem{remark}[theorem]{Remark}
\newtheorem*{notation}{Notation}
\newtheorem*{relatedwork}{Related work}
\newtheorem*{ack}{Acknowledgements}
\newlength\xvec@height%
\newlength\xvec@depth%
\newlength\xvec@width%
\newcommand{\xvec}[2][]{%
  \ifmmode%
    \settoheight{\xvec@height}{$#2$}%
    \settodepth{\xvec@depth}{$#2$}%
    \settowidth{\xvec@width}{$#2$}%
  \else%
    \settoheight{\xvec@height}{#2}%
    \settodepth{\xvec@depth}{#2}%
    \settowidth{\xvec@width}{#2}%
  \fi%
  \def\xvec@arg{#1}%
  \def\xvec@dd{:}%
  \def\xvec@d{.}%
  \raisebox{.2ex}{\raisebox{\xvec@height}{\rlap{%
    \kern.05em
    \begin{tikzpicture}[scale=1]
    \pgfsetroundcap
    \draw (.05em,0)--(\xvec@width-.05em,0);
    \draw (\xvec@width-.05em,0)--(\xvec@width-.15em, .075em);
    \draw (\xvec@width-.05em,0)--(\xvec@width-.15em,-.075em);
    \ifx\xvec@arg\xvec@d%
      \fill(\xvec@width*.45,.5ex) circle (.5pt);%
    \else\ifx\xvec@arg\xvec@dd%
      \fill(\xvec@width*.30,.5ex) circle (.5pt);%
      \fill(\xvec@width*.65,.5ex) circle (.5pt);%
    \fi\fi%
    \end{tikzpicture}%
  }}}%
  #2%
}
\renewcommand{\vec}[1]{\xvec[]{#1}}
\newcommand{\op}{\mathrm{op}} 
\DeclareMathOperator{\V}{\mathbb{V}} 
\DeclareMathOperator{\Max}{Max} 
\DeclareMathOperator{\C}{C}
\newcommand{\m}{\mathfrak{m}} 
\newcommand{\f}{\mathfrak{f}} 
\newcommand{\g}{\mathfrak{g}} 
\newcommand{\h}{\mathfrak{h}} 
\newcommand{\id}[1]{{\rm Id}_{#1}}
\DeclareMathOperator{\ev}{ev} 
\renewcommand{\theta}{\vartheta}
\renewcommand{\phi}{\varphi}
\newcommand{\w}{\widehat} 
\newcommand{\li}{\widetilde} 
\renewcommand{\tilde}{\widetilde}
\renewcommand{\o}{\overline}
\renewcommand{\u}{\underline}
\newcommand{\Lan}{\mathbb{L}} 
\newcommand{\Log}{\vDash_{\Delta}} 
\newcommand{\Lo}{\vdash_{\Delta}} 
\newcommand{\var}{\mathbf{Var}}
\DeclareMathOperator{\ha}{\triangleleft} 
\DeclareMathOperator{\fm}{\mathnormal{f}_{\frac{1}{2}}}
\renewcommand{\d}{\ensuremath{\textrm d}} 
\newcommand{\x}{\overline{x}}
\newcommand{\y}{\overline{y}}
\newcommand{\z}{\overline{z}}
\newcommand{\T}{\mathcal{T}} 
\newcommand{\F}{\mathcal{F}} 
\newcommand{\LT}{\mathcal{LT}} 
\newcommand{\A}{A} 
\newcommand{\B}{B} 
\newcommand{\ep}{\varepsilon}
\newcommand{\Si}{\Sigma}
\newcommand{\Ga}{\Gamma}
\renewcommand{\Form}{\mathbf{Form}}
\DeclareMathOperator{\card}{\mathrm{Card}}
\DeclareMathOperator{\Maxi}{\mathfrak{M}} 
\renewcommand{\k}{\Bbbk} 
\newcommand{\Luk}{\L_{\infty}} 
\DeclareMathOperator{\Set}{\mathscr{SET}}
\DeclareMathOperator{\KH}{\mathscr{KH}} 
\DeclareMathOperator{\lGr}{\mathscr{LA}}
\DeclareMathOperator{\MV}{\mathscr{MV}}
\renewcommand{\int}{[0,1]}
\newcommand{\R}{\mathbb{R}}
\newcommand{\N}{\mathbb{N}}
\newcommand{\epi}{\twoheadrightarrow}
\newcommand{\mono}{\hookrightarrow}
\newcommand{\quot}{\uprho}
\renewcommand{\leq}{\leqslant}
\newcommand{\Th}[1]{\Sigma\llbracket #1 \rrbracket} 
\title{Beth definability and the Stone-Weierstrass Theorem}
\thanks{This project has received funding from the European Union's Horizon 2020 research and innovation programme under the Marie Sk{\l}odowska-Curie grant agreement No 837724, and from the grant GA17-04630S of the Czech Science Foundation.}
\thanks{\emph{To appear in the Annals of Pure and Applied Logic.}}
\author{Luca Reggio}
\address{Department of Computer Science, University of Oxford, UK}
\email{luca.reggio@cs.ox.ac.uk}
\begin{document}

\maketitle

\begin{abstract}
The Stone-Weierstrass Theorem for compact Hausdorff spaces is a basic result of functional analysis with far-reaching consequences. We introduce an equational logic $\Log$ associated with an infinitary variety $\Delta$ and show that the Stone-Weierstrass Theorem is a consequence of the Beth definability property of $\Log$, stating that every implicit definition can be made explicit. Further, we define an infinitary propositional logic $\Lo$ by means of a Hilbert-style calculus and prove a strong completeness result whereby the semantic notion of consequence associated with $\Lo$ coincides with $\Log$.
\end{abstract}

\section{Introduction}
Weierstrass' Approximation Theorem states that any continuous real-valued function defined on a closed real interval can be uniformly approximated by polynomials. In 1937, Marshall Stone proved a vast generalisation of this theorem~\cite{st2}; nowadays known as the Stone-Weierstrass Theorem for compact Hausdorff spaces, this is a fundamental result of functional analysis with far-reaching consequences.

Let $X$ be a non-empty compact Hausdorff space and let $\C(X,\R)$ be the collection of all continuous functions $X\to \R$, where we denote by $\R$ the set of real numbers equipped with the usual Euclidean topology. 
The set $\C(X,\R)$ is equipped with the \emph{uniform metric} $\varrho\colon \C(X,\R)\times\C(X,\R)\to [0,\infty)$ given by
\[
\forall \f,\g\in\C(X,\R), \ \ \varrho(\f,\g)\coloneqq \sup_{x\in X}{|\f(x)-\g(x)|}.
\]
The Stone-Weierstrass Theorem provides sufficient conditions for a subset $G\subseteq \C(X,\R)$ to be dense in the topology induced by the uniform metric. Recall that a subset $G$ of $\C(X,\R)$ is said to \emph{separate the points of $X$} if, for any two distinct points $x,y\in X$, there exists $\f\in G$ such that $\f(x)\neq \f(y)$. The Stone-Weierstrass Theorem can be phrased as follows:
\begin{theorem}[{cf.~\cite[Theorem~82]{st2}}]\label{t:SW-lattice-version}
Let $X$ be a non-empty compact Hausdorff space and let $G$ be a subset of $\C(X,\R)$ satisfying the following properties:
\begin{enumerate}[label=\textup{(\roman*)}]
\item $G$ separates the points of $X$;
\item $G$ contains the constant function of value $1$;
\item $\f\in G$ and $r\in\R$ imply $r \cdot \f\in G$;
\item if $\f,\g\in G$, then $\f+\g\in G$ and $\max{\{\f,\g\}}\in G$.
\end{enumerate}
Then $G$ is dense in $\C(X,\R)$ in the topology induced by the uniform metric.
\end{theorem}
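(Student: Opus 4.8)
The plan is to prove directly that $G$ is dense, without first passing to its closure: given $\f\in\C(X,\R)$ and $\ep>0$, I will exhibit some $\g\in G$ with $\varrho(\f,\g)\leq\ep$, which is enough after running the argument with $\ep/2$ in place of $\ep$.

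First I would record two immediate consequences of the hypotheses. From (ii), (iii) and the first conjunct of (iv), the set $G$ is a linear subspace of $\C(X,\R)$ containing every constant function; and from (iii) together with the second conjunct of (iv), $G$ is also closed under binary minima, since $\min\{\f,\g\}=-\max\{-\f,-\g\}$. The crucial tool is a two-point interpolation lemma: for any two distinct points $x,y\in X$ and any reals $a,b$ there exists $\h\in G$ with $\h(x)=a$ and $\h(y)=b$. To see this, use (i) to choose $\f_0\in G$ with $\f_0(x)\neq\f_0(y)$; because $G$ is a linear space containing the constants, one can solve for scalars $\alpha,\beta$ so that $\h\coloneqq\alpha\f_0+\beta\cdot 1$ attains the prescribed values at $x$ and $y$, the linear system being non-degenerate precisely because $\f_0(x)\neq\f_0(y)$.

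With the lemma available, the argument is the classical double compactness sweep. Fix $\f$ and $\ep$. For each pair $x,y\in X$ pick $\h_{x,y}\in G$ with $\h_{x,y}(x)=\f(x)$ and $\h_{x,y}(y)=\f(y)$ (when $x=y$, take the constant function of value $\f(x)$). Holding $x$ fixed, the sets $\{z\in X:\h_{x,y}(z)<\f(z)+\ep\}$, for $y$ ranging over $X$, form an open cover of $X$ (each containing the point $y$); by compactness finitely many of them, indexed by $y_1,\dots,y_n$, already cover $X$, and then $\h_x\coloneqq\min\{\h_{x,y_1},\dots,\h_{x,y_n}\}$ lies in $G$, satisfies $\h_x(z)<\f(z)+\ep$ for every $z\in X$, and still has $\h_x(x)=\f(x)$. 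Now let $x$ vary: the sets $\{z\in X:\h_x(z)>\f(z)-\ep\}$ form an open cover of $X$, so a finite subcover indexed by $x_1,\dots,x_m$ yields $\g\coloneqq\max\{\h_{x_1},\dots,\h_{x_m}\}\in G$ with $\f(z)-\ep<\g(z)<\f(z)+\ep$ for all $z\in X$, i.e.\ $\varrho(\f,\g)\leq\ep$.

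I expect the only genuine subtlety to be bookkeeping the directions of the two one-sided estimates: the inner (min) step must be set up so that each $\h_x$ lies below $\f+\ep$ everywhere yet is pinned to the value $\f(x)$ at $x$, so that in the outer (max) step one can force $\g$ above $\f-\ep$ near every $x$ without ever exceeding $\f+\ep$. Compactness of $X$ is invoked twice and is essential, while the Hausdorff assumption is used only through the separation hypothesis (i), which is exactly what makes the interpolation lemma go through.
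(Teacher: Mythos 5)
Your proof is correct, but it is a genuinely different argument from the one in the paper: it is the classical direct lattice proof (essentially the one in~\cite{StoneWe1948} and \cite[Theorem~7.29]{HR65}, which the paper explicitly cites as the standard route), built on two-point interpolation from separation, constants and linearity, followed by the min/max double compactness sweep. The paper instead deliberately avoids this and derives the theorem from the Beth definability property of $\Log$: it passes to the uniform closure $\overline{G}$, invokes Mundici's equivalence (Theorem~\ref{t:gamma}) to reduce the claim $\overline{G}=\C(X,\R)$ to surjectivity of the inclusion $\Gamma(\overline{G})\hookrightarrow\C(X,\int)$, checks that $\Gamma(\overline{G})$ is a point-separating $\delta$-subalgebra of $\C(X,\int)$ (closure under $\delta$ coming from closure under uniform limits and divisibility), and then concludes via Theorems~\ref{th:Beth-prop} and~\ref{t:equivalent-formulations} that $\eta_{\Gamma(\overline{G})}$ is an isomorphism. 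What your approach buys is brevity and self-containedness: it needs nothing beyond the hypotheses and compactness of $X$. What it gives up is the entire point of the paper, namely exhibiting the theorem as a consequence (indeed an equivalent form, cf.\ Remark~\ref{rem:SW-implies-Beth}) of the Beth definability property; note that your $\ep$-sweep is not discarded there but reappears in logical guise inside the proof of Theorem~\ref{th:Beth-prop}, where the two finite subcovers are extracted via the compactness of $\Log$ (Lemma~\ref{l:compactness-logic-delta}) applied to the theories $\Gamma_{u,v}$ and $\Gamma'_u$, producing exactly your functions $\h_x$ and $\g$ as the terms $\lambda_u$ and $\mu$. One cosmetic remark: your strict pointwise bounds only yield $\varrho(\f,\g)\leq\ep$, but as you observe this suffices for density.
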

\begin{remark}
For a nice exposition of the previous result, along with a proof relying on the closure of the set $G$ under lattice-theoretic operations, see~\cite{StoneWe1948}. The version stated above, involving the operations $+$ and $\max$, seems to be more widespread in analysis (see, e.g., \cite[Theorem~7.29]{HR65}).
\end{remark}

On the other hand, Beth definability is a strong property that a logic may, or may not, satisfy. Informally, it states that every property that can be defined implicitly admits an explicit definition. Beth's theorem, proved in 1953, states that first-order logic has the Beth definability property~\cite{Beth1953}. This result sheds light on a phenomenon that occurs frequently in the mathematical practice. For instance, a real closed field $F$ (i.e., a field elementarily equivalent to $\R$) admits a unique total order that turns it into an ordered field. This order is given by the first-order formula
\[
\forall x,y\in F \ (x\leq y \, \leftrightarrow \, \exists z \, (y-x= z^2)).
\]
Beth's theorem tells us that this is no coincidence: whenever a property can be described in a unique way, there is a first-order formula explicitly defining it.
In the context of algebraic logic, the Beth definability property has been extensively investigated, cf.~\cite[\S 5.6]{HMT1985} or \cite{Hoogland2000,BH2006}. For a precise definition in our setting, see Section~\ref{s:definability}.

The aim of this paper is to show that the Stone-Weierstrass Theorem for compact Hausdorff spaces can be seen as a consequence of the Beth definability property for a certain equational logic $\Log$, thus obtaining a ``logical proof'' of the Stone-Weierstrass Theorem.
(In fact, the Beth definability property for $\Log$ can be regarded as an equivalent form of the Stone-Weierstrass Theorem, cf.\ Remark~\ref{rem:SW-implies-Beth}.)
The logic $\Log$ is the equational consequence relation associated with the variety $\Delta$ of $\delta$-algebras introduced in~\cite{MR2017}. The latter is not a variety in the usual sense, \emph{\`{a} la} Birkhoff, in that its signature contains an operation symbol of countably infinite arity. Each member of $\Delta$ has a reduct of MV-algebra, the algebraic counterpart to {\L}ukasiewicz infinite-valued propositional logic $\Luk$~\cite{Luk1920,LT1930}, and so $\Log$ is an infinitary extension of $\Luk$ (this extension is, in fact, conservative: every equation between MV-algebraic terms that holds in all $\delta$-algebras must hold in all MV-algebras). 

\vspace{0.5em}
This paper is organised as follows. In Section~\ref{s:prel} we provide the necessary background on Abelian $\ell$-groups (which capture the structure of the algebras of functions $\C(X,\R)$), MV-algebras, and the infinitary variety $\Delta$ of $\delta$-algebras. Further, we recall the Cignoli-Dubuc-Mundici adjunction between MV-algebras and the dual of  compact Hausdorff spaces, and its restriction to the variety $\Delta$. The logic $\Log$ associated with $\Delta$ is introduced in Section~\ref{s:logic-delta}, where some of its main properties are established. In Section~\ref{s:definability}, we prove that $\Log$ has the Beth definability property and show how the Stone-Weierstrass Theorem can be deduced from it. Finally, in Section~\ref{s:Hilbert-calculus} a logic $\Lo$ is introduced by means of a Hilbert-style calculus, and shown to be strongly complete with respect to a $\int$-valued semantics which, up to a translation between terms and equations, coincides with the consequence relation $\Log$.

\begin{relatedwork}
An algebraic treatment of the Stone-Weierstrass Theorem was provided by Banaschewski~\cite{Banaschewski2001}, who showed that this result is ultimately a consequence of more general properties of the class of $f$-rings. Also, the connection between the Stone-Weierstrass Theorem and the surjectivity of epimorphisms (cf.\ Theorem~\ref{t:equivalent-formulations}), e.g.\ in the category of commutative $\mathrm{C}^*$-algebras, has long been known, cf.~\cite{Reid1970} and also~\cite{BMO2013}. In this article, we focus on the logical gist of the Stone-Weierstrass Theorem and introduce an infinitary equational logic $\Log$ which may be thought of as a ``logic for compact Hausdorff spaces'' (in the same way that classical propositional logic is a logic for zero-dimensional compact Hausdorff spaces by Stone duality for Boolean algebras~\cite{st1}).\footnote{It is well known that the category $\KH$ of compact Hausdorff spaces and continuous maps is not dually equivalent to any Birkhoff variety of algebras. Hence, there is no \emph{finitary} equational logic for compact Hausdorff spaces. For more on the axiomatisability of the dual of $\KH$, see e.g.~\cite{LRVpreprint,MR2017}.} Recently, a modal calculus for compact Hausdorff spaces, based on de Vries duality, was introduced in~\cite{BBSV2019}. For an approach based on multilingual sequent calculus, cf.~\cite{Moshier2004}.
\end{relatedwork}

\begin{notation}
Throughout this paper, we denote continuous functions by $\f,\g,\h$, and reserve the symbols $f,g,h$ for variable assignments or algebra homomorphisms. MV- and $\delta$-algebras, as well as their underlying sets, are denoted by $\A,\B$. If $\leq$ is a lattice order, binary infima and binary suprema are denoted by $\wedge$ and $\vee$, respectively.
\end{notation}

\section{Preliminaries and background}\label{s:prel}
\subsection{Abelian $\ell$-groups and MV-algebras}\label{s:l-groups-MV}
An \emph{Abelian $\ell$-group} is an Abelian group $G$, written additively,
equipped with a lattice order $\leq$ invariant under translations, i.e.,
\[
\forall a,b,c\in G, \ \ a\leq b \ \Longrightarrow \ a+c\leq b+c.
\]
We say that $G$ is \emph{unital} if it is equipped with a distinguished element $u\in G$ (the \emph{unit}) such that, for each $a \in G$, there is an $n\in \N$ such that $a\leq nu$. A prime example of unital Abelian $\ell$-group is $(\R,1)$, where $\R$ is the additive group of real numbers equipped with the usual total order. More generally, for any topological space $X$, the set $\C(X,\R)$ of all continuous $\R$-valued functions on $X$ is an Abelian $\ell$-group with respect to pointwise operations. The constant function $1_X\colon X\to \R$ of value $1$ is a unit for $\C(X,\R)$.

\begin{remark}
Throughout this paper, the expression $\C(X,\R)$ will refer to either the structure of unital Abelian $\ell$-group, or that of topological space with the topology induced by the uniform metric. It will always be clear from the context which structure we are considering.
\end{remark}

Let us denote by $\lGr$ the category of unital Abelian $\ell$-groups and \emph{unital $\ell$-homomorphisms}, i.e.\ functions that are both lattice and group homomorphisms and preserve the unit. Subobjects in $\lGr$, i.e.\ sublattice subgroups containing the unit, are called \emph{unital $\ell$-subgroups}. 

Given an arbitrary unital Abelian $\ell$-group $(G,u)$, we can equip its \emph{unit interval} 
\[
\Gamma(G,u)\coloneqq \{a\in G \mid 0 \leq a \leq u\}
\]
with the operations 
\begin{gather*}
a\oplus b \coloneqq  (a+b)\wedge u \ \text{ and } \ \neg a \coloneqq  u-a.
\end{gather*}
(When the choice of the unit is clear from the context, we write $\Gamma(G)$ instead of $\Gamma(G,u)$.)
The tuple $(\Gamma(G),\oplus,\neg,0)$ carries the structure of an MV-algebra.

An \emph{MV-algebra} is an algebra $(\A,\oplus,\neg,0)$, in the algebraic language $\Lan_{MV}\coloneqq\{\oplus,\neg,0\}$ of type $(2,1,0)$, satisfying the following conditions for all $a,b\in \A$:\footnote{The axiomatisation presented here is redundant: it was shown in~\cite{Kolarik2013} that commutativity of the monoid operation $\oplus$ follows from the other axioms.}
\begin{multicols}{2}
\begin{enumerate}[label=(\roman*), itemsep=0ex]
\item\label{ax-MV1} $(\A,\oplus,0)$ is a commutative monoid
\item\label{ax-MV2} $\neg$ is an involution, i.e.\ $\neg\neg a=a$
\item\label{ax-MV3} $a\oplus \neg 0 =\neg 0$
\item\label{ax-MV4} $\neg(\neg a \oplus b)\oplus b = \neg(\neg b \oplus a)\oplus a$
\end{enumerate}
\end{multicols}
While Boolean algebras are the algebraic counterpart to classical propositional logic, MV-algebras are the algebraic counterpart to {\L}ukasiewicz infinite-valued propositional logic $\Luk$. For more details, we refer the interested reader to~\cite{cdm2000,M11}. 

The operation $\oplus$ should be regarded as a strong disjunction, and the involution $\neg$ plays the role of a negation which allows us to define a strong conjunction by
\[
a\odot b\coloneqq \neg(\neg a\oplus \neg b).
\]
It will be useful to define a further connective:
\begin{align*}
a\ominus b \coloneqq  a\odot \neg b.
\end{align*}
We denote by $\MV$ the category of MV-algebras and \emph{MV-homomorphisms}, i.e.\ functions preserving $\oplus$, $\neg$, and $0$.
Any MV-algebra has an underlying structure of distributive lattice bounded below by $0$ and above by $1\coloneqq\neg 0$. Binary joins (also known as weak disjunctions) are given by
\[
a \vee b = \neg(\neg a \oplus  b)\oplus b.
\]
Thus, item~\ref{ax-MV4} above states that $a\vee b=b\vee a$.
Binary meets (also known as weak conjunctions) are given by the De Morgan condition  $a \wedge b = \neg (\neg a \vee \neg b)$.
Boolean algebras are precisely those MV-algebras that satisfy the law of excluded middle $a\vee\neg a=1$~\cite[Theorems~1.16 and~1.17]{Chang58}.

The \emph{standard MV-algebra} is the real unit interval $\int$ with neutral element $0$ in which the operations $\oplus$ and $\neg$ are defined, respectively, by
\begin{align*}
a\oplus b \coloneqq  \min{\{1,a+b\}}
\end{align*}
(the connective $\oplus$ is frequently referred to as \emph{truncated addition}) and 
\begin{align*}
\neg a\coloneqq 1-a.
\end{align*} 
The derived operations $\odot$ and $\ominus$ are then interpreted as $a\odot b=\max{\{0,a+b-1\}}$ and $a\ominus b= \max{\{0,a-b\}}$ ($\ominus$ is often called \emph{truncated subtraction}).
The underlying lattice order of this MV-algebra coincides with the total order that $\int$ inherits from the real numbers, cf.~\cite[pp.~9--10]{cdm2000}. When we 
refer to $\int$ as an MV-algebra, we always mean the structure just described. 
Note that the standard MV-algebra $\int$ coincides with $\Gamma(\R,1)$, the unit interval of the unital Abelian $\ell$-group $(\R,1)$. Similarly, for any topological space $X$, the unit interval of the Abelian $\ell$-group $\C(X,\R)$ with unit $1_X$ can be identified with the MV-algebra 
\begin{align*}
\C(X,\int)\coloneqq \{\f\colon X\to\int\mid \f \,\text{ is continuous}\},
\end{align*}
where $\int$ is equipped with the Euclidean topology and the MV-algebraic operations of $\C(X,\int)$ are defined pointwise.

Every unital $\ell$-homomorphism $(G,u)\to (G',u')$ between unital Abelian $\ell$-groups restricts to an MV-homomorphism $\Gamma(G,u)\to\Gamma(G',u')$. In fact, this assignment yields a functor $\Gamma\colon \lGr\to\MV$. 
In 1986, Mundici showed that $\Gamma$ is an equivalence of categories.\footnote{We shall only need that $\Gamma$ is full and faithful, hence it reflects isomorphisms. Cf.\ the proof of Theorem~\ref{t:SW-lattice-version} on page~\pageref{proof:SW}. This fact is easier to prove and corresponds to Propositions~3.4 and~3.5 in~\cite{Mundici86}.} For a proof of this fact, and an explicit description of the quasi-inverse functor, see~\cite{Mundici86} or~\cite[Theorems~7.1.2 and~7.1.7]{cdm2000}.
\begin{theorem}[Mundici's Equivalence]\label{t:gamma}
$\Gamma\colon\lGr\to\MV$ is an equivalence of categories.
\end{theorem}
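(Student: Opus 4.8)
The plan is to prove Mundici's Equivalence by exhibiting an explicit quasi-inverse functor $\Xi\colon\MV\to\lGr$ and checking that the two composites are naturally isomorphic to the respective identity functors. The construction of $\Xi$ on objects follows the classical ``good sequences'' approach: given an MV-algebra $(\A,\oplus,\neg,0)$, I would form the monoid $M_\A$ of \emph{good sequences} over $\A$, i.e.\ eventually-zero sequences $(a_1,a_2,\dots)$ of elements of $\A$ satisfying $a_i\oplus a_{i+1}=a_i$ for all $i$, with addition defined by the Chang-style convolution $(a+b)_n=a_n\oplus(a_{n-1}\odot b_1)\oplus\cdots\oplus(a_1\odot b_{n-1})\oplus b_n$. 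One shows $M_\A$ is a cancellative commutative monoid, hence embeds into its Grothendieck group $G_\A$; the lattice order on $\A$ induces one on $M_\A$ and then on $G_\A$, turning $(G_\A,u_\A)$ into a unital Abelian $\ell$-group, where the unit $u_\A$ is the class of the good sequence $(1,0,0,\dots)$. On morphisms, an MV-homomorphism $h\colon\A\to\B$ acts coordinatewise on good sequences and extends uniquely to a unital $\ell$-homomorphism $\Xi(h)$.

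The first main step is then to verify $\Gamma\circ\Xi\cong\mathrm{Id}_{\MV}$: one checks that the unit interval $\Gamma(G_\A,u_\A)=\{x\in G_\A\mid 0\leq x\leq u_\A\}$ is precisely the image of the good sequences of the form $(a,0,0,\dots)$, and that the map $a\mapsto[(a,0,0,\dots)]$ is an MV-isomorphism $\A\to\Gamma(\Xi(\A))$ natural in $\A$. The truncated sum $\oplus$ on $\Gamma(\Xi(\A))$ matches $\oplus$ on $\A$ because $(a,0,\dots)+(b,0,\dots)=(a\oplus b,\,a\odot b,\,0,\dots)$ and capping at $u_\A$ discards the second coordinate; the involution matches because $u_\A-(a,0,\dots)=(\neg a,0,\dots)$. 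The second main step is the converse $\Xi\circ\Gamma\cong\mathrm{Id}_{\lGr}$: given $(G,u)$, the good sequences over $\Gamma(G,u)$ biject with the nonnegative cone $G^+$ via $(a_1,\dots,a_n,0,\dots)\mapsto a_1+\cdots+a_n$ (this is where the good-sequence condition $a_i\oplus a_{i+1}=a_i$, i.e.\ $a_i+a_{i+1}\leq u$ and the ensuing ``carrying'' is exactly the decomposition of a nonnegative group element along multiples of $u$), and this extends to a group isomorphism $G_{\Gamma(G,u)}\to G$ sending $u_{\Gamma(G,u)}$ to $u$ and respecting the order; naturality in $(G,u)$ is then routine.

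The main obstacle, and the heart of the argument, is showing that the monoid of good sequences is cancellative and that the order it inherits from $\A$ extends to a genuine lattice order on the Grothendieck group $G_\A$ compatible with addition — equivalently, that $G_\A^+=M_\A$ and that $M_\A$ is the nonnegative cone of a lattice-ordered group. This requires the identities of MV-algebras to be deployed carefully: one must prove, purely equationally, that good sequences are closed under the convolution sum, that addition is associative and cancellative, and that suprema and infima of good sequences computed coordinatewise (after suitable alignment) are again good. Establishing decomposition/interpolation properties of $M_\A$ (a Riesz-type decomposition) is the technically delicate point; everything else — functoriality, the explicit descriptions of $\Gamma\circ\Xi$ and $\Xi\circ\Gamma$, and naturality of the unit and counit — is bookkeeping once this is in place. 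Since the excerpt only requires fullness and faithfulness of $\Gamma$ for the applications (as the footnote observes), one could alternatively give the lighter argument: faithfulness is immediate because $\Gamma(G,u)$ generates $G$ as a group; and fullness amounts to showing every MV-homomorphism $\Gamma(G,u)\to\Gamma(G',u')$ extends to the subgroups generated, which follows from the good-sequence description of those subgroups. I would present the full equivalence but flag this lighter route.
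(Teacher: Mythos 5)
Your sketch is precisely the standard good-sequences construction of the quasi-inverse functor, i.e.\ the argument in the sources the paper defers to (Mundici 1986; Cignoli--D'Ottaviano--Mundici 2000, Theorems 7.1.2 and 7.1.7) --- the paper itself gives no proof, treating the equivalence as background, and its footnote matches your closing remark that only fullness and faithfulness of $\Gamma$ are needed for the intended application. One small quibble: in $\Gamma(G,u)$ the good-sequence condition $a_i\oplus a_{i+1}=a_i$ reads $(a_i+a_{i+1})\wedge u=a_i$, which is not the same as $a_i+a_{i+1}\leq u$ (over a totally ordered group it forces $a_i=u$ or $a_{i+1}=0$, which is exactly the ``carrying'' phenomenon), but this parenthetical gloss does not affect the correctness of the bijection between good sequences over $\Gamma(G,u)$ and $G^{+}$.
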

%

\subsection{Ideal theory in MV-algebras}\label{s:ideal-theory}
As in the case of rings, quotients (or congruences) of MV-algebras can be described in terms of ideals. An \emph{ideal} of an MV-algebra $\A$ is a subset of $\A$ that contains $0$, is downwards closed in the lattice order of $\A$, and is closed under $\oplus$.  
The ideal $\langle S\rangle$ generated by a non-empty subset $S\subseteq \A$ can be described as follows (cf.\ \cite[Lemma~1.2.1]{cdm2000}):
\begin{align}\label{eq:generated-ideal}
\langle S\rangle=\{a\in \A\mid a\leq s_1\oplus\cdots\oplus s_n \ \text{for some} \ s_1,\ldots,s_n\in S\}.
\end{align}

Given an ideal $I\subseteq \A$, the corresponding congruence is
\begin{align*}
{\equiv_{I}}\coloneqq \{(a,b)\in \A\times \A \mid \d(a,b) \in I\},
\end{align*}
where $\d$ is the derived operation 
\begin{equation}\label{eq:chang-distance}
\d(a,b)\coloneqq (a\ominus b)\oplus(b\ominus a)
\end{equation}
known as \emph{Chang's distance}. When interpreted in the MV-algebra $\int$, Chang's distance coincides with the usual Euclidean distance. Just observe that, for all $x,y\in\int$,
\[
\d(x,y)=\min\{\max\{0,x-y\}+\max\{0,y-x\},1\}=\begin{cases} y-x \ \ \text{if $x\leq y$} \\ x-y \ \ \text{otherwise} \end{cases} \hspace{-0.8em}=|x-y|.
\]
We write $\A/I$ for the quotient algebra $\A/{\equiv_{I}}$. Conversely, the ideal associated with a congruence $\equiv$ on $\A$ is 
\[
I_{\equiv}\coloneqq \{a\in \A \mid a\equiv 0\}.
\] 
This yields a bijective correspondence between ideals of $\A$ and congruences on $\A$~\cite[Proposition~1.2.6]{cdm2000}. 

A \emph{maximal ideal} of an MV-algebra $A$ is an ideal $\m$ of $A$ that is proper (i.e., $\m\subsetneq \A$) and not strictly contained in any other proper ideal of $A$. The set of maximal ideals of $\A$ is denoted by $\Maxi(\A)$. As a consequence of Zorn's Lemma, an MV-algebra is non-trivial (i.e., it has two distinct elements) precisely when $\Maxi(A)\neq\emptyset$~\cite[Corollary~1.2.15]{cdm2000}.
An MV-algebra is \emph{simple} if it has no proper ideal distinct from $\{0\}$. Up to isomorphism, the simple MV-algebras are precisely the subalgebras of $\int$ \cite[Theorem~3.5.1]{cdm2000}. An MV-algebra $\A$ is \emph{semisimple} if it is a subdirect product of simple MV-algebras; equivalently, if $\bigcap{\Maxi(\A)}=\{0\}$ \cite[Proposition~3.6.1]{cdm2000}. 

Maximal ideals of an MV-algebra $\A$ can also be described in terms of MV-homomorphisms $\A\to\int$. Using the fact that the standard MV-algebra $\int$ is simple, it is not difficult to see that $h^{-1}(0)\in\Maxi(\A)$ for any MV-homomorphism $h\colon \A\to \int$ \cite[Proposition~1.2.16]{cdm2000}. This yields a function
\[
\k\colon \hom_{\MV}(\A,\int)\to\Maxi(A), \ \ \k(h)\coloneqq h^{-1}(0).
\]
For the converse direction, we appeal to H{\"o}lder's Theorem (for a modern proof for $\ell$-groups, see~\cite[\S 2.6]{BigardKeimel77}; for the MV-algebraic version, cf.~\cite[Theorem~3.5.1]{cdm2000}):
\begin{theorem}[H\"older's Theorem]\label{MVHolder}
For any MV-algebra $\A$ and maximal ideal $\m\in\Maxi(\A)$,  there exists a unique homomorphism of MV-algebras $h_{\m}\colon \A\to\int$ such that $\k(h_{\m})=\m$.
\end{theorem}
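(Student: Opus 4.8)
The plan is to reduce the statement to the classification of simple MV-algebras recalled above, together with the classical H\"older theorem for $\ell$-groups (see \cite[\S 2.6]{BigardKeimel77}), which is what forces the homomorphism to be unique. For \emph{existence}, I would first check that $\A/\m$ is simple: being maximal, $\m$ is proper, so $1\notin\m$ and $\A/\m$ is non-trivial; moreover, under the order-isomorphism between the ideals of $\A/\m$ and the ideals of $\A$ containing $\m$ (an instance of the ideal--congruence correspondence), maximality of $\m$ means that $\{0\}$ and $\A/\m$ are the only ideals of $\A/\m$. Since, up to isomorphism, the simple MV-algebras are exactly the subalgebras of $\int$, there is an injective MV-homomorphism $\iota\colon\A/\m\mono\int$; setting $h_\m\coloneqq\iota\circ\pi$ with $\pi\colon\A\epi\A/\m$ the quotient map, injectivity of $\iota$ gives $h_\m^{-1}(0)=\pi^{-1}(0)$, which is precisely the $0$-class of $\equiv_\m$, i.e.\ $\m$. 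Hence $\k(h_\m)=\m$.

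For \emph{uniqueness}, suppose $h,h'\colon\A\to\int$ satisfy $h^{-1}(0)=(h')^{-1}(0)=\m$. Since $h$ preserves Chang's distance and the latter is the Euclidean distance on $\int$ (so it vanishes only on the diagonal), $h(a)=h(b)$ holds iff $\d(a,b)\in\m$, i.e.\ iff $a\equiv_\m b$; thus $h$ factors through $\pi$ as $h=\bar{h}\circ\pi$ for an injective $\bar{h}\colon\A/\m\to\int$, and likewise $h'=\bar{h}'\circ\pi$ with $\bar{h}'$ injective. As $\pi$ is surjective, it suffices to show that the simple MV-algebra $\A/\m$ admits \emph{exactly one} MV-homomorphism into $\int$ (any such map is automatically injective, its preimage of $0$ being a proper ideal of a simple algebra).

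The crux is therefore the lemma: \emph{a simple MV-algebra $\B$ has at most one MV-homomorphism into $\int$.} I would prove it via Mundici's Equivalence (Theorem~\ref{t:gamma}): write $\B\cong\Gamma(G,u)$ for a unital Abelian $\ell$-group $(G,u)$, which is then $\ell$-simple, hence totally ordered and Archimedean; and, $\Gamma$ being full and faithful, an MV-homomorphism $\B\to\int=\Gamma(\R,1)$ corresponds to a unital $\ell$-homomorphism $\psi\colon(G,u)\to(\R,1)$, necessarily order-preserving and injective. Then, for $a\geq 0$ and integers $m\geq 0$, $n\geq 1$, one has $na\geq mu$ iff $n\psi(a)\geq m$, so $\psi(a)=\sup\{\,m/n : na\geq mu\,\}$ --- a value depending only on $(G,u)$ and $a$, not on $\psi$. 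Since every element of $G$ is a difference of two positive ones, $\psi$, and hence the original map, is uniquely determined. (One could also stay within MV-algebras: simplicity means every non-zero $x$ satisfies $\underbrace{x\oplus\cdots\oplus x}_{n}=1$ for some $n$, and the value of a homomorphism at $x$ is pinned down by which one-variable MV-terms $t$ satisfy $t(x)=1$, since in $\int$ these terms realise exactly the closed rational intervals $[q,1]$.) This lemma is the only non-formal step, and the single place where the Archimedean nature of simple MV-algebras --- equivalently, of $\ell$-simple $\ell$-groups --- is used; the rest is bookkeeping with the ideal--congruence correspondence and the classification of simple algebras.
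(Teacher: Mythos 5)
Your proof is correct. Note that the paper does not prove Theorem~\ref{MVHolder} at all: it is quoted as a classical result, with pointers to \cite[\S 2.6]{BigardKeimel77} and \cite[Theorem~3.5.1]{cdm2000}, and your argument is essentially the standard one behind those citations — pass to the simple quotient $\A/\m$ and embed it in $\int$ for existence, then get uniqueness from the Archimedean/H\"older argument pinning down the unique unital $\ell$-homomorphism into $(\R,1)$. The only step you leave implicit is that simplicity of $\B\cong\Gamma(G,u)$ forces $\ell$-simplicity of $(G,u)$, which uses the correspondence between MV-ideals of $\Gamma(G,u)$ and $\ell$-ideals of $G$ (not just full-faithfulness of $\Gamma$); this is standard and harmless, as is the sketched McNaughton-style alternative in your parenthetical.
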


\subsection{The Cignoli-Dubuc-Mundici adjunction}\label{s:CDM-adjunction}
We recall from~\cite{CDB2004} the Cignoli-Dubuc-Mundici adjunction between the category $\MV$ of MV-algebras and MV-homomorphisms and the dual of the category $\KH$ of compact Hausdorff spaces and continuous maps.

This dual adjunction is induced by the dualising object $\int$, regarded as either a compact Hausdorff space or an MV-algebra. 
Given an MV-algebra $\A$, equip the set  of homomorphisms $\hom_{\MV}(\A,\int)$ with the subspace topology induced by the product topology of $\int^\A$. The ensuing space is denoted by $\Max{\A}$ and referred to as the \emph{maximal spectrum} of $\A$. A routine argument, exploiting the fact that the MV-algebraic operations of $\int$ are continuous with respect to the Euclidean topology, shows that $\Max{\A}$ is closed in the product topology of $\int^\A$. Thus, $\Max{\A}$ is a compact Hausdorff space.

In the MV-algebraic literature (see e.g.~\cite{M11}), the space $\Max{\A}$ is usually studied by means of a different but equivalent representation. Let us endow the set of maximal ideals $\Maxi(A)$ with the \emph{hull-kernel} topology having as a basis of opens the sets of the form
\[
O_a\coloneqq \{\m\in\Maxi(\A)\mid a\notin\m\}
\]
for $a\in \A$. The space $\Maxi(A)$ is compact and Hausdorff, see \cite[Proposition~4.15]{M11}. In fact, the function $\k\colon \Max{\A}\to \Maxi(\A)$, which is a bijection by Theorem~\ref{MVHolder}, is a homeomorphism. For the continuity of $\k$ note that, for any $a\in\A$,
\[
\k^{-1}(O_a)=\{h\in\Max{\A}\mid h(a)\neq 0\}
\]
which is open in $\Max{\A}$ because it coincides with the preimage of $(0,1]$ under the (continuous) projection $\Max{\A}\to \int$, $h\mapsto h(a)$. As every continuous bijection between compact Hausdorff spaces is a homeomorphism, the spaces $\Max{\A}$ and $\Maxi(\A)$ are homeomorphic.

The next lemma provides a characterisation of the closed subsets of $\Max{\A}$ (for a proof, see~\cite[\S 4.4]{M11}). For any subset $S\subseteq \A$, define
\begin{align}\label{eq:vanishing-of-a-set}
\V(S)\coloneqq\{h\in \Max{\A}\mid h(a)=0 \ \text{for all} \ a\in S\}.
\end{align}
 If $a \in \A$, we write $\V(a)$ as a shorthand for $\V(\{a\})$. 
 \begin{lemma}\label{l:closed-sets-spectra}
 The following statements hold for any MV-algebra $\A$ and subsets $S,S'\subseteq \A$.
 \begin{enumerate}[label=\textup{(\alph*)}]
 \item\label{V-inc} $\V(S)\subseteq \V(S')$ if, and only if, $\langle S\rangle \supseteq \langle S'\rangle$.
 \item\label{closed-Max} The closed subsets of $\Max{\A}$ are precisely those of the form $\V(I)$ for some ideal $I\subseteq \A$.
 \end{enumerate}
 \end{lemma}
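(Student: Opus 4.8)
The plan is to derive both parts from the homeomorphism $\k\colon\Max{\A}\to\Maxi(\A)$ recalled in Section~\ref{s:CDM-adjunction} together with the hull-kernel description of the topology. The first step, common to both parts, is the identity $\V(S)=\V(\langle S\rangle)$ for every non-empty $S\subseteq\A$: since $h^{-1}(0)$ is an ideal of $\A$ for every $h\in\Max{\A}$, the condition ``$h$ vanishes on $S$'' means $S\subseteq h^{-1}(0)$, hence by \eqref{eq:generated-ideal} $\langle S\rangle\subseteq h^{-1}(0)$, i.e.\ $h\in\V(\langle S\rangle)$. Thus in part~\ref{V-inc} we may assume $S=I$ and $S'=J$ are ideals, and it is enough to show that $\V(I)\subseteq\V(J)$ if and only if $I\supseteq J$.

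For part~\ref{V-inc}: the implication $I\supseteq J\Rightarrow\V(I)\subseteq\V(J)$ is immediate, since a maximal ideal containing $I$ then contains $J$ (transporting along $\k$). For the converse I would argue by contraposition: given $b\in J\setminus I$, one needs a maximal ideal $\m$ of $\A$ with $I\subseteq\m$ but $b\notin\m$, for then $\k^{-1}(\m)\in\V(I)\setminus\V(J)$; equivalently, the image of $b$ in $\A/I$ must lie outside some maximal ideal of $\A/I$. Supplying such an ideal, via Zorn's Lemma and H\"older's Theorem (Theorem~\ref{MVHolder}), is the step I expect to be \textbf{the main obstacle} --- it is the only point requiring more than formal manipulation, and it depends on the radical of $\A/I$ being trivial.

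For part~\ref{closed-Max}: each $\V(a)=\{h\in\Max{\A}\mid h(a)=0\}$ is closed, being the preimage of the closed singleton $\{0\}\subseteq\int$ under the continuous evaluation map $h\mapsto h(a)$; hence $\V(I)=\bigcap_{a\in I}\V(a)$ is closed for every ideal $I$, which gives one inclusion. For the other, note that the basic opens $O_a=\{\m\mid a\notin\m\}$ of the hull-kernel topology are closed under finite unions, since $O_a\cup O_b=O_{a\vee b}$ (because $a\vee b\in\m$ iff $a\in\m$ and $b\in\m$, using $a,b\leq a\vee b\leq a\oplus b$); hence they form a basis, and transporting along $\k$ the complements $\Max{\A}\setminus\V(a)$ form a basis of $\Max{\A}$. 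Therefore every closed $F\subseteq\Max{\A}$ is an intersection $\bigcap_{a\in T}\V(a)=\V(T)$ for some $T\subseteq\A$, and by the first step $F=\V(\langle T\rangle)$, exhibiting $F$ in the required form $\V(I)$ with $I=\langle T\rangle$.
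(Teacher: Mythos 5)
Your part~\ref{closed-Max} is correct, and so are the preliminary reductions in part~\ref{V-inc}: the identity $\V(S)=\V(\langle S\rangle)$ and the implication from $\langle S\rangle\supseteq\langle S'\rangle$ to $\V(S)\subseteq\V(S')$ are exactly as they should be (the paper itself offers no proof of the lemma, deferring to \cite[\S 4.4]{M11}, so there is nothing in-paper to compare against). The problem is the step you yourself flag as ``the main obstacle''. It is not merely an obstacle left to the reader: it cannot be carried out, because the ``only if'' half of part~\ref{V-inc}, for arbitrary $S,S'$ in an arbitrary MV-algebra, is false. Your argument needs that for every ideal $I$ and every $b\notin I$ there is a maximal ideal containing $I$ and omitting $b$, i.e.\ that every quotient $\A/I$ is semisimple; as you correctly suspect, this is exactly triviality of the radical of $\A/I$, and it genuinely fails --- even when $\A$ itself is semisimple, since semisimplicity is not inherited by quotients.

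Concretely, take $\A=\C(\int,\int)$, $S=\{\f\}$ with $\f(c)=c$, and $S'=\{\g\}$ with $\g(c)=\sqrt{c}$. By Theorem~\ref{t:Max-C-adj}\ref{counit} every element of $\Max{\A}$ is an evaluation $\ev_c$, so $\V(S)=\{\ev_0\}=\V(S')$, and in particular $\V(S)\subseteq\V(S')$; yet by~\eqref{eq:generated-ideal} the containment $\langle S\rangle\supseteq\langle S'\rangle$ would require $\sqrt{c}\leq\min\{1,nc\}$ for some $n\in\N$ and all $c\in\int$, which fails for $0<c<1/n^2$. (For a non-semisimple example, take Chang's algebra with $S=\{0\}$ and $S'$ a nonzero element of the radical.) What is actually provable --- essentially by the Zorn/H\"older (Theorem~\ref{MVHolder}) argument you sketch, and by little more than formal manipulation --- is that $\V(S)\subseteq\V(S')$ if, and only if, $\langle S'\rangle\subseteq\bigcap\{\m\in\Maxi(\A)\mid S\subseteq\m\}$, i.e.\ $\langle S'\rangle$ lies in the hull-kernel closure of $\langle S\rangle$; this coincides with the stated condition precisely when $\langle S\rangle$ is an intersection of maximal ideals. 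So your proof of part~\ref{closed-Max} stands (note it only uses $\V(T)=\V(\langle T\rangle)$, not part~\ref{V-inc}), but the hard direction of part~\ref{V-inc} is a genuine gap in your write-up, and as literally stated it cannot be closed.
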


For every MV-homomorphism $k\colon \A\to \B$, the function 
\[
\Max{k}\colon \Max{\B}\to\Max{\A}, \ \ h\mapsto h\circ k
\] 
is continuous because $(\Max{k})^{-1}(\V(I))=\V(\langle k(I)\rangle)$ for every ideal $I$ of $\A$.
We get a functor
\begin{align*} 
\Max \colon \MV \to \KH^{\rm op}.
\end{align*}
Conversely, given a compact Hausdorff space $X$, let $\C(X,\int)$ be the MV-algebra of all continuous functions $X\to\int$ with the pointwise operations of the standard MV-algebra. If $\f\colon X \to Y$ is a morphism in $\KH$, it is easily seen that the induced map 
\begin{align*}
\C{\f}\colon \C(Y,\int)\to\C(X,\int), \ \ \g\mapsto \g\circ \f
\end{align*}
is a morphism in $\MV$. We can thus regard $\hom_{\KH}(-,\int)$ as a functor
\begin{align*}
\C \colon  \KH^{\rm op}\to  \MV. 
\end{align*}
For every compact Hausdorff space $X$ there is a continuous map
\begin{align*}
\ep_{X} \colon X \to \Max{\C(X,\int)}, \ \ x\mapsto (\C(X,\int)\xrightarrow{\ev_x}\int, \ \f\mapsto \f(x)).
\end{align*}
Moreover, for every MV-algebra $\A$, there is an MV-homomorphism 
\begin{align*}
\eta_{\A} \colon \A \to \C(\Max{\A},\int), \ \ a\mapsto (\Max{\A}\xrightarrow{\ev_a}\int, \ h\mapsto h(a)).
\end{align*}
To improve readability, we will write $\w{a}$ instead of $\ev_a$. Thus, for all $a\in\A$, 
\begin{align}\label{eq:Gelfand-transform}
\w{a}\colon \Max{\A}\to\int
\end{align}
is the continuous map defined by $\w{a}(h)\coloneqq h(a)$.

Denoting by $\id{{\mathscr C}}$ the identity functor on a category $\mathscr{C}$, it is not difficult to see that $\ep_X$ and $\eta_A$ yield natural transformations $\ep \colon \Max\circ\C \to \id{\KH^{\rm op}}$ and $\eta \colon \id{\MV} \to \C\circ\Max$, respectively. 
\begin{theorem}[Cignoli-Dubuc-Mundici Adjunction]\label{t:Max-C-adj}
The natural transformations $\eta$ and $\ep$ are the unit and counit, respectively, of an adjunction $\Max\dashv\C\colon \KH^{\rm op}\to \MV$. Furthermore, the following statements hold:
\begin{enumerate}[label=\textup{(\alph*)}]
\item\label{counit} For every $X$ in $\KH$, the component $\ep_X\colon X\to \Max{\C(X,\int)}$ of the counit is a homeomorphism, i.e.\ the functor $\C$ is full and faithful.
\item\label{unit-sem} For every $\A$ in $\MV$, the component $\eta_{\A}\colon \A\to\C(\Max{\A},\int)$ of the unit is injective if, and only if, $\A$ is semisimple.
\end{enumerate}
\end{theorem}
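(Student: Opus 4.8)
The plan is to obtain the adjunction by verifying the two triangle identities for the pair $(\eta,\ep)$ — each reducing to a one-line pointwise computation — and then to prove parts~\ref{counit} and~\ref{unit-sem} separately, \ref{counit} being the substantial point. \emph{Adjunction.} Granting that $\eta$ and $\ep$ are natural transformations (a routine check from the formulas for $\Max$ and $\C$ on morphisms), it suffices to show $\C(\ep_X)\circ\eta_{\C(X,\int)}=\id{\C(X,\int)}$ and, computed in $\KH$, $\Max(\eta_\A)\circ\ep_{\Max\A}=\id{\Max\A}$. For the first, $\eta_{\C(X,\int)}$ sends $\f$ to $\w\f$ and $\C(\ep_X)$ is precomposition with $\ep_X$, so the composite sends $\f$ to the function $x\mapsto\w\f(\ep_X(x))=\ev_x(\f)=\f(x)$, i.e.\ back to $\f$. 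For the second, $\ep_{\Max\A}$ sends $h\in\Max\A$ to the evaluation $\ev_h\colon\C(\Max\A,\int)\to\int$, $\g\mapsto\g(h)$, and $\Max(\eta_\A)$ sends $\ev_h$ to $\ev_h\circ\eta_\A\colon\A\to\int$, $a\mapsto\ev_h(\w a)=\w a(h)=h(a)$, i.e.\ back to $h$. Hence $\Max\dashv\C$ with unit $\eta$ and counit $\ep$.

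\emph{Part~\ref{counit}.} Both $X$ and $\Max{\C(X,\int)}$ are compact Hausdorff and $\ep_X$ is continuous (coordinatewise, $x\mapsto\ev_x(\f)=\f(x)$ is continuous), so since a continuous bijection between compact Hausdorff spaces is a homeomorphism it is enough to show $\ep_X$ is a bijection. Injectivity is Urysohn's Lemma: distinct points of a compact Hausdorff space are separated by some $\f\in\C(X,\int)$. For surjectivity, take an MV-homomorphism $h\colon\C(X,\int)\to\int$ and put $\m\coloneqq h^{-1}(0)\in\Maxi(\C(X,\int))$; I claim $\bigcap_{\f\in\m}\f^{-1}(0)\neq\emptyset$. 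If not, the opens $X\setminus\f^{-1}(0)$ for $\f\in\m$ cover $X$, so by compactness there are $\f_1,\dots,\f_n\in\m$ with $\g\coloneqq\f_1\vee\cdots\vee\f_n$ everywhere positive (and $\g\in\m$, since $\m$ is downward closed and $\oplus$-closed and $a\vee b\leq a\oplus b$); then $\g$ is bounded below by some $\delta>0$ by compactness, so a sufficiently long $\oplus$-sum of $\g$ equals $1_X$, forcing $1_X\in\m$ and contradicting properness. Choosing $x$ in $\bigcap_{\f\in\m}\f^{-1}(0)$ gives $\m\subseteq\ev_x^{-1}(0)$; since $\ev_x\colon\C(X,\int)\to\int$ is surjective its kernel is a proper ideal, and maximality of $\m$ yields $\m=\ev_x^{-1}(0)$, whereupon Theorem~\ref{MVHolder} forces $h=\ev_x=\ep_X(x)$. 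Finally, $\ep$ being a natural isomorphism is equivalent to the right adjoint $\C$ being full and faithful.

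\emph{Part~\ref{unit-sem}.} Since $\eta_\A(a)=\w a$ with $\w a(h)=h(a)$, we have $\eta_\A(a)=\eta_\A(b)$ iff $h(a)=h(b)$ for all $h\in\Max\A$, iff $h(\d(a,b))=0$ for all such $h$ (Chang's distance being interpreted as $|\cdot|$ in $\int$), iff — transporting along the homeomorphism $\k\colon\Max\A\to\Maxi(\A)$ and invoking Theorem~\ref{MVHolder} — $\d(a,b)\in\bigcap\Maxi(\A)$. Taking $b=0$ and using $\d(a,0)=a$, injectivity of $\eta_\A$ is equivalent to $\bigcap\Maxi(\A)=\{0\}$, which is the definition of semisimplicity recalled in Section~\ref{s:ideal-theory}; conversely, if $\bigcap\Maxi(\A)=\{0\}$ then $\eta_\A(a)=\eta_\A(b)$ gives $\d(a,b)=0$, hence $a=b$.

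The step I expect to be the main obstacle is the surjectivity of $\ep_X$: producing a common zero $x\in X$ of the maximal ideal $\m=h^{-1}(0)$ via the compactness argument above, and then recognising $h$ as evaluation at $x$. Everything else is formal manipulation of the adjunction together with the ideal-theoretic dictionary of Section~\ref{s:ideal-theory}.
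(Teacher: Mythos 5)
Your proof is correct. Note, though, that the paper itself does not prove this theorem: it only records the statement and refers to \cite{CDB2004} for the adjunction and to \cite[Theorem~4.16]{M11} for items (a) and (b). So what you have produced is a self-contained verification of material the paper outsources, and it follows what is essentially the standard route in the literature: the adjunction via naturality plus the two triangle identities (both of which you compute correctly, keeping track of the direction reversal in $\KH^{\op}$); part (a) reduced to showing $\ep_X$ is a continuous bijection between compact Hausdorff spaces, where the only substantive point is surjectivity, i.e.\ that every maximal ideal $\m$ of $\C(X,\int)$ has a common zero --- your compactness argument (a finite join $\g\in\m$ with no zero would be bounded below by some $\delta>0$, so a finite $\oplus$-iterate of $\g$ equals $1_X$, contradicting properness) is exactly the classical ``fixed maximal ideal'' argument behind \cite[Theorem~4.16]{M11}, and the identification $h=\ev_x$ via the uniqueness clause of Theorem~\ref{MVHolder} is the right way to finish; part (b) via $\eta_{\A}(a)=\eta_{\A}(b)\Leftrightarrow \d(a,b)\in\bigcap\Maxi(\A)$ is likewise the standard semisimplicity computation. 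Two small points you may wish to make explicit: the inequality $a\vee b\leq a\oplus b$ used to see $\g\in\m$ (immediate pointwise in $\C(X,\int)$), and the degenerate case of the trivial algebra in (b), where $\Maxi(\A)=\emptyset$ and the stated equivalence still holds under the usual convention for an empty intersection; neither affects correctness.
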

The adjunction $\Max\dashv\C$ was introduced in~\cite{CDB2004}.  
For a proof of items~\ref{counit} and~\ref{unit-sem}, see e.g.\ \cite[Theorem~4.16]{M11}.

\subsection{The variety $\Delta$}\label{subs:delta}
Any adjunction restricts to an equivalence between the full subcategories defined by the fixed objects, i.e.\ those objects for which the components of the unit and counit, respectively, are isomorphisms. In the case of the Cignoli-Dubuc-Mundici adjunction \[\Max\dashv\C\colon \KH^\op\to\MV,\] as the counit is a natural isomorphism by Theorem~\ref{t:Max-C-adj}\ref{counit}, we see that there exists a full subcategory of $\MV$ dually equivalent to $\KH$. In~\cite{MR2017}, the variety $\Delta$ of $\delta$-algebras was defined and shown to be isomorphic to a full subcategory of $\MV$ dually equivalent to $\KH$. However, the proof of the duality between $\Delta$ and $\KH$ given there relies on the Stone-Weierstrass Theorem. In this section we recall some facts about $\delta$-algebras that do not depend on the Stone-Weierstrass Theorem. 

Consider the algebraic language $\Lan_{\Delta}\coloneqq\{\delta,\oplus,\neg,0\}$ of type $(\omega,2,1,0)$.
As the operation $\delta$ takes as argument a countably infinite sequence of terms, we write $\vec{x},\vec{y}$ and $\vec{0}$ as shorthands for the sequences $x_1,x_2,\ldots$, $y_1,y_2,\ldots$, and $0,0,\ldots$, respectively. It will be convenient to introduce a derived unary operation $\fm$, to be thought of as multiplication by $\frac{1}{2}$:
\begin{align*}
\fm(x) \coloneqq  \delta(x,\vec{0}).
\end{align*}
Moreover, recall from equation~\eqref{eq:chang-distance} that $\d$ denotes Chang's distance. 
\begin{definition}\label{d:delta}
 A \emph{$\delta$-algebra} is an $\Lan_{\Delta}$-algebra $(\A,\delta,\oplus,\neg,0)$ such that $(\A,\oplus,\neg,0)$ is an MV-algebra and the following identities are satisfied for every $x,y\in \A$ and $\vec{x},\vec{y}\in \A^\omega$: 
\begin{multicols}{2}
\begin{enumerate}[label=(\roman*), itemsep=0em]
\item\label{Delta1} $\d\left(\delta(\vec{x}),\delta(x_1,\vec{0})\right)=\delta(0,x_2,x_3,\ldots)$
\item\label{Delta2} $\fm(\delta(\vec{x}))=\delta(\fm(x_1),\fm(x_2),\ldots)$
\item\label{Delta3} $\delta\left(x,x,\ldots\right)=x$
\item\label{Delta4} $\delta(0,\vec{x})=\fm(\delta(\vec{x}))$
\item\label{Delta5} $\delta(x_1,x_2,\ldots)\leq \delta(x_1\oplus y_1,x_2\oplus y_2,\ldots)$
\item\label{Delta6} $\fm(x\ominus y)=\fm(x)\ominus\fm(y)$
\end{enumerate}
\end{multicols}
\end{definition}
A homomorphism of $\delta$-algebras, or \emph{$\delta$-homomorphism} for short, is a homomorphism of the underlying MV-algebras that preserves the operation $\delta$. In fact, it will follow from Theorem~\ref{t:semisimple-and-full}\ref{Delta-full} below that all MV-homomorphisms between $\delta$-algebras are $\delta$-homomorphisms. We write $\Delta$ for the category (as well as for the variety) of $\delta$-algebras and $\delta$-homomorphisms.
\begin{remark}
Since the operation $\delta$ has infinite arity, $\Delta$ is not a variety of Birkhoff algebras. Thus, we rely on the theory of varieties of infinitary algebras as developed by S{\l}omi{\'n}ski in~\cite{Slominski59}, see also~\cite{Linton66}. In the following sections we will not need to exploit the axiomatisation in Definition~\ref{d:delta}. Instead, we will make use of the properties of $\Delta$ summarised in Theorem~\ref{t:semisimple-and-full}.
\end{remark}
The operation $\delta$, and its semantic interpretation which we now recall, were introduced by Isbell in~\cite{Isbell}.
In the unit interval $\int$, for every sequence $\vec{x}\in\int^\omega$, set
\[\delta(\vec{x})\coloneqq\sum_{i=1}^{\infty}{\frac{x_i}{2^i}}.\]
It is not difficult to see that the standard MV-algebra $\int$, equipped with this interpretation of $\delta$, is a $\delta$-algebra in which the unary operation $\fm$ coincides with multiplication by $\frac{1}{2}$.
More generally, for every compact Hausdorff space $X$, the MV-algebra $\C(X,\int)$ is a $\delta$-algebra if, for all $\vec{\g}\in \C(X,\int)^{\omega}$,  $\delta(\vec{\g})$ is defined as the uniformly convergent series 
\begin{equation}\label{eq:interp-delta-C(X)}
\delta(\vec{\g})\coloneqq \sum_{i=1}^{\infty}\frac{\g_i}{2^i}.
\end{equation}
Throughout this paper, whenever we regard $\C(X,\int)$ as a $\delta$-algebra, we assume that the interpretation of the operation $\delta$ is the one given above. In fact, there is no other structure of $\delta$-algebra expanding the pointwise MV-algebraic structure of $\C(X,\int)$ \cite[Corollary~6.4]{MR2017}. 

Note that the functor $\C\colon\KH^{\op}\to\MV$ factors through the forgetful functor $\Delta\to\MV$. Hence, the Cignoli-Dubuc-Mundici adjunction restricts to an adjunction 
\[
\Max\dashv\C\colon\KH^{\op}\to\Delta.
\]
\begin{theorem}\label{t:semisimple-and-full}
The following statements hold:
\begin{enumerate}[label=\textup{(\alph*)}]
\item\label{Delta-sem} The underlying MV-algebra of any $\delta$-algebra is semisimple.
\item\label{Delta-full} The forgetful functor $\Delta\to\MV$ is full.
\item\label{Delta-unit} For every $\A\in\Delta$, the map $\eta_{\A}\colon \A\to\C(\Max{\A},\int)$ is an injective $\delta$-homomorphism.
\end{enumerate}
\end{theorem}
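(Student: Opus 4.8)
The plan is to prove~\ref{Delta-sem} first; parts~\ref{Delta-unit} and~\ref{Delta-full} will then follow from it together with the Cignoli--Dubuc--Mundici adjunction with comparatively little extra work.

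For~\ref{Delta-sem} I would show that the radical $\rad(\A)$ is trivial. Recall (H\"older's Theorem~\ref{MVHolder}) that $a\in\rad(\A)$ iff $na\leq\neg a$ for every $n\geq 1$, where $na$ abbreviates $a\oplus\dots\oplus a$; in particular $\fm(a)\leq a\leq\neg a$ whenever $a\in\rad(\A)$. The one identity I would extract from the axioms by hand is
\[
\fm(x)\oplus\fm(x)=x ,
\]
valid in every $\delta$-algebra: $\fm(x)=\delta(x,\vec 0)\leq\delta(x,x,\dots)=x$ by~\ref{Delta5} and~\ref{Delta3}, while $\d\bigl(x,\fm(x)\bigr)=\delta(0,x,x,\dots)=\fm(x)$ by~\ref{Delta1}, \ref{Delta4} and~\ref{Delta3}, and since $\fm(x)\leq x$ these combine to $x=(x\ominus\fm(x))\oplus\fm(x)=\fm(x)\oplus\fm(x)$. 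Specialising to $x=2^{k}a$ with $a\in\rad(\A)$ and using $2^{k+1}a\leq\neg a$ (so that the doubling does not overflow) then gives $\fm(2^{k}a)=2^{k-1}a$ for all $k\geq 1$. Now suppose $0<a\in\rad(\A)$ and feed the exponentially growing sequence into $\delta$:
\[
b\coloneqq\delta(a,2a,4a,8a,\dots)\in\A .
\]
Monotonicity~\ref{Delta5} (with~\ref{Delta3}) gives $\fm(a)=\delta(a,\vec 0)\leq b$ and $b\leq\delta(\neg a,\neg a,\dots)=\neg a$. Peeling off the first coordinate via~\ref{Delta1}, then~\ref{Delta4}, then~\ref{Delta2}, and finally using $\fm(2^{k}a)=2^{k-1}a$, yields
\[
\d\bigl(b,\fm(a)\bigr)=\delta(0,2a,4a,\dots)=\fm\bigl(\delta(2a,4a,\dots)\bigr)=\delta\bigl(\fm(2a),\fm(4a),\dots\bigr)=\delta(a,2a,4a,\dots)=b .
\]
Since $\fm(a)\leq b$, this rearranges to the relation $b=b\oplus\fm(a)$. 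But $\fm(a)\leq\neg a\leq\neg b$, so $b$ and $\fm(a)$ are orthogonal ($b\odot\fm(a)=0$) and $\oplus$ is therefore cancellative on them --- equivalently, reading the identity inside the unital $\ell$-group $(G,u)$ with $\Gamma(G,u)\cong\A$ provided by Theorem~\ref{t:gamma}, the sum $b+\fm(a)$ does not overflow $u$ --- so $\fm(a)=0$, whence $a=\fm(a)\oplus\fm(a)=0$, a contradiction. Hence $\A$ is semisimple.

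Granting~\ref{Delta-sem}, injectivity of $\eta_{\A}$ in~\ref{Delta-unit} is immediate from Theorem~\ref{t:Max-C-adj}\ref{unit-sem}. For the remaining content of~\ref{Delta-unit} and~\ref{Delta-full} --- that $\eta_{\A}$, and more generally any MV-homomorphism $k\colon\A\to\B$ between $\delta$-algebras, preserves $\delta$ --- I would use that $\delta$ in any $\C(X,\int)$ is computed pointwise as a uniformly convergent series and that the homomorphisms $\B\to\int$ are jointly injective (by~\ref{Delta-sem} and Theorem~\ref{t:Max-C-adj}\ref{unit-sem}); this reduces both claims to the single assertion that \emph{every MV-homomorphism $h\colon\A\to\int$ out of a $\delta$-algebra preserves $\delta$}. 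That in turn follows by factoring $h$ through the quotient $\A/\k(h)$, which is a \emph{simple} $\delta$-algebra (congruence quotients remain in $\Delta$, and $\k(h)$ is a maximal ideal), once one has the auxiliary lemma that a simple $\delta$-algebra is isomorphic \emph{as a $\delta$-algebra} to the standard $\delta$-algebra $\int$: the axioms force it to contain every $\delta$-sum of $0$'s and $1$'s and hence to exhaust $\int$, and~\cite[Corollary~6.4]{MR2017} (for a one-point space) identifies the $\delta$-operation with the standard series.

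I expect the main obstacle to be~\ref{Delta-sem}: the substance lies in choosing the rescaling sequence $(a,2a,4a,\dots)$ so that $\delta$ converts an infinitesimal into the relation $b=b\oplus\fm(a)$, and in the (routine but fiddly) MV-algebraic bookkeeping that keeps every intermediate term below $\neg a$, so that this relation genuinely forces $\fm(a)=0$. The simple-$\delta$-algebra lemma behind~\ref{Delta-unit} and~\ref{Delta-full} is a smaller obstacle of the same flavour --- one has to squeeze a completeness-type property out of the infinitary axioms.
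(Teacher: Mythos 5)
Your part~\ref{Delta-sem} goes further than the paper does: the paper simply quotes Theorems~5.5 and~6.3 of \cite{MR2017} for items~\ref{Delta-sem} and~\ref{Delta-full} and derives~\ref{Delta-unit} from them via Theorem~\ref{t:Max-C-adj}\ref{unit-sem}, whereas you reprove semisimplicity from the axioms. That argument looks correct: the identity $\fm(x)\oplus\fm(x)=x$, the computation $\d(b,\fm(a))=b$ for $b=\delta(a,2a,4a,\ldots)$, and cancellation in the enveloping unital $\ell$-group (Theorem~\ref{t:gamma}) do force $\fm(a)=0$, hence $a=0$, for any infinitesimal $a$. Two details deserve tightening: the description of $\rad(\A)$ as the set of elements with $na\leq\neg a$ for all $n$ is \cite[Proposition~3.6.4]{cdm2000}, not H\"older's theorem; and $\fm(2^{k}a)=2^{k-1}a$ needs a word on uniqueness of halving --- since $\rad(\A)$ is an ideal, every multiple $ma$ satisfies $ma\leq\neg(ma)$, so both $\fm(2^{k}a)\oplus\fm(2^{k}a)=2^{k}a$ and $2^{k-1}a\oplus 2^{k-1}a=2^{k}a$ are non-truncated sums in the $\ell$-group, and torsion-freeness gives the equality.

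The genuine gap is in~\ref{Delta-full} (and hence~\ref{Delta-unit}). Reducing to the claim that every MV-homomorphism $h\colon\A\to\int$ preserves $\delta$ is fine, but your proof of that claim starts by factoring $h$ through ``the quotient $\A/\k(h)$, which is a simple $\delta$-algebra (congruence quotients remain in $\Delta$, and $\k(h)$ is a maximal ideal)''. This tacitly assumes that the MV-congruence $\ker h$ is compatible with the infinitary operation $\delta$, which is not automatic and is essentially the statement being proved. MV-congruences on $\delta$-algebras need not respect $\delta$: in $\C(X,\int)$ with $X=[0,1]$, take the ideal of all functions vanishing on some neighbourhood of $0$ and functions $\g_i$ vanishing exactly on $[0,\tfrac{1}{i}]$; each $\g_i$ is congruent to $0$, but $\delta(\vec{\g})$ is strictly positive off $0$ and hence is not. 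For the maximal ideal $\k(h)$, compatibility of $\ker h$ with $\delta$ together with your simple-algebra lemma immediately yields that $h$ preserves $\delta$, while conversely $h$ preserving $\delta$ is exactly what makes $\ker h$ a $\delta$-congruence; so the factorisation step begs the question, and neither maximality nor the axioms have yet been used to justify it. A non-circular route along your own lines: from $\d(x,\fm(x))=\fm(x)$ at $x=1$ deduce $\neg\fm(1)=\fm(1)$, hence $h(\fm(x))=\tfrac{1}{2}h(x)$ for every MV-homomorphism $h\colon\A\to\int$ (using $\fm(x)\oplus\fm(x)=x$ and $\fm(x)\leq\fm(1)$); compute $h$ on the truncations $\delta(a_1,\ldots,a_n,\vec{0})$ by peeling coordinates with axiom~\ref{Delta1} of Definition~\ref{d:delta}; and use $\d\big(\delta(\vec{a}),\delta(a_1,\ldots,a_n,\vec{0})\big)\leq\fm^{n}(1)$, whose image under $h$ is $2^{-n}$, to conclude $h(\delta(\vec{a}))=\sum_{i}h(a_i)/2^{i}$. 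That is, in substance, the content of \cite[Theorem~6.3]{MR2017}, which the paper cites rather than reproves; as it stands your proposal does not supply it.
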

\begin{proof}
The first two items are Theorems~5.5 and~6.3, respectively, in~\cite{MR2017}. The third item follows at once from the first two, along with Theorem~\ref{t:Max-C-adj}\ref{unit-sem}.
\end{proof}
\begin{remark}\label{rem:epis-to-monos}
Let $\Set$ be the category of sets and functions. In view of Theorem~\ref{t:semisimple-and-full}\ref{Delta-full}, the functor $\hom_{\Delta}(-,\int)\colon \Delta^\op\to\Set$ coincides with the composition of $\Max\colon \Delta^\op\to \KH$ with the forgetful functor $\KH\to\Set$. As the functor $\hom_{\Delta}(-,\int)$ is representable, hence it preserves limits, and the forgetful functor $\KH\to\Set$ reflects limits, we see that $\Max\colon \Delta^\op\to \KH$ preserves all limits. For instance, $\Max$ sends epis in $\Delta$ to monos in $\KH$. In particular, if $h\colon \A\epi \B$ is a surjective homomorphism in $\Delta$, then the continuous map $\Max{h}$ identifies $\Max{\B}$ with a closed subspace of $\Max{\A}$. 
\end{remark}

\section{The logic $\Log$}\label{s:logic-delta}
The purpose of this section is to introduce the infinitary equational logic $\Log$ and prove its main properties: the compactness and local deduction theorems, and some of their consequences. Every Birkhoff variety $\mathcal{V}$ of finitary algebras comes with an associated logic, namely the equational consequence $\vDash_{\mathcal{V}}$. See, e.g., \cite[\S 2]{MMT2014}. This concept makes sense also for varieties of infinitary algebras in the sense of S{\l}omi{\'n}ski~\cite{Slominski59}. We shall spell this out in the case of the variety $\Delta$. 

To improve readability, write $\Lan$ instead of $\Lan_{\Delta}$ for the language of $\delta$-algebras. Given a (possibly infinite) set of propositional variables $\x$, let $\T(\x)$ denote the \emph{algebra of $\Lan$-terms in the variables~$\x$}. In other words, $\T(\x)$ is the \emph{absolutely free $\Lan$-algebra over $\x$}. As in the case of varieties of Birkhoff algebras, the free $\delta$-algebra over $\x$, denoted by $\F(\x)$, can be constructed in a canonical way as a quotient 
\[
\uprho\colon \T(\x)\epi \F(\x).
\]
(This is true for any variety of infinitary algebras, provided there is a cardinal $\kappa$ such that all operations have arity smaller than $\kappa$, cf.~\cite[Chapter~III]{Slominski59}.) If $\A$ is an $\Lan$-algebra and $f\colon \x\to \A$ is a function, also called an \emph{assignment} of the variables $\x$ in $\A$, we denote by  
\[
\li{f}\colon \T(\x)\to \A
\] 
the unique $\Lan$-homomorphism extending $f$. 

An \emph{$\Lan$-equation} in the variables $\x$ is a pair $(s,t)\in \T(\x)\times \T(\x)$ of $\Lan$-terms; we shall use the more suggestive notation $s\approx t$ for the equation $(s,t)$. 
Arbitrary $\Lan$-equations will be denoted by $\sigma$, and sets of $\Lan$-equations by $\Si$, $\Ga$, or $\Pi$. To emphasize that the variables of an $\Lan$-term, $\Lan$-equation, or set of $\Lan$-equations, are contained in $\x$, we write $s(\x), \sigma(\x)$, or $\Si(\x)$. Further, for convenience of notation, we drop reference to the language $\Lan$ when speaking of $\Lan$-terms, $\Lan$-equations, and $\Lan$-homomorphisms.
Given a set of equations $\Si(\x)$, an $\Lan$-algebra $\A$ and an assignment $f\colon \x\to \A$, we define
\[
\A,f\models \Si \ \text{ if, and only if, } \ \Si\subseteq \ker{\li{f}}
\] 
where $\ker{\li{f}}\coloneqq \{(s,t)\in \T(\x)\times \T(\x)\mid \li{f}(s)=\li{f}(t)\}$ is the kernel of $\li{f}$.
If $\A,f\models \Si$, we say that $\Si$ is \emph{satisfied} in $\A$ with respect to the variable assignment $f$.
For any set of equations $\Si(\x)\cup\{\sigma(\x)\}$, define $\Si\Log\sigma$ if and only if for every $\A\in\Delta$ and assignment $f\colon \x\to \A$,
\[
A,f\models\Si \ \ \Longrightarrow \ \ \A,f\models \sigma.
\]
Finally, given a set of equations $\Si(\x)\cup\Ga(\x)$, the equational consequence relation $\Log$ is defined by $\Si\Log\Ga$ if, and only if, $\Si\Log \sigma$ for every $\sigma\in\Ga$. 

As with groups, every equation $s\approx t$ in the language of $\delta$-algebras is equivalent to one of the form $s'\approx 0$. Just observe that, for any $\A\in\Delta$ and all elements $a,b\in \A$,
\[
a=b \ \Longleftrightarrow \ \d(a,b)=0
\] 
where $\d$ is Chang's distance. By applying the involution $\neg$, we can also transform the equation $s\approx  t$ into an equivalent one of the form $s''\approx 1$.
Furthermore, since every member of $\Delta$ embeds into a power of $\int$ by Theorem~\ref{t:semisimple-and-full}\ref{Delta-unit}, we see that
\begin{equation}\label{eq:int-generates}
\Si\Log\sigma \ \Longleftrightarrow \ \text{for every assignment $f\colon \x\to \int$}, \ \int,f\models\Si \ \ \text{entails} \ \ \int,f\models \sigma.
\end{equation}
In other words, the logic $\Log$ is complete with respect to valuations in $\int$ (algebraically: $\int$ generates the variety $\Delta$).

An important observation is that the logic $\Log$ is compact because the corresponding maximal spectra are topologically compact (cf.~Lemma~\ref{l:compactness-logic-delta} below). We refer to an arbitrary set of equations $\Si(\x)$ as a \emph{theory}, and say that $\Si(\x)$ is \emph{satisfiable} if there exist a non-trivial algebra $\A\in\Delta$ (i.e., a $\delta$-algebra with at least two distinct elements) and an assignment $f\colon \x\to \A$ such that $\A,f\models \Si$. Recalling from Section~\ref{s:ideal-theory} that $\A$ is non-trivial if, and only if, it admits a maximal ideal, we see that $\Si$ is satisfiable precisely when there exists an assignment $f\colon \x\to\int$ satisfying $\int,f\models \Si$. Here and throughout this paper, by an \emph{ideal} of a $\delta$-algebra we understand an ideal of the underlying MV-algebra.
\begin{lemma}[Compactness]\label{l:compactness-logic-delta}
Let $\Si(\x)$ be any theory. Then $\Si(\x)$ is satisfiable if, and only if, all its finite subsets are satisfiable.
\end{lemma}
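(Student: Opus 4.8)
The left-to-right implication is immediate: if a non-trivial $\A\in\Delta$ and an assignment $f$ witness $\A,f\models\Si$, then the same $\A$ and $f$ witness $\A,f\models\Si_0$ for every subset $\Si_0\subseteq\Si$. So assume every finite subset of $\Si$ is satisfiable; the task is to produce a witness for $\Si$ itself. By the discussion preceding the statement it suffices to find a single assignment $f\colon\x\to\int$ with $\int,f\models\Si$, and, likewise, each finite $\Si_0\subseteq\Si$ is already satisfied by some $f\colon\x\to\int$.

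The plan is a standard topological compactness argument on the space $\int^{\x}$ of all functions $\x\to\int$, endowed with the product topology; by Tychonoff's theorem this space is compact. The first step is to observe that for every term $s(\x)$ the evaluation map $\int^{\x}\to\int$, $f\mapsto\li f(s)$, is continuous. This is proved by induction on the (well-founded) structure of $s$: the variable projections are continuous, the operations $\oplus,\neg$ and the constant $0$ are continuous on $\int$, and the infinitary operation $\delta\colon\int^{\omega}\to\int$, $\vec a\mapsto\sum_{i\geq1}a_i/2^i$, is continuous for the product topology on $\int^{\omega}$, being the uniform limit of the continuous finite partial sums (each of which depends on only finitely many coordinates). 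Consequently, for each equation $\sigma=(s\approx t)$, the set
\[
M(\sigma)\coloneqq\{f\in\int^{\x}\mid\li f(s)=\li f(t)\}
\]
is closed in $\int^{\x}$: it is the preimage of the closed diagonal of $\int\times\int$ under the continuous map $f\mapsto(\li f(s),\li f(t))$. (Equivalently, since $s\approx t$ is equivalent to $\d(s,t)\approx0$, the set $M(\sigma)$ is the zero-set of the continuous map $f\mapsto\li f(\d(s,t))$.)

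Now the family $\{M(\sigma)\mid\sigma\in\Si\}$ of closed subsets of $\int^{\x}$ has the finite intersection property: for any finite $\Si_0\subseteq\Si$ one has $\bigcap_{\sigma\in\Si_0}M(\sigma)=\{f\mid\int,f\models\Si_0\}$, which is non-empty because $\Si_0$ is satisfiable. By compactness of $\int^{\x}$ there is therefore some $f\in\bigcap_{\sigma\in\Si}M(\sigma)$; such an $f$ satisfies $\int,f\models\Si$, and since $\int$ is non-trivial this shows that $\Si$ is satisfiable.

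The only point that demands care is the continuity of term evaluation, i.e.\ the induction on terms in the second paragraph; its essential new ingredient compared with the finitary case is the continuity of $\delta$ on $\int^{\omega}$, after which the argument is the routine one. I do not expect any serious obstacle beyond bookkeeping the infinitary induction.
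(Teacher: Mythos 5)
Your proof is correct, but it takes a genuinely different route from the paper's. The paper reduces every equation to the form $s_i\approx 0$, passes to the free algebra via $\quot\colon\T(\x)\epi\F(\x)$, and runs the finite-intersection-property argument on the vanishing sets $\V(\quot(s_i))$ inside the maximal spectrum $\Max{\F(\x)}$, whose compactness was already established in the preliminaries; a point of the intersection is a homomorphism $\F(\x)\to\int$, which is then restricted to the variables. You instead work directly ``upstairs'' on the Tychonoff cube $\int^{\x}$ of assignments, proving by induction on terms that evaluation $f\mapsto\li f(s)$ is continuous — the one genuinely new ingredient being the continuity of the infinitary operation $\delta$ on $\int^{\omega}$, which you correctly obtain as a uniform limit of finite partial sums depending on finitely many coordinates (and the induction is legitimate, since the subterm relation on the absolutely free $\Lan_\Delta$-algebra is well-founded even with a countably-ary operation). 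What your approach buys is self-containedness: it avoids the free algebra, ideals, and the spectral apparatus entirely, at the cost of the term-evaluation induction. What the paper's approach buys is brevity and coherence with the rest of the development: it reuses Lemma~\ref{l:closed-sets-spectra}-style machinery and the compactness of $\Max{\A}$, and in light of Proposition~\ref{p:spectrum-of-free} (proved afterwards), the two arguments are really about the same compact space, both resting ultimately on Tychonoff's theorem. Your appeal to the discussion preceding the lemma, to replace satisfiability in an arbitrary non-trivial $\delta$-algebra by satisfiability under an assignment into $\int$, is exactly what the paper licenses, so no gap there.
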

\begin{proof}
For the non-trivial direction, suppose that every finite subset of $\Si(\x)$ is satisfiable. We can assume without loss of generality that 
\[
\Si(\x)=\{s_i\approx 0\mid i\in I\}
\] 
for a set of terms $\{s_i(\x)\mid i\in I\}$. With the notation in~\eqref{eq:vanishing-of-a-set}, each term $s_i$ yields a closed subset $\V(\quot(s_i))$ of $\Max{\F(\x)}$, where $\quot\colon \T(\x)\epi\F(\x)$ is the canonical quotient. We claim that
\[
\{\V(\quot(s_i))\mid i\in I\}
\] 
has the finite intersection property. For any finite subset $I_0\subseteq I$, the theory 
$
\Si_0\coloneqq\{s_i\approx 0\mid i\in I_0\} 
$
is satisfiable, hence there exists an assignment $f\colon\x\to\int$ satisfying $\int,f\models \Si_0$. Denoting by $h\colon \F(\x)\to \int$ the unique homomorphism extending $f$, we have
\[
h(\quot(s_i))=\li{f}(s_i)=0 \ \text{ for every $i\in I_0$},
\]
i.e., $h\in \bigcap{\{\V(\quot(s_i))\mid i\in I_0\}}$. It follows that $\{\V(\quot(s_i))\mid i\in I\}$ has the finite intersection property and so, as the space $\Max{\F(\x)}$ is compact, $\bigcap{\{\V(\quot(s_i))\mid i\in I\}}\neq\emptyset$. Therefore, there exists a homomorphism $k\colon \F(\x)\to \int$ such that $k(\quot(s_i))=0$ for every $i\in I$. We get $\int,k\circ \quot_{\restriction\x}\models \Si$, showing that the theory $\Si(\x)$ is satisfiable.
\end{proof}

\begin{remark} 
The compactness of $\Log$ can be equivalently stated as the fact that $\Si\Log 0\approx 1$ entails $\Si_0\Log 0\approx 1$ for some finite subset $\Si_0\subseteq \Si$.
However, it is not the case that, for any equation $\sigma$, if $\Si\Log\sigma$ then $\Si_0\Log\sigma$ for some finite subset $\Si_0\subseteq \Si$. For instance, let $\Si\coloneqq\{x_i\approx 0\mid i\in \omega\}$ and $\sigma\coloneqq\{\delta(\vec{x})\approx 0\}$, where $\vec{x}=x_1,x_2,x_3,\ldots$. By axiom~\ref{Delta3} in Definition~\ref{d:delta}, we have $\delta(\vec{0})=0$, hence $\Si\Log \sigma$. However, it is not difficult to see that $\Si_0\not\Log \sigma$ for each finite subset $\Si_0\subseteq \Si$. To see this, let $\Si_0$ be a finite subset of $\Si$ and pick $j\in \omega$ such that the equation $x_j\approx 0$ does not belong to $\Si_0$. If $\x\coloneqq\{x_i\mid i\in\omega\}$, the assignment $f\colon \x\to \int$ that is $1$ on $x_j$ and $0$ elsewhere satisfies $\int, f\models \Si_0$ and $\int, f\not\models \sigma$.
\end{remark}

Next, we prove a local deduction theorem for $\Log$, analogous to the one for {\L}ukasiewicz infinite-valued logic. To this end recall that, in any MV-algebra, the operation $\odot$ admits an upper adjoint $\to$, i.e.\ $a\odot b\leq c \Leftrightarrow a\leq b\to c$ \cite[p.~86]{cdm2000}. Explicitly, $b\to c=\neg b\oplus c$. For every $k\in \N$ and term $s$, write 
\[
ks\coloneqq\underbrace{s\oplus \cdots\oplus s}_{\text{$k$ times}} \ \text{ and } \ s^k\coloneqq\underbrace{s\odot \cdots\odot s}_{\text{$k$ times}}.
\]
An elementary computation shows that $k(\neg s)=\neg(s^k)$.
\begin{lemma}[Local Deduction Theorem]\label{l:deduction}
Let $\Si(\x)$ be a theory, and $s(\x),t(\x)$ two terms such that $\Si\cup\{s\approx 1\}\Log t\approx 1$. Then there is a $k\in\N$ such that $\Si\Log (s^k\to t)\approx 1$.
\end{lemma}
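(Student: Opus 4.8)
The plan is to run the argument behind the deduction theorem for {\L}ukasiewicz logic: transport the hypothesis into a containment of ideals in the free $\delta$-algebra via its maximal spectrum, read off a numerical bound from the description of generated ideals, and transport the conclusion back using completeness with respect to $\int$.

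First I would normalise $\Si$. Since $a=b$ is equivalent to $\d(a,b)=0$ in every member of $\Delta$, replacing $\Si$ by $\{\d(s',t')\approx 0\mid(s'\approx t')\in\Si\}$ changes none of the consequences at stake, so assume $\Si=\{s_i\approx 0\mid i\in I\}$. Let $\x$ collect the variables occurring, let $\quot\colon\T(\x)\epi\F(\x)$ be the canonical quotient, and set $R\coloneqq\quot(s)$, $T\coloneqq\quot(t)$ and $R_i\coloneqq\quot(s_i)$ in $\F(\x)$. By Theorem~\ref{t:semisimple-and-full}\ref{Delta-full} every MV-homomorphism $\F(\x)\to\int$ is a $\delta$-homomorphism, hence arises from a unique assignment $\x\to\int$; this identifies the underlying set of $\Max\F(\x)$ with the assignments of $\x$ in $\int$, and, combined with the completeness of $\Log$ with respect to $\int$ (equation~\eqref{eq:int-generates}), it lets me restate the hypothesis $\Si\cup\{s\approx 1\}\Log t\approx 1$ as the inclusion of closed subsets
\[
\V\bigl(\{R_i\mid i\in I\}\cup\{\neg R\}\bigr)\ \subseteq\ \V\bigl(\{\neg T\}\bigr)
\]
of $\Max\F(\x)$: a homomorphism $h$ belonging to the left-hand set is an assignment satisfying $\Si$ and making $s$ evaluate to $1$, whence, by hypothesis, it makes $t$ evaluate to $1$, i.e.\ $h(\neg T)=0$.

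The crux is then Lemma~\ref{l:closed-sets-spectra}\ref{V-inc}, which converts this inclusion into the ideal containment $\langle\{R_i\mid i\in I\}\cup\{\neg R\}\rangle\supseteq\langle\{\neg T\}\rangle$; in particular $\neg T$ lies in the ideal of $\F(\x)$ generated by the $R_i$ together with $\neg R$. Unravelling this through the description~\eqref{eq:generated-ideal} of generated ideals yields a $k\in\N$ and finitely many indices $i_1,\dots,i_m\in I$ with
\[
\neg T\ \leq\ R_{i_1}\oplus\cdots\oplus R_{i_m}\oplus k(\neg R)\ =\ R_{i_1}\oplus\cdots\oplus R_{i_m}\oplus\neg\bigl(R^{k}\bigr)
\]
in $\F(\x)$, using the identity $k(\neg R)=\neg(R^{k})$ recalled before the statement. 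This is precisely where finiteness is won: although $\Si$, hence the generating set, may be infinite, formula~\eqref{eq:generated-ideal} guarantees that only a single bound $k$ and a single finite subset $\{i_1,\dots,i_m\}$ of $\Si$ are involved. I expect Lemma~\ref{l:closed-sets-spectra} to be the main obstacle, being the step that turns a soft topological inclusion into this concrete algebraic inequality.

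Finally I would evaluate the displayed inequality along the homomorphism $\F(\x)\to\int$ induced by an arbitrary assignment $f\colon\x\to\int$ with $\int,f\models\Si$. Then $\li f(s_{i_1})=\cdots=\li f(s_{i_m})=0$, so the inequality becomes $\li f(\neg t)\leq\li f(\neg(s^{k}))$, i.e.\ $\li f(s^{k})\leq\li f(t)$; since $a\leq b$ implies $\neg a\oplus b=1$ in any MV-algebra, this gives $\li f(s^{k}\to t)=1$. Hence $\int,f\models(s^{k}\to t)\approx 1$ for every model $f$ of $\Si$ in $\int$, and one more appeal to~\eqref{eq:int-generates} delivers $\Si\Log(s^{k}\to t)\approx 1$.
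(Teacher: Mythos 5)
Your proof is correct and follows essentially the same route as the paper's: normalise $\Si$ to equations of the form $u\approx 0$, translate the hypothesis into the inclusion $\V(\{\quot(u_i)\}\cup\{\neg\quot(s)\})\subseteq\V(\neg\quot(t))$ in $\Max{\F(\x)}$, and extract the exponent $k$ and the finite subfamily via Lemma~\ref{l:closed-sets-spectra}\ref{V-inc} together with the description~\eqref{eq:generated-ideal} of generated ideals. The only, immaterial, difference is the final verification, which you carry out by evaluating in $\int$ and appealing to~\eqref{eq:int-generates}, whereas the paper argues directly for an arbitrary $\A\in\Delta$ by noting that $\quot(s)^k\odot\neg\quot(t)$ lies in the ideal $g^{-1}(0)$.
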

\begin{proof}
The same proof as for {\L}ukasiewicz logic (cf.~\cite[Propositions~4.2.9 and~4.6.4]{cdm2000}), hinging on the ideal theory in MV-algebras, applies here mutatis mutandis. 
We spell out the details for the sake of completeness. Assume without loss of generality that 
\[
\Si(\x)=\{u_i\approx 0\mid i\in I\}
\] 
for some set of terms $\{u_i(\x)\mid i\in I\}$ and let $h\colon \F(\x)\to\int$ be any homomorphism such that $h(\quot(s))=0$ and $h(\quot(u_i))=1$ for all $i\in I$.  
Then, 
\[\int,h\circ \quot_{\restriction \x}\models \Si\cup\{s\approx 1\}
\] 
and, because $\Si\cup\{s\approx 1\}\Log t\approx 1$, we see that $\int,h\circ \quot_{\restriction \x}\models t\approx 1$. In other words, $h(\quot(t))=1$. Applying the involution $\neg$, we obtain
\[
\V(\{\quot(u_i)\mid i\in I\}\cup\{\neg \quot(s)\})\subseteq \V(\neg \quot(t)).
\]
In view of Lemma~\ref{l:closed-sets-spectra}\ref{V-inc}, $\neg \quot(t)$ belongs to the ideal of $\F(\x)$ generated by ${\{\quot(u_i)\mid i\in I\}}\cup\{\neg \quot(s)\}$. By equation~\eqref{eq:generated-ideal}, there exist $\phi_1,\ldots,\phi_n\in \{\quot(u_i)\mid i\in I\}$ and a $k\in \N$ such that
\begin{align*}
\neg \quot(t)&\leq \phi_1\oplus\cdots\oplus \phi_n\oplus k(\neg \quot(s)) \\
&= \phi_1\oplus\cdots\oplus \phi_n\oplus \neg(\quot(s)^k) \\
&= \quot(s)^k\to(\phi_1\oplus\cdots\oplus \phi_n),
\end{align*}
which yields $\quot(s)^k\odot \neg \quot(t)\leq \phi_1\oplus\cdots\oplus\phi_n$. So, $\quot(s)^k\odot \neg \quot(t)$ belongs to the ideal of $\F(\x)$ generated by $\{\quot(u_i)\mid i\in I\}$.
We claim that 
\[
\Si\Log (s^k\to t)\approx 1.
\] 
Let $\A\in\Delta$, and $f\colon \x\to \A$ an assignment satisfying $\A,f\models \Si$. If $g\colon \F(\x)\to\A$ is the unique homomorphism extending $f$, then $g^{-1}(0)$ is an ideal of $\F(\x)$ containing $\quot(u_i)$ for every $i\in I$. Since $\quot(s)^k\odot \neg \quot(t)$ belongs to the ideal generated by $\{\quot(u_i)\mid i\in I\}$, we get $\quot(s)^k\odot \neg \quot(t)\in g^{-1}(0)$. Therefore,
\[
\quot(s)^k\to \quot(t)=\neg \quot(s)^k\oplus \quot(t)=\neg(\quot(s)^k\odot \neg \quot(t))\in g^{-1}(1).
\]
We conclude that
\[
\li{f}(s^k\to t)=(g\circ \quot)(s^k\to t)=g(\quot(s)^k\to \quot(t))=1,
\]
i.e.\ $\A,f\models (s^k\to t)\approx 1$. This settles the lemma.
\end{proof}
Next, let us observe that for every term $s\in \T(\x)$ and real number $r\in \int$ there exists a term $\u{r}s\in \T(\x)$ such that, for all $h\in \Max{\F(\x)}$,
\begin{equation}\label{eq:real-scalars}
\w{\quot(\u{r}s)}(h)=r\cdot \big(\w{\quot(s)}(h)\big),
\end{equation}
where $\w{\quot(\u{r}s)},\w{\quot(s)}\colon \Max{\F(\x)}\to\int$ are the continuous functions defined in~\eqref{eq:Gelfand-transform}.
In other words, the multiplication by real scalars in $\int$ is definable in the language of $\delta$-algebras. If $s$ is the constant $1$, we write $\u{r}$ instead of $\u{r}1$. To define the term $\u{r}s$, consider a binary expansion $\vec{r}\in \{0,1\}^\omega$ of $r$ and let 
\[
\vec{t}\coloneqq (t_i)_{i\in \omega}\in \T(\x)^\omega \ \text{ where } \ t_i\coloneqq\begin{cases} s &\mbox{if } r_i=1 \\
0 & \mbox{otherwise.} \end{cases}
\]
It follows from equation~\eqref{eq:interp-delta-C(X)} that the term $\u{r}s\coloneqq \delta(\vec{t})$ satisfies the desired property.

This yields, in particular, an elementary proof of the following fact:
\begin{proposition}\label{p:spectrum-of-free}
For any set $\x$ there exists a homeomorphism $\Max{\F(\x)}\cong \int^{\x}$, where the Tychonoff cube $\int^{\x}$ is equipped with the product topology.
\end{proposition}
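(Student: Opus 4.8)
The plan is to construct the homeomorphism directly from the universal property of the free $\delta$-algebra, using the fact that $\int$ generates $\Delta$.

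First I would observe that, for any set $\x$, the underlying set of $\hom_{\Delta}(\F(\x),\int)$ is in natural bijection with $\int^{\x}$: by the universal property of $\F(\x)$, every $\delta$-homomorphism $\F(\x)\to\int$ corresponds uniquely to an assignment $\x\to\int$, i.e.\ to a point of $\int^{\x}$. Explicitly, the bijection sends a homomorphism $h$ to the family $(h(\quot(x)))_{x\in\x}$, and conversely an assignment $f\colon\x\to\int$ to the unique homomorphism $g_f$ extending $f$ along $\quot$. By Theorem~\ref{t:semisimple-and-full}\ref{Delta-full}, every MV-homomorphism between $\delta$-algebras is automatically a $\delta$-homomorphism, so this is the same as $\hom_{\MV}(\F(\x),\int)$, whose underlying set is by definition that of $\Max{\F(\x)}$. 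So the bijection is settled; it remains to check it is a homeomorphism.

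Next I would verify continuity in both directions using the respective subbases. The space $\Max{\F(\x)}$ carries the subspace topology from $\int^{\F(\x)}$, so it is generated by the maps $\w{\quot(t)}\colon\Max{\F(\x)}\to\int$, $h\mapsto h(\quot(t))$, as $t$ ranges over terms in $\T(\x)$. The product $\int^{\x}$ carries the topology generated by the coordinate projections $\pi_x\colon\int^{\x}\to\int$, $x\in\x$. Under the bijection, the projection $\pi_x$ corresponds to $\w{\quot(x)}$, so the projections pull back to continuous maps on $\Max{\F(\x)}$; this gives continuity of $\Max{\F(\x)}\to\int^{\x}$. For the converse direction, I need each $\w{\quot(t)}$ to be continuous on $\int^{\x}$ when transported along the bijection; since terms are built from finitely many variables by the operations $\oplus,\neg,0$ and $\delta$, this follows because these operations are interpreted continuously on $\int$ — truncated addition and $\neg$ are plainly continuous, and $\delta(\vec{\g})=\sum_{i}\g_i/2^i$ is a uniformly convergent series of continuous functions, hence continuous, as already recorded around equation~\eqref{eq:interp-delta-C(X)}. (One can either argue this termwise by induction on term structure, or more slickly note that the transported map $\int^{\x}\to\int$ equals the evaluation of the element $\quot(t)$ of the $\delta$-algebra $\C(\int^{\x},\int)$ at varying points, which is continuous by definition of the $\delta$-algebra structure on $\C(\int^{\x},\int)$.) Finally, a continuous bijection from the compact space $\Max{\F(\x)}$ to the Hausdorff space $\int^{\x}$ is automatically a homeomorphism, so one only really needs continuity in one direction once both spaces are known to be compact Hausdorff — and $\int^{\x}$ is compact Hausdorff by Tychonoff.

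I do not expect a serious obstacle here; the one point requiring a little care is the continuity of $t\mapsto\w{\quot(t)}$ as a function on $\int^{\x}$ for terms $t$ involving $\delta$, i.e.\ making precise that infinitary terms still induce continuous maps. This is handled cleanly by recognising the transported map as evaluation against the element $\quot(t)\in\C(\int^{\x},\int)$, using that $\C(\int^{\x},\int)$ is a $\delta$-algebra with the series interpretation of $\delta$; the universal property of $\F(\x)$ then identifies this element with $\li{f}(t)$ for the generic assignment. The remaining bookkeeping — matching subbases and invoking Theorem~\ref{t:semisimple-and-full}\ref{Delta-full} to pass freely between MV- and $\delta$-homomorphisms — is routine.
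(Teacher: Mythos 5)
Your proof is correct, but it takes a somewhat different route from the paper. The paper first reduces to the one-generator case: since $\Max\colon\Delta^{\op}\to\KH$ preserves limits (Remark~\ref{rem:epis-to-monos}) and $\F(\x)$ is the coproduct of $\x$ copies of $\F(x)$, it suffices to show $\Max{\F(x)}\cong\int$; it then takes the evaluation map $h\mapsto h(\quot(x))$, calls it clearly a bijection, and verifies continuity by writing $\nu^{-1}([\epsilon,\epsilon'])$ as an intersection of sets of the form $\V(\cdot)$ using the definable constants $\u{\epsilon}$ introduced just before the proposition. You instead construct the bijection $\Max{\F(\x)}\cong\int^{\x}$ for arbitrary $\x$ in one step from the universal property of the free algebra together with fullness of the forgetful functor (Theorem~\ref{t:semisimple-and-full}\ref{Delta-full}) -- the same fullness the paper uses implicitly through Remark~\ref{rem:epis-to-monos} -- and you get continuity of the forward map for free from the subbasis description of the two product topologies, bypassing both the coproduct-to-product reduction and the definable-constant computation. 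Both arguments finish identically, with the fact that a continuous bijection from a compact space to a Hausdorff space is a homeomorphism. Your approach is arguably more elementary and self-contained; the paper's has the side benefit of exercising the $\V(\cdot)$ and $\u{r}$ machinery it has just set up, and the explicit closed-interval computation. Your extra verification that the inverse direction is continuous (continuity of infinitary term functions on $\int^{\x}$, via the $\delta$-algebra $\C(\int^{\x},\int)$) is correct but, as you note yourself, not needed once compactness of $\Max{\F(\x)}$ is invoked.
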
 
\begin{proof}
Since the functor $\Max\colon \Delta^\op\to \KH$ sends coproducts in $\Delta$ to products in $\KH$ by Remark~\ref{rem:epis-to-monos}, and $\F(\x)$ is the coproduct of $\x$ copies of the $\delta$-algebra $\F(x)$ free on one generator, it suffices to show that $\Max{\F(x)}\cong \int$. Let $\nu\colon \Max{\F(x)}\to\int$ be the function sending a homomorphism $h\colon \F(x)\to\int$ to $h(\quot(x))$. Then $\nu$ is clearly a bijection. Further, for any $\epsilon,\epsilon'\in\int$, 
\begin{align*}
\nu^{-1}([\epsilon,\epsilon'])&=\{h\in\Max{\F(x)}\mid \epsilon\leq h(\quot(x))\leq \epsilon'\} \\
&=\{h\in\Max{\F(x)}\mid \epsilon\ominus h(\quot(x))=0=h(\quot(x))\ominus \epsilon'\} \\
&=\{h\in\Max{\F(x)}\mid h(\quot(\u{\epsilon})\ominus \quot(x))=0=h(\quot(x)\ominus \quot(\u{\epsilon'}))\} \\
&=\V(\quot(\u{\epsilon})\ominus \quot(x))\cap \V(\quot(x)\ominus \quot(\u{\epsilon'}))
\end{align*}
which is a closed subset of $\Max{\F(x)}$. Hence, $\nu$ is a continuous bijection. As every continuous bijection between compact Hausdorff spaces is a homeomorphism, the statement follows.
\end{proof}
We now state and prove a Robinson's Joint Consistency Theorem for the logic $\Log$, which will allow us to prove a useful interpolation result in Corollary~\ref{c:interpolation-delta} below. (In fact, the desired interpolation result follows from the special case of the Joint Consistency Theorem where the two theories share the same set of propositional variables.)
\begin{theorem}\label{th:logical-urysohn}
For any two theories $\Si_1(\x,\y),\Si_2(\y,\z)$, the union $\Si_1\cup \Si_2$ is satisfiable if, and only if, there is no term $s(\y)$ such that $\Si_1\Log s\approx 0$ and $\Si_2\Log s\approx 1$.
\end{theorem}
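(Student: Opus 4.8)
The trivial implication is immediate: if $\int, f \models \Si_1 \cup \Si_2$ for some assignment $f$ and there were a term $s(\y)$ with $\Si_1 \Log s \approx 0$ and $\Si_2 \Log s \approx 1$, then $\li{f}(s)$ would have to equal both $0$ and $1$ in $\int$. For the converse I argue contrapositively: assuming $\Si_1 \cup \Si_2$ is not satisfiable, I will exhibit a term $s(\y)$ with $\Si_1 \Log s \approx 0$ and $\Si_2 \Log s \approx 1$. Throughout I work with assignments into $\int$ (legitimate by \eqref{eq:int-generates} and the characterisation of satisfiability recalled before Lemma~\ref{l:compactness-logic-delta}), I take $\x, \y, \z$ pairwise disjoint (as the notation suggests; this disjointness is genuinely needed), and I identify $\Max{\F(Z)}$ with the cube $\int^Z$ via Proposition~\ref{p:spectrum-of-free} and each term $t(Z)$ with its continuous interpretation $\w{\quot(t)}\colon \int^Z \to \int$.

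First, normalise: replacing each equation by an equivalent one of the form $t \approx 1$ (via Chang's distance and $\neg$), write $\Si_1 = \{p_i \approx 1 \mid i \in I\}$ with each $p_i$ a term over $\x \cup \y$, and $\Si_2 = \{q_j \approx 1 \mid j \in J\}$ with each $q_j$ over $\y \cup \z$. Set
\[
P \coloneqq \bigcap_{i\in I}\V(\neg\quot(p_i)) \subseteq \int^{\x\cup\y}, \qquad Q \coloneqq \bigcap_{j\in J}\V(\neg\quot(q_j)) \subseteq \int^{\y\cup\z};
\]
these are closed, and I let $D_1 \coloneqq \pi_\y(P)$ and $D_2 \coloneqq \pi_\y(Q)$ be their projections to $\int^\y$, closed as continuous images of compact spaces. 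The first key point is that $D_1 \cap D_2 = \emptyset$: a point $b$ in the intersection would provide $a \in \int^\x$ with $(a,b) \in P$ and $c \in \int^\z$ with $(b,c) \in Q$, and since no $p_i$ mentions a variable of $\z$ and no $q_j$ mentions one of $\x$, the assignment on $\x \cup \y \cup \z$ determined by $(a,b,c)$ would satisfy every $p_i \approx 1$ and every $q_j \approx 1$, i.e.\ $\int$ would satisfy $\Si_1 \cup \Si_2$ under it, contradicting unsatisfiability.

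The heart of the proof is a \emph{definable Urysohn lemma} on $\int^\y$: from the disjoint closed sets $D_1, D_2$ I extract a separating element of $\F(\y)$ by ideal theory. Let $I_k \coloneqq \{c \in \F(\y) \mid \w{c}(h) = 0 \text{ for all } h \in D_k\}$ be the vanishing ideal of $D_k$ ($k = 1, 2$); since every closed set is of the form $\V(J)$ for an ideal $J$ (Lemma~\ref{l:closed-sets-spectra}\ref{closed-Max}), one gets $\V(I_k) = D_k$. Hence $\V(I_1 \cup I_2) = D_1 \cap D_2 = \emptyset = \V(\{1\})$, so by Lemma~\ref{l:closed-sets-spectra}\ref{V-inc} the ideal $\langle I_1 \cup I_2\rangle$ is all of $\F(\y)$. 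Feeding $1 \in \langle I_1 \cup I_2\rangle$ into the description \eqref{eq:generated-ideal} of generated ideals and collecting summands (ideals being closed under $\oplus$) yields $c_1 \in I_1$ and $c_2 \in I_2$ with $c_1 \oplus c_2 = 1$. Then $\w{c_1}$ vanishes on $D_1$; and on $D_2$, where $\w{c_2} = 0$, we get $\w{c_1} = \w{c_1} \oplus \w{c_2} = \w{c_1 \oplus c_2} = 1$. Pick any term $s(\y)$ with $\quot(s) = c_1$. If $\int, f \models \Si_1$ then $f$ restricts to a point of $P$, whence $f\restriction\y \in D_1$, so $\li{f}(s) = \w{c_1}(f\restriction\y) = 0$; as this holds for all such $f$, $\Si_1 \Log s \approx 0$. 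Symmetrically $\Si_2 \Log s \approx 1$, so $s$ is the required witness.

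I expect the main obstacle to be conceptual rather than technical: recognising that the interpolant should be manufactured inside $\F(\y)$ --- the free algebra over the shared variables --- from the ideal-theoretic identity $1 \in \langle I_1 \cup I_2\rangle$, once the vanishing sets of $\Si_1$ and $\Si_2$ have been collapsed onto $\int^\y$ by projection. The remaining ingredients (the reduction to equations $t \approx 1$, the passage between assignments and points of spectra, and the verification that $\w{c_1}$ separates $D_1$ from $D_2$) are routine. One could alternatively first invoke the Compactness Lemma~\ref{l:compactness-logic-delta} to replace $\Si_1$ and $\Si_2$ by single equations $p \approx 1$ and $q \approx 1$, so that $P = \V(\neg\quot(p))$ and $Q = \V(\neg\quot(q))$ become single closed vanishing sets; this streamlines the bookkeeping but is not essential.
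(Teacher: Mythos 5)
Your proof is correct, and it departs from the paper's at the decisive step. The shared part: like the paper, you normalise the equations, identify the satisfying assignments of $\Si_1$ and $\Si_2$ with closed subsets of the cubes, and project to two disjoint closed subsets of $\int^{\y}$ (your $D_1,D_2$ are exactly the paper's $\pi_{\y}(\V(S_1)\times\int^{\z})$ and $\pi_{\y}(\int^{\x}\times\V(S_2))$, including the same disjointness argument from the disjointness of the variable blocks). The difference is how the separating term is produced. The paper first proves the case $\x=\z=\emptyset$ using Compactness (Lemma~\ref{l:compactness-logic-delta}) and the Local Deduction Theorem (Lemma~\ref{l:deduction}) to obtain $s\coloneqq u^k$ from a finite subconjunction $u$ of $\Si_2$, and then feeds the projected theories $\tilde{\Si}_1,\tilde{\Si}_2$ into that base case. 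You instead pass to the vanishing ideals $I_1,I_2$ of $D_1,D_2$ in $\F(\y)$, note that $\V(I_1\cup I_2)=\emptyset$ forces $\langle I_1\cup I_2\rangle=\F(\y)$ (which only needs that a proper ideal extends to a maximal one, i.e.\ the easy instance of Lemma~\ref{l:closed-sets-spectra}\ref{V-inc} with $S'=\{1\}$, exactly as in Corollary~\ref{c:interpolation-delta}), and split a representation of $1$ coming from~\eqref{eq:generated-ideal} into $c_1\oplus c_2=1$ with $c_k\in I_k$, so that $\w{c_1}$ is $0$ on $D_1$ and $1$ on $D_2$. This is sound: the finiteness the paper extracts from compactness is absorbed into the finitary description of generated ideals, and the splitting of the $\oplus$-sum plays the role that the Local Deduction Theorem plays in the paper (indeed it is the same ideal-theoretic mechanism that underlies the paper's proof of Lemma~\ref{l:deduction}, applied once and directly). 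What the paper's route buys is reuse of the logical machinery it has just set up, and a statement of the base case in purely syntactic terms; what yours buys is a shorter, more self-contained argument with an explicit ``MV-algebraic Urysohn lemma'' shape, which makes Remark~\ref{rem:Urysohn} immediate. Your appeals to closedness of the projections (compactness plus Hausdorffness) and to the reduction to $\int$-valued assignments via~\eqref{eq:int-generates} are all legitimate, so I see no gap.
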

\begin{proof} 
If either $\Si_1$ or $\Si_2$ are unsatisfiable, there is nothing to prove. Hence, assume $\Si_1,\Si_2$ are satisfiable. 
Clearly, if the union $\Si_1\cup \Si_2$ is satisfiable, there is no term $s(\y)$ satisfying $\Si_1\Log s\approx 0$ and $\Si_2\Log s\approx 1$. Hence, suppose that $\Si_1\cup \Si_2$ is unsatisfiable.

Let us assume first that $\x=\z=\emptyset$, and so $\Si_1$ and $\Si_2$ are theories in the variables~$\y$.
By Lemma~\ref{l:compactness-logic-delta}, there exists a finite subset $\{\sigma_1,\ldots,\sigma_n\}\subseteq \Si_2$ such that $\Si_1\cup\{\sigma_1,\ldots,\sigma_n\}$ is unsatisfiable. Suppose that each equation $\sigma_i$, for $i\in\{1,\ldots,n\}$, is of the form $t_i\approx 1$ for some term $t_i(\y)$, and define the term 
\[
u(\y)\coloneqq t_1\wedge\cdots\wedge t_n.
\] 
Then $\Si_1\cup \{u\approx 1\}\Log 0\approx 1$.
In view of Lemma~\ref{l:deduction}, there exists a $k\in\N$ such that $\Si_1\Log (u^k\to 0)\approx 1$, that is $\Si_1\Log \neg(u^k)\approx 1$. Therefore, $s\coloneqq u^k$ satisfies $\Si_1\Log s\approx 0$. Further, $\Si_2\Log u\approx 1$ entails $\Si_2\Log s\approx 1$.

For the general case, let us assume without loss of generality that 
\[
\Si_1=\{s_i(\x,\y)\approx 0\mid i\in I\} \ \text{ and } \ \Si_1=\{t_j(\y,\z)\approx 0\mid j\in J\}.
\] 
Let $S_1\coloneqq \{\quot(s_i)\mid i\in I\}\subseteq \F(\x,\y)$ and $S_2\coloneqq \{\quot(t_j)\mid j\in J\}\subseteq \F(\y,\z)$. By Proposition~\ref{p:spectrum-of-free}, $\V(S_1)$ and $\V(S_2)$ can be identified with closed subsets of the Tychonoff cubes $\int^{\x,\y}$ and $\int^{\y,\z}$, respectively. When $\Si_1$ and $\Si_2$ are regarded as theories in the variables $\x,\y,\z$, the corresponding closed subsets of $\int^{\x,\y,\z}$ are the cylinders $\V(S_1)\times\int^{\z}$ and $\int^{\x}\times \V(S_2)$. As $\Si_1\cup \Si_2$ is unsatisfiable, these two cylinders are disjoint. But then
\[
\pi_{\y}(\V(S_1)\times\int^{\z})\cap \pi_{\y}(\int^{\x}\times \V(S_2))=\emptyset,
\]
where $\pi_{\y}\colon \int^{\x,\y,\z}\twoheadrightarrow \int^{\y}$ is the projection. Just observe that, if $b\in\int^{\y}$ is both in $\pi_{\y}(\V(S_1)\times\int^{\z})$ and $\pi_{\y}(\int^{\x}\times \V(S_2))$, then there exist $a\in \int^{\x}$ and $c\in \int^{\z}$ such that $\{(a,b)\}\times\int^{\z}\subseteq \V(S_1)\times\int^{\z}$ and $\int^{\x}\times\{(b,c)\}\subseteq \int^{\x}\times \V(S_2)$. So, $(a,b,c)$ is in the intersection of the cylinders $\V(S_1)\times\int^{\z}$ and $\int^{\x}\times \V(S_2)$, a contradiction. 

Now, since $\pi_{\y}(\V(S_1)\times\int^{\z})$ is a closed subset of $\int^{\y}$, there exists a set $T_1\subseteq \F(\y)$ such that $\pi_{\y}(\V(S_1)\times\int^{\z})=\V(T_1)$. Similarly, $\pi_{\y}(\int^{\x}\times \V(S_2))=\V(T_2)$ for some set $T_2\subseteq \F(\y)$. Define the theories
\[
\tilde{\Si}_1(\y)\coloneqq \{s'(\y)\approx 0\mid s'\in \quot^{-1}(T_1)\} \ \text{ and } \ \tilde{\Si}_2(\y)\coloneqq \{t'(\y)\approx 0\mid t'\in \quot^{-1}(T_2)\}.
\]
Because $\V(T_1)\cap \V(T_2)=\emptyset$, the theory $\tilde{\Si}_1 \cup \tilde{\Si}_2$ is unsatisfiable. By the first part of the proof, there exists a term $s(\y)$ such that $\tilde{\Si}_1 \Log s\approx 0$ and $\tilde{\Si}_2 \Log s\approx 1$. Using the fact that $\pi_{\y}(\V(S_1)\times\int^{\z})=\V(T_1)$, it is not difficult to see that $\Si_1\Log \sigma$ for every $\sigma\in\tilde{\Si}_1$; similarly, $\Si_2\Log \sigma$ for every $\sigma\in \tilde{\Si}_2$. Thus, we see that $\Si_1 \Log s\approx 0$ and $\Si_2 \Log s\approx 1$.
\end{proof}

Note that with any subset $J\subseteq\F(\x)$ we can associate a theory
\begin{equation}\label{eq:Si[J]}
\Th{J} \coloneqq \{s(\x)\approx 0\mid s\in \quot^{-1}(J)\}
\end{equation}
in the variables $\x$.
The theories of the form $\Th{J}$, for $J$ an ideal of $\F(\x)$, will play a crucial role in the following.
\begin{remark}\label{rem:Urysohn}
By Lemma~\ref{l:closed-sets-spectra}\ref{closed-Max} and Proposition~\ref{p:spectrum-of-free}, every closed subspace of a Tychonoff cube $\int^{\x}$ is homeomorphic to $\V(J)$ for some ideal $J$ of $\F(\x)$. Translating from ideals to theories, cf.\ equation~\eqref{eq:Si[J]}, Theorem~\ref{th:logical-urysohn} then yields the following Urysohn's Lemma for Tychonoff cubes: Given disjoint closed sets $C_1,C_2\subseteq \int^{\x}$, there exists a continuous function $\f\colon \int^{\x}\to\int$ satisfying $\f_{\restriction C_1}=0$ and $\f_{\restriction C_2}=1$. Just observe that, for any two ideals $J_1,J_2$ of $\F(\x)$, $\V(J_1)\cap \V(J_2)=\emptyset$ precisely when the theory $\Th{J_1}\cup \Th{J_2}$ is unsatisfiable.
\end{remark}

\begin{corollary}\label{c:interpolation-delta}
Let $\A\in\Delta$, $C_1,\ldots,C_n$ pairwise disjoint closed subsets of $\Max{\A}$, and $r_1,\ldots,r_n\in\int$. Then there exists $a\in \A$ satisfying $\w{a}_{\restriction C_i}=r_i$ for every $i\in\{1,\ldots,n\}$.
\end{corollary}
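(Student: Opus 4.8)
The plan is to reduce the statement, by induction on $n$, to the case $n=2$ with values $0$ and $1$, i.e.\ to the following \emph{separation lemma}: for any two disjoint closed subsets $C,D$ of $\Max{\A}$ there is some $b\in\A$ with $\w{b}_{\restriction C}=0$ and $\w{b}_{\restriction D}=1$. (If $\A$ is trivial then $\Max{\A}=\emptyset$ and $a\coloneqq 0$ works, so one may assume $\A$ non-trivial throughout.)

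To prove the separation lemma I would argue entirely with MV-ideals. By Lemma~\ref{l:closed-sets-spectra}\ref{closed-Max}, write $C=\V(I)$ and $D=\V(K)$ for ideals $I,K$ of the underlying MV-algebra of $\A$. Since $\V(\langle I\cup K\rangle)=\V(I)\cap\V(K)=C\cap D=\emptyset$, and since every proper ideal of a non-trivial MV-algebra is contained in a maximal ideal, one gets $\langle I\cup K\rangle=\A$; in particular $1\in\langle I\cup K\rangle$, so by~\eqref{eq:generated-ideal} there are $e_1,\dots,e_p\in I\cup K$ with $1\leq e_1\oplus\cdots\oplus e_p$. Letting $\alpha$ be the $\oplus$-sum of those $e_j$ that lie in $I$ (with $\alpha\coloneqq 0$ if there are none) and $\beta$ the $\oplus$-sum of the remaining ones, one has $\alpha\in I$, $\beta\in K$, and $\alpha\oplus\beta=1$. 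Evaluating at $h\in C=\V(I)$ gives $h(\alpha)=0$ and hence $h(\beta)=h(\alpha\oplus\beta)=1$; evaluating at $h\in D=\V(K)$ gives $h(\beta)=0$ and hence $h(\alpha)=1$. Thus $b\coloneqq\alpha$ does the job. (In the degenerate case $C=\emptyset$ one has $I=\A$; taking $e_1=1$ yields $b=1$, which is still fine since then only the condition on $D$ is non-vacuous.)

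Given the separation lemma, I would prove the corollary by induction on $n$, the case $n=0$ being vacuous. For the inductive step, set $C\coloneqq C_1\cup\cdots\cup C_{n-1}$, a closed subset of $\Max{\A}$ disjoint from $C_n$, obtain $a'\in\A$ with $\w{a'}_{\restriction C_i}=r_i$ for $i<n$ from the induction hypothesis, and obtain $b\in\A$ with $\w{b}_{\restriction C}=0$ and $\w{b}_{\restriction C_n}=1$ from the separation lemma. Recalling from the discussion preceding Proposition~\ref{p:spectrum-of-free} that multiplication by a real scalar is definable in the language of $\delta$-algebras, and that $\eta_{\A}$ is a $\delta$-homomorphism by Theorem~\ref{t:semisimple-and-full}\ref{Delta-unit}, the interpretation $\u{r_n}\in\A$ of the closed term $\u{r_n}$ satisfies $\w{\u{r_n}}\equiv r_n$. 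I would then set
\[
a\coloneqq (a'\odot\neg b)\oplus(\u{r_n}\odot b)\in\A
\]
and verify, by applying the MV-homomorphism $\eta_{\A}$, that for $h\in C$ one has $\w{b}(h)=0$, so $\w{a}(h)=(\w{a'}(h)\odot 1)\oplus(r_n\odot 0)=\w{a'}(h)$, while for $h\in C_n$ one has $\w{b}(h)=1$, so $\w{a}(h)=(\w{a'}(h)\odot 0)\oplus(r_n\odot 1)=r_n$. Hence $\w{a}_{\restriction C_i}=r_i$ for all $i\leq n$, completing the induction.

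The hard part will be the separation lemma: this is the only place where the non-triviality of $\A$ (hence the existence of maximal ideals) and the explicit description~\eqref{eq:generated-ideal} of generated ideals are genuinely used, and one must be slightly careful with the degenerate case $C=\emptyset$. Once it is in place, the inductive gluing is a routine MV-algebraic computation, using only the identities $x\odot 1=x$, $x\odot 0=0$, $0\oplus x=x$ (valid in every MV-algebra) together with the definability of real scalars via $\delta$. As an alternative to the direct ideal argument, the separation lemma could also be deduced from Theorem~\ref{th:logical-urysohn} after presenting $\A$ as a quotient of a free $\delta$-algebra and identifying $\Max{\A}$ with a closed subspace of a Tychonoff cube (Remark~\ref{rem:epis-to-monos}, Proposition~\ref{p:spectrum-of-free}), but the ideal-theoretic route is shorter and self-contained.
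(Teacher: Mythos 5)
Your proof is correct, but it takes a genuinely different route from the paper. The paper treats this statement as a true corollary of the Joint Consistency Theorem (Theorem~\ref{th:logical-urysohn}): it presents $\A$ as a quotient of a free $\delta$-algebra, identifies $\Max{\A}$ with a closed subspace of a Tychonoff cube via Proposition~\ref{p:spectrum-of-free}, encodes the sets $C_i$ and the quotient by theories $\Th{J_i}$ and $\Gamma=\ker(f\circ\quot)$, and then applies Theorem~\ref{th:logical-urysohn} once for each $i$ to obtain terms $s_i$ with $\w{\quot(s_i)}_{\restriction C_i}=1$ and $\w{\quot(s_i)}_{\restriction C_j}=0$ for $j\neq i$, finally taking the image of $\u{r_1}s_1\vee\cdots\vee\u{r_n}s_n$; this route rests, through Theorem~\ref{th:logical-urysohn}, on compactness and the local deduction theorem. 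You instead stay entirely inside $\A$: the two-set separation lemma follows from $1\in\langle I\cup K\rangle$, the description~\eqref{eq:generated-ideal} of generated ideals, and Zorn plus H\"older's Theorem~\ref{MVHolder} (you use the latter implicitly when passing from ``no maximal ideal contains $\langle I\cup K\rangle$'' to ``$\V(\langle I\cup K\rangle)=\emptyset$ forces $\langle I\cup K\rangle=\A$''—worth making explicit), and the inductive gluing $(a'\odot\neg b)\oplus(\u{r_n}\odot b)$ is a routine MV computation once one knows $\w{\u{r_n}}\equiv r_n$, which follows from the definability of scalars and fullness of $\Delta\to\MV$ (or the $\delta$-homomorphism property of $\eta_{\A}$). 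What each approach buys: yours is shorter, more elementary and avoids all the logical machinery of Section~\ref{s:logic-delta}; the paper's fits its programme of deriving topological facts from properties of $\Log$ (the result is deliberately packaged as a consequence of Joint Consistency, in the same spirit as Remark~\ref{rem:Urysohn}) and produces all $n$ separating elements in one uniform construction rather than by induction.
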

\begin{proof}
Let $\x$ be a set such that there exists a surjective homomorphism $f\colon \F(\x)\epi \A$. By Remark~\ref{rem:epis-to-monos}, the maximal spectrum $\Max{\A}$ can be identified with a closed subspace of $\Max{\F(\x)}$. Hence, by Lemma~\ref{l:closed-sets-spectra}\ref{closed-Max}, each closed set $C_i$, for $i\in\{1,\ldots,n\}$, is of the form $\V(J_i)\cap \Max{\A}$ for some ideal $J_i$ of $\F(\x)$. With the notation of~\eqref{eq:Si[J]}, for each $i\in\{1,\ldots,n\}$ we consider the theory $\Th{J_i}$. We have 
\[
\V(\langle J_1\cup\cdots\cup J_n\cup f^{-1}(0)\rangle)=\V(J_1)\cap\cdots\cap \V(J_n)\cap \Max{\A}=C_1\cap\cdots\cap C_n=\emptyset,
\]
i.e.\ $\langle J_1\cup\cdots\cup J_n\cup f^{-1}(0)\rangle$ is the improper ideal of $\F(\x)$.
Setting $\Gamma(\x)\coloneqq\ker(f\circ\quot)$, we see that the theory 
\[
\Th{J_1}\cup \cdots\cup\Th{J_n} \cup \Gamma
\] 
is unsatisfiable. By Theorem~\ref{th:logical-urysohn}, for each $i\in\{1,\ldots,n\}$ there exists a term $s_i$ such that 
\[
\Th{J_i}\cup \Gamma\Log s_i\approx 1 \ \text{ and } \ \bigcup_{j\neq i}{\Th{J_j}}\cup \Gamma\Log s_i\approx 0.
\] 
That is, $\w{\quot(s_i)}_{\restriction C_i}=1$ and $\w{\quot(s_i)}_{\restriction C_j}=0$ whenever $j\neq i$. 
If $a\in \A$ is the image of the term $\u{r_1}s_1\vee\cdots\vee \u{r_n}s_n$ under $f\circ \quot\colon \T(\x)\epi \A$ then, using the fact that $C_1,\ldots,C_n$ are pairwise disjoint, we see that $\w{a}_{\restriction C_i}=r_i$ for every $i\in\{1,\ldots,n\}$.
\end{proof}

\section{The Beth definability property}\label{s:definability}
In this section we prove that the logic $\Log$ has the Beth definability property, asserting that implicit definability is equivalent to explicit definability. We then derive the Stone-Weierstrass Theorem from the Beth definability property of $\Log$.
\begin{definition}
Consider a set of variables $\x$, a variable $y$ not in $\x$, and a theory $\Si(\x,y)$. For any variable $z$, write $\Si(\x,z)$ for the theory obtained from $\Si(\x,y)$ by replacing $y$ by $z$. We say that $\Si$ \emph{implicitly defines} $y$ over $\x$ if, for every variable $z$,
\[
\Si(\x,y)\cup\Si(\x,z)\Log y\approx z. 
\] 
Further, $\Si$ \emph{explicitly defines} $y$ over $\x$ if there exists a term $s_y(\x)$  such that
\[
\Si(\x,y)\Log y\approx s_y.
\]
\end{definition}
The meaning of implicit definability is that any assignment $f\colon \x\to \A$ into an algebra $\A\in\Delta$ can be extended to at most one assignment $g\colon \x,y\to\A$ satisfying $\A,g\models \Si$. On the other hand, an explicit definition $s_y$ of $y$ witnesses the fact that the interpretation of $y$ in a model of $\Si(\x,y)$ is completely determined by the interpretation of $\x$. Clearly, if $\Si$ explicitly defines $y$ over $\x$, then it implicitly defines $y$ over $\x$. The Beth definability property states that the converse holds as well.
\begin{definition}
The logic $\Log$ has the \emph{Beth definability property} if, whenever a theory $\Si(\x,y)$ implicitly defines $y$ over $\x$, then $\Si(\x,y)$ explicitly defines $y$ over $\x$.
\end{definition}

\begin{remark}
The definition of Beth definability property given above is the direct specialisation of the classical notion for first-order logic, see e.g.\ \cite[p.~90]{CK1990}, obtained by replacing relation symbols by propositional variables. This definition, employed e.g.\ in \cite[\S 5.6]{HMT1985} or~\cite{Makkai1993}, corresponds to the \emph{singleton} Beth property in abstract algebraic logic~\cite{Hoogland2000}.
\end{remark}

Following Proposition~\ref{p:spectrum-of-free}, throughout this section we identify a Tychonoff cube $\int^{\x}$ with the maximal spectrum $\Max{\F(\x)}$. In fact, it follows from the proof of this proposition that the map $\Max{\F(\x)}\to\int^{\x}$ sending a homomorphism $h\colon \F(\x)\to\int$ to $h\circ\quot_{\restriction \x}\colon \x\to\int$ is a homeomorphism.

A particular class of implicit definitions is obtained as follows. Fix a set $\x$, a closed subset $X\subseteq \int^{\x}$, and a continuous function $\f\colon X\to \int$. If $y$ is a variable not in $\x$, then the graph of $\f$ can be identified with a closed subset of $\int^{\x,y}\cong\Max{\F(\x,y)}$. Hence, by Lemma~\ref{l:closed-sets-spectra}\ref{closed-Max}, the graph of $\f$ is homeomorphic to $\V(J_{\f})$ for some ideal $J_{\f}\subseteq \F(\x,y)$. Define the theory 
\begin{equation}\label{eq:sigma-f}
\Si_{\f}(\x,y)\coloneqq \{s(\x,y)\approx 0\mid s\in \quot^{-1}(J_{\f})\},
\end{equation}
i.e.\ $\Si_{\f}\coloneqq \Th{J_{\f}}$ with the notation in~\eqref{eq:Si[J]}. Note that an assignment $g\colon \x,y\to \int$ satisfies $\int,g\models \Si_{\f}$ if, and only if, it lies on the graph of $\f$ when regarded as a point of $\int^{\x,y}$. Thus, $\Si_{\f}$ implicitly defines $y$ over $\x$ because the graph of $\f$ is a functional relation:
\begin{lemma}\label{l:Sigma-f-implicit}
The theory $\Si_{\f}$ implicitly defines $y$ over $\x$.
\end{lemma}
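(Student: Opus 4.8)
The plan is to unwind the definitions. Recall that $\Si_{\f}(\x,y) = \Th{J_{\f}}$, where $\V(J_{\f}) \subseteq \int^{\x,y} \cong \Max{\F(\x,y)}$ is (homeomorphic to) the graph of $\f\colon X \to \int$. To show $\Si_{\f}$ implicitly defines $y$ over $\x$, I must show that for every variable $z \notin \x$,
\[
\Si_{\f}(\x,y)\cup\Si_{\f}(\x,z)\Log y\approx z.
\]
By completeness of $\Log$ with respect to valuations in $\int$ (equation~\eqref{eq:int-generates}), it suffices to check that every assignment $g\colon \x \cup \{y,z\} \to \int$ satisfying $\int,g\models \Si_{\f}(\x,y)\cup\Si_{\f}(\x,z)$ also satisfies $g(y) = g(z)$.

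First I would record the semantic characterisation of $\Si_{\f}$ already noted just before the lemma: an assignment $h\colon \x,y\to \int$ satisfies $\int,h\models\Si_{\f}$ exactly when the point $(h_{\restriction\x}, h(y)) \in \int^{\x}\times\int \cong \int^{\x,y}$ lies on the graph of $\f$; equivalently, $h_{\restriction\x} \in X$ and $h(y) = \f(h_{\restriction\x})$. (The only mild subtlety is the identification of $\Max{\F(\x,y)}$ with $\int^{\x,y}$ and of $\Max{\F(\x)} \times \int$ with $\int^{\x,y}$, which is exactly Proposition~\ref{p:spectrum-of-free} together with the fact that $\F(\x,y)$ is the coproduct $\F(\x) \amalg \F(y)$ and $\Max$ turns coproducts into products; this is spelled out in the paragraph preceding the lemma.)

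Now suppose $g\colon \x\cup\{y,z\}\to\int$ satisfies both $\int,g\models\Si_{\f}(\x,y)$ and $\int,g\models\Si_{\f}(\x,z)$. Applying the characterisation to the restriction $g_{\restriction\x,y}$ gives $g_{\restriction\x}\in X$ and $g(y) = \f(g_{\restriction\x})$. Applying it to $g_{\restriction\x,z}$ — which satisfies $\Si_{\f}(\x,z)$, the theory obtained from $\Si_{\f}(\x,y)$ by renaming $y$ to $z$ — gives likewise $g_{\restriction\x}\in X$ and $g(z) = \f(g_{\restriction\x})$. Hence $g(y) = \f(g_{\restriction\x}) = g(z)$, so $\int,g\models y\approx z$. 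By~\eqref{eq:int-generates} this proves $\Si_{\f}(\x,y)\cup\Si_{\f}(\x,z)\Log y\approx z$, i.e.\ $\Si_{\f}$ implicitly defines $y$ over $\x$.

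There is no real obstacle here: the content is entirely in the bookkeeping of the identification $\int^{\x,y}\cong\Max{\F(\x,y)}$ and in the observation that $\V(J_{\f})$ is the graph of $\f$, both of which are set up before the statement. The one point to state carefully is that renaming $y$ to $z$ in $\Si_{\f}$ corresponds, under the identification $\int^{\x,z}\cong\int^{\x}\times\int$, to the same functional condition "$z = \f(\cdot)$ on $X$"; once that is noted, functionality of the graph of $\f$ does the rest.
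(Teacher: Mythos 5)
Your proposal is correct and follows essentially the same route as the paper: both reduce to $\int$-valued assignments (the paper by taking $\A=\int$ without loss of generality, you via equation~\eqref{eq:int-generates}) and then conclude from the fact that assignments satisfying $\Si_{\f}$ lie on the graph of $\f$, which is a functional relation. The only cosmetic difference is that the paper argues via uniqueness of extensions of an assignment $\x\to\A$, whereas you verify the two-variable formulation $\Si_{\f}(\x,y)\cup\Si_{\f}(\x,z)\Log y\approx z$ directly; these amount to the same computation.
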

\begin{proof}
Consider an assignment $f\colon \x\to \A$ with $\A\in\Delta$, and assume that $g\colon \x,y\to \A$ is an assignment extending $f$ and satisfying $\A,g\models \Si_{\f}$. We show that $g$ is the only assignment of the variables $\x,y$ with these properties. 

If $\A$ is the trivial algebra, then this is clearly true. Hence, let us suppose that $\A$ is non-trivial. We can assume without loss of generality that $\A=\int$. 
If $g'\colon \x,y\to \int$ is another assignment extending $f$ and satisfying $\int,g'\models \Si_{\f}$, and $\pi\colon \int^{\x,y}\epi\int^{\x}$ is the projection map, we get $\pi(g)=\pi(g')$ because both $g$ and $g'$ extend $f$. Therefore, since $g$ and $g'$ belong to the graph of $\f$, which is a functional relation, it must be $g=g'$.
\end{proof}
By definition, the theory $\Si_{\f}(\x,y)$ explicitly defines $y$ over $\x$ if there exists a term $s_y(\x)$ such that $\Si_{\f}\Log y\approx s_y$. 
By equation~\eqref{eq:int-generates}, this is equivalent to saying that, for every assignment $g\colon \x,y\to \int$, 
\[
\int, g\models \Si_{\f} \  \text{ entails } \ \int,g\models y\approx s_y. 
\]
We already observed that $\int, g\models \Si_{\f}$ if, and only if, $g$ belongs to the graph of $\f$. In turn, if $h\colon \F(\x,y)\to \int$ is the unique homomorphism extending $g$, $\pi\colon \int^{\x,y}\epi\int^{\x}$ is the projection on the $\x$-coordinates, and $\pi_y\colon \int^{\x,y}\epi \int$ is the projection on the $y$-coordinate,
\begin{align*}
\int,g\models y\approx s_y \ &\Longleftrightarrow \ h(\quot(y))=h(\quot(s_y)) \\ 
& \Longleftrightarrow \ \w{\quot(y)}(g)=\w{\quot(s_y)}(\pi(g)) \\
& \Longleftrightarrow \ \pi_y(g)=\w{\quot(s_y)}(\pi(g))
\end{align*}
because $\w{\quot(y)}\colon \int^{\x,y}\to \int$ coincides with $\pi_y$. Thus, 
\begin{equation}\label{eq:Si-f-explicit-def}
\Si_{\f} \text{ explicitly defines $y$ over $\x$ } \Longleftrightarrow \  \exists\phi\in\F(\x) \text{ such that } \w{\phi}_{\restriction X}=\f.
\end{equation}
Note that, by the previous discussion, for the left-to-right direction we can take $\phi\coloneqq \quot(s_y)$. Just observe that, for all assignments $g\in \int^{\x,y}$ lying on the graph of $\f$, $\pi_y(g)=\f(\pi(g))$. Hence, 
\[
\pi_y(g)=\w{\quot(s_y)}(\pi(g)) \text{ for all assignments $g$ with $\int, g\models \Si_{\f}$} \ \Longleftrightarrow \ \f(w)=\w{\quot(s_y)}(w) \ \forall w\in X.
\]
\begin{remark}
By considering all theories of the form $\Si_{\f}$, the right-hand condition in~\eqref{eq:Si-f-explicit-def} implies the following form of the Tietze-Urysohn Extension Theorem: Every continuous function $\f\colon X\to \int$ defined on a closed subset $X$ of a Tychonoff cube $\int^{\x}$ can be extended to a continuous function on $\int^{\x}$.
\end{remark}

In Theorem~\ref{t:equivalent-formulations} below, we will see that the existence of explicit definitions of the type~\eqref{eq:Si-f-explicit-def} is enough to deduce that all implicit definitions can be made explicit, i.e.\ that $\Log$ has the Beth definability property. We start by proving the following useful fact:

\begin{lemma}\label{l:eta-is-epi}
For any $\delta$-algebra $\A$, $\eta_{\A}\colon \A\to\C(\Max{\A},\int)$ is an epimorphism in $\Delta$.
\end{lemma}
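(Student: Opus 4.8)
### Proof proposal for Lemma~\ref{l:eta-is-epi}

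The plan is to show that the map $\eta_{\A}\colon \A\to\C(\Max{\A},\int)$ is epic in $\Delta$ by using the dual description of epimorphisms: since $\Max\colon\Delta^{\op}\to\KH$ preserves limits (Remark~\ref{rem:epis-to-monos}) and, in fact, $\hom_{\Delta}(-,\int)$ is just the underlying-set functor of $\Max$, a $\delta$-homomorphism $k\colon\B\to\B'$ is an epimorphism in $\Delta$ precisely when $\Max{k}\colon\Max{\B'}\to\Max{\B}$ is injective. So it suffices to check that $\Max{\eta_{\A}}\colon\Max{\C(\Max{\A},\int)}\to\Max{\A}$ is injective. First I would invoke Theorem~\ref{t:Max-C-adj}\ref{counit}: the counit component $\ep_{\Max{\A}}\colon\Max{\A}\to\Max{\C(\Max{\A},\int)}$ is a homeomorphism, and by the triangle identity for the adjunction $\Max\dashv\C$ one has $\Max{\eta_{\A}}\circ\ep_{\Max{\A}}=\id{\Max{\A}}$. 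Hence $\Max{\eta_{\A}}$ is a (split) surjection with a section, so it is bijective; in particular it is injective, and $\eta_{\A}$ is an epimorphism in $\Delta$.

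A second, more hands-on route, if one prefers to avoid the triangle-identity bookkeeping, is the following. Suppose $u,v\colon\C(\Max{\A},\int)\to\B$ are $\delta$-homomorphisms with $u\circ\eta_{\A}=v\circ\eta_{\A}$. By Theorem~\ref{t:semisimple-and-full}\ref{Delta-unit}, $\B$ embeds into a power $\int^{I}$, so it is enough to treat the case $\B=\int$; that is, take two homomorphisms $u,v\colon\C(\Max{\A},\int)\to\int$ agreeing on the image of $\eta_{\A}$ and show $u=v$. Each of $u,v$ is an evaluation at a point of $\Max{\C(\Max{\A},\int)}\cong\Max{\A}$ (via $\ep_{\Max{\A}}$), say $u=\ev_{x}$ and $v=\ev_{x'}$ with $x,x'\in\Max{\A}$; the hypothesis $u\circ\eta_{\A}=v\circ\eta_{\A}$ says $\w{a}(x)=\w{a}(x')$, i.e.\ $x(a)=x'(a)$, for every $a\in\A$, whence $x=x'$ and so $u=v$. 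The main thing to get right here is the identification of $\delta$-homomorphisms $\C(\Max{\A},\int)\to\int$ with points of $\Max{\A}$, which is exactly the content of the homeomorphism $\ep_{\Max{\A}}$ together with the fact (Theorem~\ref{t:semisimple-and-full}\ref{Delta-full}) that every MV-homomorphism between $\delta$-algebras is automatically a $\delta$-homomorphism.

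I expect the only genuinely delicate point to be keeping the dualities straight: being an epimorphism in $\Delta$ must be translated into injectivity of the $\Max$-image, which rests on $\hom_{\Delta}(-,\int)$ being representable and on $\KH\to\Set$ reflecting the relevant (co)limits, both recorded in Remark~\ref{rem:epis-to-monos}. Everything else—the triangle identity, the fact that $\ep_{\Max{\A}}$ is a homeomorphism, and the semisimplicity reduction to $\int$—is quoted directly from Theorems~\ref{t:Max-C-adj} and~\ref{t:semisimple-and-full}. I would present the first route as the proof, since it is the shortest: $\Max{\eta_{\A}}$ has the section $\ep_{\Max{\A}}$, hence is injective, hence $\eta_{\A}$ is epic.
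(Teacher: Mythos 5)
Your proposal is correct, but it proves the lemma by a genuinely different and more economical route than the paper. The paper's proof reduces the problem, via injectivity of $\eta_{\B}$ and fullness of $\C$ (Theorem~\ref{t:Max-C-adj}\ref{counit}), to separating two continuous maps $\f_1,\f_2\colon \Max{\B}\rightrightarrows\Max{\A}$ by a function of the form $\w{a}$ with $a\in\A$, and it produces such an $a$ by invoking the interpolation result of Corollary~\ref{c:interpolation-delta} (prescribing the exact values $\h(\f_1(x))$ and $\h(\f_2(x))$ at the two distinct points $\f_1(x)\neq\f_2(x)$); that corollary in turn rests on the Joint Consistency Theorem and the local deduction theorem. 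You avoid all of that: since every $\delta$-algebra embeds in a power of $\int$ (Theorem~\ref{t:semisimple-and-full}\ref{Delta-unit}), testing the epi property against homomorphisms into $\int$ suffices, and then the surjectivity of the counit $\ep_{\Max{\A}}$ — equivalently the triangle identity $\Max{\eta_{\A}}\circ\ep_{\Max{\A}}=\id{\Max{\A}}$ — identifies any two such homomorphisms with evaluations at points $x,x'\in\Max{\A}$, which are separated by some $a\in\A$ by the very definition of $\Max{\A}$ as a set of homomorphisms $\A\to\int$. This buys a shorter proof using only the background facts of Sections~2, at the price of not illustrating the logical machinery (Theorem~\ref{th:logical-urysohn} and its corollary) that the paper is keen to showcase. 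One small caveat: in your first route you attribute the biconditional ``$k$ is epi in $\Delta$ iff $\Max{k}$ is injective'' to Remark~\ref{rem:epis-to-monos}, but the remark only records the forward direction; the direction you actually need (injectivity of $\Max{k}$ implies $k$ epi) requires precisely the cogenerator reduction to $\B=\int$ via Theorem~\ref{t:semisimple-and-full}\ref{Delta-unit}, which you do spell out in your second route — so state that reduction up front (or simply present the second route as the proof) and the argument is complete.
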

\begin{proof}
Consider distinct homomorphisms $h_1,h_2\colon \C(\Max{\A},\int)\rightrightarrows \B$, for some $\B$ in $\Delta$. We must prove that $h_1\circ \eta_{\A}\neq h_2\circ \eta_{\A}$. The map $\eta_{\B}$ is injective by Theorem~\ref{t:semisimple-and-full}\ref{Delta-unit}, hence the latter condition is equivalent to $\eta_{\B}\circ h_1\circ \eta_{\A}\neq \eta_{\B}\circ h_2\circ\eta_{\A}$.
\[\begin{tikzcd}[column sep=3em]
\A \arrow{r}{\eta_{\A}} & \C(\Max{\A},\int) \arrow[yshift=4pt]{r}{\eta_{\B}\circ h_1} \arrow[yshift=-4pt]{r}[swap]{\eta_{\B}\circ h_2} & \C(\Max{\B},\int)
\end{tikzcd}\] 
By Theorem~\ref{t:Max-C-adj}\ref{counit}, the functor $\C\colon \KH^{\op}\to \MV$ is full, so there exist continuous functions $\f_1,\f_2\colon \Max{\B}\rightrightarrows \Max{\A}$ satisfying $\C{\f_1}=\eta_{\B}\circ h_1$ and $\C{\f_2}=\eta_{\B}\circ h_2$. Since $\eta_{\B}\circ h_1\neq \eta_{\B}\circ h_2$, there exists $\h\in\C(\Max{\A},\int)$ such that \[\h\circ \f_1=\C{\f_1}(\h)\neq\C{\f_2}(\h)=\h\circ \f_2.\] Let $x\in \Max{\B}$ be such that $\h(\f_1(x))\neq \h(\f_2(x))$. It is enough to find $a\in \A$ satisfying $\w{a}(\f_1(x))=\h(\f_1(x))$ and $\w{a}(\f_2(x))=\h(\f_2(x))$, for then we have 
\begin{equation*}
((\eta_{\B}\circ h_1\circ\eta_{\A})(a))(x)=(\C{\f_1}(\w{a}))(x)=\w{a}(\f_1(x))=\h(\f_1(x))
\end{equation*}
and similarly $((\eta_{\B}\circ h_2\circ\eta_{\A})(a))(x)=\h(\f_2(x))$, showing that $\eta_{\B}\circ h_1\circ \eta_{\A}\neq \eta_{\B}\circ h_2\circ\eta_{\A}$. The existence of such an $a\in \A$ follows from Corollary~\ref{c:interpolation-delta} by setting $C_1\coloneqq \{\f_1(x)\}$, $C_2\coloneqq \{\f_2(x)\}$, $r_1\coloneqq \h(\f_1(x))$, and $r_2\coloneqq \h(\f_2(x))$, and using the fact that $\f_1(x)\neq \f_2(x)$.
\end{proof}

\begin{theorem}\label{t:equivalent-formulations}
The following statements are equivalent:
\begin{enumerate}[label=\textup{(\arabic*)}]
\item $\Log$ has the Beth definability property.
\item For any set $\x$ and continuous function $\f\colon X\to\int$ defined on a closed subset $X\subseteq \int^{\x}$, the theory $\Si_{\f}(\x,y)$ from equation~\eqref{eq:sigma-f} explicitly defines $y$ in terms of $\x$.
\item\label{eta-iso} For every $\A\in\Delta$, the homomorphism $\eta_{\A}\colon \A\to \C(\Max{\A},\int)$ is an isomorphism. 
\item All epimorphisms in $\Delta$ are surjective.
\end{enumerate}
\end{theorem}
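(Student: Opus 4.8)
The plan is to prove the cycle $(1)\Rightarrow(2)\Rightarrow(3)\Rightarrow(4)\Rightarrow(1)$. The step $(1)\Rightarrow(2)$ is immediate: by Lemma~\ref{l:Sigma-f-implicit} the theory $\Si_{\f}$ always implicitly defines $y$ over $\x$, so under $(1)$ it also explicitly defines $y$ over $\x$.

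For $(2)\Rightarrow(3)$ I would fix $\A\in\Delta$ and, using that $\Delta$ is a variety of infinitary algebras with free algebras, choose a surjective $\delta$-homomorphism $p\colon\F(\x)\epi\A$ for a suitable set $\x$; by Remark~\ref{rem:epis-to-monos} the map $\Max p$ identifies $\Max\A$ with a closed subspace $X$ of $\Max{\F(\x)}\cong\int^{\x}$. Since $\eta_{\A}$ is injective by Theorem~\ref{t:semisimple-and-full}\ref{Delta-unit}, only surjectivity needs to be checked. Given $\g\in\C(\Max\A,\int)$, view it through the homeomorphism $\Max\A\cong X$ as a continuous $\f\colon X\to\int$; by $(2)$ and the reformulation~\eqref{eq:Si-f-explicit-def} there is $\phi\in\F(\x)$ with $\w{\phi}_{\restriction X}=\f$, and then the identity $\w{p(\phi)}(h)=\w{\phi}(\Max p(h))$ for $h\in\Max\A$ shows that $\eta_{\A}(p(\phi))=\g$.

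The core is $(3)\Rightarrow(4)$. By Theorem~\ref{t:Max-C-adj}\ref{counit} the counit $\ep$ of $\Max\dashv\C$ is a natural isomorphism, and $(3)$ says the unit $\eta$ is one as well; hence this adjunction is an adjoint equivalence and $\Max$ restricts to an equivalence $\Delta^{\op}\simeq\KH$. Under it an epimorphism of $\Delta$ corresponds to a monomorphism of $\KH$, that is, to a closed subspace inclusion $Z\hookrightarrow Y$. I would then use that every such inclusion is a \emph{regular} monomorphism of $\KH$: if $S$ denotes the set of continuous functions $Y\to\int$ vanishing on $Z$, then Urysohn's lemma — available for Tychonoff cubes by Remark~\ref{rem:Urysohn}, and hence for an arbitrary compact Hausdorff space since each embeds as a closed subspace of a cube — yields $Z=\bigcap_{\f\in S}\f^{-1}(0)$, exhibiting $Z\hookrightarrow Y$ as the equalizer of $(\f)_{\f\in S}\colon Y\to\int^{S}$ and the constant map $0$. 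Transporting back through the equivalence, every epimorphism of $\Delta$ is a regular epimorphism, and in any variety in the sense of S{\l}omi{\'n}ski the regular epimorphisms are exactly the surjective homomorphisms; this gives $(4)$. (Alternatively one can bypass the equivalence: via the isomorphisms of $(3)$ the epimorphism $h\colon\A\to\B$ is identified with the restriction map $\C(\Max\A,\int)\to\C(\Max\B,\int)$ along the closed embedding $\Max h$, and this restriction is onto by the Tietze extension theorem for $\int$-valued functions on compact Hausdorff spaces.)

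Finally, $(4)\Rightarrow(1)$ is the usual bridge between Beth definability and surjectivity of epimorphisms. Assuming $\Si(\x,y)$ implicitly defines $y$ over $\x$, I may take $\Si=\{s_i(\x,y)\approx0\mid i\in I\}$; let $\B\in\Delta$ be the $\delta$-algebra presented by the generators $\x\cup\{y\}$ subject to $\Si$, with $\overline{\x},\overline{y}$ the images of the generators, and let $\A\subseteq\B$ be the $\delta$-subalgebra generated by $\overline{\x}$. Implicit definability makes the inclusion $\A\hookrightarrow\B$ an epimorphism in $\Delta$: if $k_1,k_2\colon\B\rightrightarrows\mathcal{C}$ agree on $\A$ then they agree on $\overline{\x}$, and the assignment sending $\x$ to the common value, $y$ to $k_1(\overline{y})$, and a fresh variable $z$ to $k_2(\overline{y})$ satisfies $\Si(\x,y)\cup\Si(\x,z)$, forcing $k_1(\overline{y})=k_2(\overline{y})$ and hence $k_1=k_2$. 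By $(4)$ this inclusion is onto, so $\overline{y}=s_y(\overline{\x})$ for some term $s_y(\x)$, and the universal property of $\B$ then yields $\Si\Log y\approx s_y$. The step I expect to be the main obstacle is $(3)\Rightarrow(4)$: one must recognise that the dual equivalence turns arbitrary epimorphisms of $\Delta$ into regular epimorphisms, which is precisely where a topological separation/extension result (Urysohn's lemma, or equivalently Tietze for $\int$-valued maps) genuinely enters; the remaining implications are bookkeeping once the free-algebra and adjunction machinery of Sections~\ref{s:prel}--\ref{s:logic-delta} is in hand.
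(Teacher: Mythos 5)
Your proposal is correct and proves the same cycle $(1)\Rightarrow(2)\Rightarrow(3)\Rightarrow(4)\Rightarrow(1)$, with $(1)\Rightarrow(2)$ and $(2)\Rightarrow(3)$ essentially identical to the paper's; but the other two implications follow genuinely different routes. For $(3)\Rightarrow(4)$ the paper never mentions regular monos or epis: it reduces to showing that a homomorphism which is both epi and mono is an isomorphism, proving that $\Max$ sends monos to surjections via the congruence extension property for MV-algebras, Zorn's Lemma and H\"older's Theorem, and then reading the conclusion off the naturality square for $\eta$ — an argument that stays algebraic on the $\Delta$ side. You instead pass through the equivalence $\Delta\simeq\KH^{\op}$ and show that every mono of $\KH$ is a regular mono by a Urysohn-type separation, so every epi of $\Delta$ is a regular epi and hence surjective. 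This works, but it leans on two facts the paper does not state and which deserve a line each: that regular epimorphisms in an infinitary (S{\l}omi\'nski) variety are exactly the surjections (true, since coequalisers are computed as quotients), and Urysohn's Lemma for the spaces involved — here it is cleaner to note that these are maximal spectra, which embed closedly into cubes by Remark~\ref{rem:epis-to-monos} and Proposition~\ref{p:spectrum-of-free} (or to invoke Corollary~\ref{c:interpolation-delta} directly), since the usual proof that an abstract compact Hausdorff space embeds into a cube already presupposes Urysohn; your Tietze-based bypass is also valid but imports an external theorem where the paper stays within its own toolkit. For $(4)\Rightarrow(1)$ your argument is the direct Blok--Hoogland-style one: form $\B=\F(\x,y)/\Si(\x,y)$, take the subalgebra generated by the images of $\x$, show the inclusion is an epimorphism using implicit definability, and extract the explicit term from surjectivity; this is shorter, entirely syntactic, and avoids duality, whereas the paper runs Makkai's argument through the regularity of $\KH$ (pushout plus equaliser, with Lemma~\ref{l:eta-is-epi} needed to obtain the dual equivalence from item (4)), which buys a more categorical perspective at the cost of extra machinery.
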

\begin{proof}
$1\Rightarrow 2$. This is an immediate consequence of Lemma~\ref{l:Sigma-f-implicit}.

$2\Rightarrow 3$. Since $\eta_{\A}\colon \A\to \C(\Max{\A},\int)$ is injective by Theorem~\ref{t:semisimple-and-full}\ref{Delta-unit}, it suffices to show that it is surjective. Consider an arbitrary continuous function $\f\colon \Max{\A}\to\int$ and let $\x$ be a set such that there exists a surjective homomorphism $p\colon \F(\x)\epi \A$. By Remark~\ref{rem:epis-to-monos}, the space $\Max{\A}$ can be identified with a closed subspace of $\Max{\F(\x)}\cong \int^{\x}$. If the theory $\Si_{\f}(\x,y)$ explicitly defines $y$ in terms of $\x$ then, in view of equation~\eqref{eq:Si-f-explicit-def}, there exists $\phi\in\F(\x)$ such that $\w{\phi}_{\restriction \Max{\A}}=\f$. But 
\[
\w{\phi}_{\restriction \Max{\A}}=\w{p(\phi)}=\eta_{\A}(p(\phi)),
\] 
and so $\eta_{\A}$ is surjective.

$3\Rightarrow 4$. It is enough to show that every homomorphism in $\Delta$ that is both an epimorphism and a monomorphism is an isomorphism. For any homomorphism $h\colon \A\to \B$ in $\Delta$, the naturality of $\eta$ yields the following commutative diagram:
\[\begin{tikzcd}
\A \arrow{d}[swap]{h} \arrow{r}{\eta_{\A}} & \C(\Max{\A},\int) \arrow{d}{\C(\Max{h})} \\
 \B \arrow{r}{\eta_{\B}} & \C(\Max{\B},\int)
\end{tikzcd}\]
If $h$ is epi, then $\Max{h}\colon \Max{\B}\to\Max{\A}$ is injective by Remark~\ref{rem:epis-to-monos}. We claim that $\Max{h}$ is surjective provided $h$ is mono, i.e.\ injective. 

Suppose $h$ is mono and identify $\A$ with a subalgebra of $\B$. By the congruence extension property for MV-algebras, see e.g.~\cite[Proposition~8.2]{GM2005}, for any homomorphism $k\colon \A\to \int$ the maximal (hence, proper) ideal $k^{-1}(0)$ of $\A$ generates a proper ideal of $\B$. The latter can then be extended to a maximal ideal $\m$ of $\B$ by Zorn's Lemma, and the unique homomorphism $h_{\m}\colon \B\to\int$ provided by H\"older's Theorem extends $k$. This shows that $\Max{h}$ is surjective whenever $h$ is mono. 

Therefore, if $h$ is both epi and mono, $\Max{h}$ is a continuous bijection between compact Hausdorff spaces, hence a homeomorphism. We conclude that $\C(\Max{h})$ is an isomorphism in $\Delta$. Since the square above commutes and $\eta_{\A},\eta_{\B}$ are isomorphisms, $h$ is also an isomorphism.

$4\Rightarrow 1$. The following argument, essentially due to Makkai~\cite[\S 1]{Makkai1993}, exploits the fact that the category $\KH$ is regular.\footnote{A category is \emph{regular} if (i) it has finite limits, (ii) every morphism factors as a regular epi followed by a mono, and (iii) regular epis are stable under pullbacks, cf.~\cite{BGvO71} or~\cite{Borceux2}. In $\KH$, the (regular epi, mono) factorisation of a continuous map is the usual factorisation through its set-theoretic image endowed with the subspace topology.} Suppose that a theory $\Si(\x,y)$, with $y$ a variable not in $\x$, implicitly defines $y$ over $\x$. Let $z$ be a variable that is distinct from $y$ and not contained in $\x$, and consider the following diagram in $\Delta$ (we write e.g.\ $\F(\x,y)/{\Si(\x,y)}$ for the quotient of $\F(\x,y)$ with respect to the congruence generated by the image of $\Si(\x,y)$ under the homomorphism $\quot\times \quot\colon \T(\x,y)^2\to \F(\x,y)^2$)
\[\begin{tikzcd}[row sep=2.5em, column sep=3.0em]
\F(\x) \arrow[r,"g"] \arrow[d,swap,"g"] &
  \F(\x,y)/{\Si(\x,y)} \arrow[d,"h_1"] \\
  \F(\x,y)/{\Si(\x,y)} \arrow[r,"h_2"] &
  \F(\x,y,z)/{\Si(\x,y)\cup \Si(\x,z)}
\end{tikzcd}\]
where 
\begin{itemize}[leftmargin=1em]
\item $g$ is the composition of $\F(\x)\mono \F(\x,y)$ with the quotient $\F(\x,y)\epi\F(\x,y)/{\Si(\x,y)}$, 
\item $h_1$ is the composition of the inclusion  $\F(\x,y)/{\Si(\x,y)}\mono \F(\x,y,z)/{\Si(\x,y)}$ with the quotient map $\F(\x,y,z)/{\Si(\x,y)}\epi \F(\x,y,z)/{\Si(\x,y)\cup \Si(\x,z)}$,
\item $h_2$ is obtained by first applying the isomorphism $\F(\x,y)/{\Si(\x,y)}\to \F(\x,z)/{\Si(\x,z)}$ that replaces $y$ by $z$, then the inclusion $\F(\x,z)/{\Si(\x,z)}\mono \F(\x,y,z)/{\Si(\x,z)}$, and finally the quotient $\F(\x,y,z)/{\Si(\x,z)}\epi \F(\x,y,z)/{\Si(\x,y)\cup \Si(\x,z)}$.
\end{itemize}
It is not difficult to see that the diagram above is a pushout square in $\Delta$. Now, consider the following equaliser diagram in the category $\Delta$:
\[\begin{tikzcd}
\B \arrow[hookrightarrow]{r}{i} & \F(\x,y)/{\Si(\x,y)} \arrow[yshift=4pt]{r}{h_1} \arrow[yshift=-4pt]{r}[swap]{h_2} & \F(\x,y,z)/{\Si(\x,y)\cup \Si(\x,z)}
\end{tikzcd}\]
Since $h_1\circ g=h_2\circ g$, by the universal property of $\B$ there is a homomorphism $j\colon \F(\x)\to \B$ such that $g=i\circ j$.
\begin{claim*}
$(j,i)$ is the (epi, regular mono) factorisation of $g\colon \F(\x)\to \F(\x,y)/{\Si(\x,y)}$.
\end{claim*} 
\begin{proof}[Proof of Claim]
If all epimorphisms in $\Delta$ are surjective then, for every $\A\in\Delta$, the embedding $\eta_{\A}\colon \A\to\C(\Max{\A},\int)$ is an isomorphism by Lemma~\ref{l:eta-is-epi}. Hence, the dual adjunction $\Max\dashv\C\colon\KH^{\op}\to\Delta$ yields an equivalence $\Delta\cong\KH^\op$. Let $\f\colon X\to Y$ be the continuous map in $\KH$ dual to $g$. Since $\KH$ is a regular category, the (regular epi, mono) factorisation of $\f$ is $(e,m)$, where $e\colon X\epi Z$ is the coequaliser of the kernel pair of $\f$, and $m\colon Z\mono Y$ is the unique morphism provided by the universal property of $Z$. See e.g.~\cite[p.~7]{BGvO71} for a proof. Recall that the kernel pair of $\f$ is obtained by taking the pullback of $\f$ along itself. Thus, by construction, the dual of $e$ is $i$ and the dual of $m$ is $j$. We then see that $(j,i)$ is the (epi, regular mono) factorisation of $g$.
\end{proof}
Since all epimorphisms in $\Delta$ are surjective, $j$ must be a surjection.
Because $\Si(\x,y)$ implicitly defines $y$ in terms of $\x$, we have 
\[
\Si(\x,y)\cup\Si(\x,z)\Log y\approx z.
\] 
Thus, the homomorphisms $h_1$ and $h_2$ coincide on the equivalence class of $y$ in $\F(\x,y)/{\Si(\x,y)}$. Let $\psi$ denote this equivalence class. Then $\psi\in \B$ and, by surjectivity of $j$, there is $\phi\in\F(\x)$ such that $j(\phi)=\psi$. If $s_y$ is any element of $\T(\x)$ whose image under $\quot\colon \T(\x)\epi\F(\x)$ is $\phi$, we get $g(\quot(s_y))=i(j(\phi))=\psi$. Hence $\Si(\x,y)\Log y\approx s_y$, as was to be proved.
\end{proof}

\begin{remark}
The equivalence between items 1 and 4 in Theorem~\ref{t:equivalent-formulations} is known, in the framework of abstract algebraic logic, as the Blok-Hoogland Theorem~\cite{BH2006}. In the particular case of the equational consequence $\vDash_{\mathcal{V}}$ associated with a Birkhoff variety $\mathcal{V}$, the Blok-Hoogland Theorem states that all epimorphisms in $\mathcal{V}$ are surjective if, and only if, $\vDash_{\mathcal{V}}$ has the so-called \emph{infinite Beth property}. This result does not apply in our setting because $\Delta$ is not a Birkhoff variety of algebras. However, a lengthy but rather straightforward verification shows that the Blok-Hoogland Theorem can be generalised to all varieties of infinitary algebras in the sense of S{\l}omi{\'n}ski~\cite{Slominski59}. Here we have opted for a more direct proof, specific to the variety $\Delta$, which emphasises the role of the theories $\Si_{\f}$.
\end{remark}

We are now in a position to prove the following result.
\begin{theorem}\label{th:Beth-prop}
The logic $\Log$ has the Beth definability property.
\end{theorem}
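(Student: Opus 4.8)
The plan is to derive the Beth definability property of $\Log$ from Theorem~\ref{t:equivalent-formulations}: it suffices to verify condition~(2) there, which by the equivalence~\eqref{eq:Si-f-explicit-def} states precisely that for every set $\x$, every closed $X\subseteq\int^{\x}$, and every continuous $\f\colon X\to\int$, there is $\phi\in\F(\x)$ with $\w{\phi}_{\restriction X}=\f$. Setting $B\coloneqq\{\w{\psi}_{\restriction X}\mid\psi\in\F(\x)\}$, this is the statement that $B=\C(X,\int)$. First I would record that $B$, being the image of the $\delta$-homomorphism $\psi\mapsto\w{\psi}_{\restriction X}$, is a $\delta$-subalgebra of $\C(X,\int)$; it separates the points of $X$ (it contains the restricted coordinate projections $\w{\quot(x)}_{\restriction X}$ for $x\in\x$), contains every constant with value in $\int$ (the images of the terms $\u{r}$), and is closed under multiplication by real scalars in $\int$ by~\eqref{eq:real-scalars}.

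I would then prove $B=\C(X,\int)$ by showing that $B$ is both dense and closed in the uniform metric $\varrho$ of $\C(X,\int)$. \emph{Density.} Given $\f\in\C(X,\int)$ and $n\geq 1$, for each $k\in\{1,\dots,2^{n}\}$ the sets $\f^{-1}([k2^{-n},1])$ and $\f^{-1}([0,(k-1)2^{-n}])$ are disjoint closed subsets of $X$, hence (as $X$ is closed in $\int^{\x}\cong\Max{\F(\x)}$) of $\Max{\F(\x)}$; so Corollary~\ref{c:interpolation-delta}, applied to $\A=\F(\x)$, furnishes $u^{(n)}_{k}\in B$ equal to $1$ on the former set and $0$ on the latter. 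Since $\tfrac{a+b}{2}=\fm(a)\oplus\fm(b)$ holds in every $\delta$-algebra, the iterated binary average $g_{n}$ of $u^{(n)}_{1},\dots,u^{(n)}_{2^{n}}$ lies in $B$, and a pointwise count gives $\varrho(g_{n},\f)\leq 2^{-n}$, so $g_{n}\to\f$ uniformly. \emph{Closedness.} Let $b_{m}\in B$ converge uniformly to $g$, and pass to a subsequence with $\varrho(b_{m},g)\leq 2^{-m-1}$; put $\widehat{b}_{1}\coloneqq b_{1}\ominus\u{2^{-2}}$ and $\widehat{b}_{m+1}\coloneqq\widehat{b}_{m}\vee(b_{m+1}\ominus\u{2^{-m-2}})$. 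Then $(\widehat{b}_{m})$ is an increasing sequence in $B$ with $\widehat{b}_{m}\leq g$ and $g-\widehat{b}_{m}\leq 2^{-m}$, so the differences $f_{m}\coloneqq\widehat{b}_{m+1}\ominus\widehat{b}_{m}=\widehat{b}_{m+1}-\widehat{b}_{m}$ satisfy $0\leq f_{m}\leq 2^{-m}$; therefore $e_{m}\coloneqq\underbrace{f_{m}\oplus\cdots\oplus f_{m}}_{2^{m}}=2^{m}f_{m}$ is $\int$-valued and lies in $B$, whence $\delta(e_{1},e_{2},\dots)=\sum_{m}f_{m}=g-\widehat{b}_{1}$ lies in $B$, and finally $g=\widehat{b}_{1}\oplus(g-\widehat{b}_{1})\in B$. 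Combining the two parts, every $\f\in\C(X,\int)$ lies in $\overline{B}=B$, as required.

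The step I expect to be the main obstacle is the closedness of $B$: this is where the infinitary operation $\delta$ is genuinely needed---an MV-subalgebra of $\C(X,\int)$ need not be uniformly closed---and the argument must be organised so that the rescaling of each $f_{m}$ by the factor $2^{m}$ is carried out \emph{inside} $\int$, via the identity $\underbrace{a\oplus\cdots\oplus a}_{2^{m}}=2^{m}a$ (valid when $a\leq 2^{-m}$), rather than by an illegitimate multiplication by a scalar larger than $1$; the passage to the increasing sequence $(\widehat{b}_{m})$ is exactly what turns the telescoped differences into non-negative quantities to which this device applies. By contrast, the density of $B$ is a routine iteration of Corollary~\ref{c:interpolation-delta} (hence of the Joint Consistency Theorem~\ref{th:logical-urysohn}), and the reduction to Theorem~\ref{t:equivalent-formulations} is immediate.
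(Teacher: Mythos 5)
Your proposal is correct, and its skeleton (reduce via Theorem~\ref{t:equivalent-formulations} and equation~\eqref{eq:Si-f-explicit-def} to showing that the image of $\F(\x)$ in $\C(X,\int)$ is all of $\C(X,\int)$, then prove density and uniform closedness of that image) is the same as the paper's; the interesting divergence is in how you get density. The paper only uses Corollary~\ref{c:interpolation-delta} in its weakest, two-point form (singletons $\{u\},\{v\}$) and then runs the classical lattice-style Stone--Weierstrass argument in logical clothing: the theories $\Gamma_{u,v}$ and $\Gamma'_u$ are shown unsatisfiable and the compactness of $\Log$ (Lemma~\ref{l:compactness-logic-delta}) is invoked twice to produce the finite meets $\lambda_u$ and the finite join $\mu$ approximating $\f$ within $\epsilon$. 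You instead apply Corollary~\ref{c:interpolation-delta} at full Urysohn strength to the disjoint closed level sets $\f^{-1}([k2^{-n},1])$ and $\f^{-1}([0,(k-1)2^{-n}])$ and average the resulting $u^{(n)}_k$ via $\fm$ and $\oplus$; this ``layer-cake'' approximation is shorter and avoids the compactness step altogether, at the cost of obscuring the logical content (the use of $\Log$-compactness) the paper wants to showcase. For the closedness step, the paper simply cites \cite[Lemmas~7.5 and~7.6]{MR2017} (pass to an increasing sequence, halve, extract a subsequence with increments at most $2^{-i}$, and reassemble with $\delta$), whereas your argument is a correct self-contained variant of the same idea: you force monotonicity by subtracting the constants $\u{2^{-m-2}}$, telescope, rescale each increment $f_m\leq 2^{-m}$ inside $\int$ by $2^m$-fold $\oplus$, and recover $g$ as $\widehat{b}_1\oplus\delta(e_1,e_2,\ldots)$. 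All ingredients you use (Corollary~\ref{c:interpolation-delta}, the definable constants $\u{r}$, closure of the image under the operations, Theorem~\ref{t:equivalent-formulations}) are available at this point of the paper and none depends on the Stone--Weierstrass Theorem, so there is no circularity.
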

\begin{proof}
In view of Theorem~\ref{t:equivalent-formulations}, it suffices to show that for any set $\x$, closed subset $X\subseteq \int^{\x}$, and continuous function $\f\colon X\to\int$, the theory $\Si_{\f}(\x,y)$ explicitly defines $y$ in terms of $\x$. 

By Lemma~\ref{l:closed-sets-spectra}\ref{closed-Max}, there is an ideal $J$ of $\F(\x)$ such that $X\cong \V(J)$.  Let $\A\coloneqq \F(\x)/J$, with quotient map $p\colon \F(\x)\epi \A$. We have $\Max{\A}\cong X$, and so $\A$ can be identified with a subalgebra of $\C(X,\int)$ by composing the embedding $\eta_{\A}\colon \A\mono \C(\Max{\A},\int)$ with the isomorphism $\C(\Max{\A},\int)\cong \C(X,\int)$.
In view of Corollary~\ref{c:interpolation-delta}, for any two distinct assignments $u,v\colon \x\to \int$ that belong to $X$, there exists $a_{u,v}\in \A$ such that 
\[
\w{a_{u,v}}(u)=\f(u) \ \text{ and } \ \w{a_{u,v}}(v)=\f(v).
\] 
Let $s_{u,v}\in\T(\x)$ be a term whose image under the composite $p\circ\quot\colon\T(\x)\to \A$ is $a_{u,v}$. Fix an arbitrary $\epsilon\in (0,1]$ and define the theory
\[
\Gamma_{u,v}(\x,y)\coloneqq\Si_{\f}(\x,y)\cup \{(s_{u,v}\ominus y)\wedge \u{\epsilon}\approx \u{\epsilon}\},
\]
where $\ominus$ is truncated subtraction (see Section~\ref{s:l-groups-MV}) and $\u{\epsilon}$ is the definable constant corresponding to $\epsilon$, cf.~\eqref{eq:real-scalars}. We claim that, for every $u\in X$, the theory $\bigcup_{v\in X}{\Gamma_{u,v}}$ is unsatisfiable.

Assume towards a contradiction that $\bigcup_{v\in X}{\Gamma_{u,v}}$ is satisfiable. Then, by~\eqref{eq:int-generates}, there is an assignment $f\colon \x,y\to \int$ satisfying $\int,f\models  \Gamma_{u,v}$ for every $v\in X$. Note that the restriction $v'\colon \x\to \int$ of $f$ to $\x$ belongs to $X$ because $\int,f\models \Si_{\f}$. Thus, 
\begin{align*}
\int, f\models \Gamma_{u,v'} \ &\Longrightarrow \ \int,f\models \Si_{\f}(\x,y) \cup \{(s_{u,v'}\ominus y)\wedge \u{\epsilon}\approx \u{\epsilon}\} \\
&\Longrightarrow \ \int,f\models \{y\approx \u{\f(v')}\} \cup \{(s_{u,v'}\ominus y)\wedge \u{\epsilon}\approx \u{\epsilon}\} \\
&\Longrightarrow \ \int,f\models (s_{u,v'}\ominus \u{\f(v')})\wedge \u{\epsilon}\approx \u{\epsilon},
\end{align*}
contradicting the fact that $\w{a_{u,v'}}(v')=\f(v')$, i.e.\ $\w{a_{u,v'}}(v')\ominus \f(v')=0$. 

By compactness of $\Log$ (Lemma~\ref{l:compactness-logic-delta}), there exist $v_1,\ldots, v_m\in X$ such that $\bigcup_{i=1}^m{\Gamma_{u,v_i}}$ is unsatisfiable. That is, for each $w\in X$ there is $i\in\{1,\ldots,m\}$ such that $\w{a_{u,v_i}}(w)\ominus \f(w)<\epsilon$, and so $\w{a_{u,v_i}}(w)<\f(w)+\epsilon$. Therefore, the term $\lambda_u\coloneqq s_{u,v_1}\wedge \cdots\wedge s_{u,v_m}$ satisfies
\[
\forall w\in X, \ \ \w{\quot(\lambda_u)}(w)<\f(w)+\epsilon.
\]
Now, for any $u\in X$, define the theory
\[
\Gamma'_{u}(\x,y)\coloneqq\Si_{\f}(\x,y)\cup \{(y\ominus \lambda_u)\wedge \u{\epsilon}\approx \u{\epsilon}\}.
\]
Reasoning as before, it is not difficult to see that the theory $\bigcup_{u\in X}{\Gamma'_{u}}$ is unsatisfiable.
By compactness of $\Log$, there are $u_1,\ldots,u_n\in X$ such that, for each $w\in X$, $\f(w)\ominus \w{\quot(\lambda_{u_j})}(w)<\epsilon$ for some $j\in\{1,\ldots,n\}$, hence $\w{\quot(\lambda_{u_j})}(w)>\f(w)-\epsilon$. The term $\mu\coloneqq \lambda_{u_1}\vee\cdots \vee \lambda_{u_n}$ satisfies
\begin{equation}\label{eq:unif-appr}
\forall w\in X, \ \ \f(w)-\epsilon<\w{\quot(\mu)}(w)<\f(w)+\epsilon.
\end{equation}
Since $\epsilon\in (0,1]$ is arbitrary, equation~\eqref{eq:unif-appr} entails that $\f$ belongs to the closure of $\A$ in the topology induced by the uniform metric of $\C(X,\int)$. We claim that $\f\in \A$. (The following argument is already implicit in~\cite{MR2017}; we briefly recall it for the sake of completeness.)

Suppose that $\f$ is the uniform limit of a sequence $(\f_i)_{i\in \omega}\in \A^\omega$. We can assume without loss of generality that this sequence is increasing, cf.\ the proof of \cite[Lemma~7.5]{MR2017}. Since multiplication by any real number in $\int$ is definable in the language of $\delta$-algebras, we see that $(\frac{\f_i}{2})_{i\in\omega}\in \A^\omega$. Extract a subsequence $(\g_i)_{i\in \omega}$ of $(\frac{\f_i}{2})_{i\in\omega}$ satisfying, for every $i\in\omega$, \[\sup_{u'\in X}{\{|\g_i(u')-\g_{i-1}(u')|\}}\leq\frac{1}{2^i}.\]
Then an elementary computation shows that $\frac{\f}{2}=\delta(2\g_1,2^2(\g_2\ominus \g_1),\ldots, 2^i(\g_i\ominus \g_{i-1}),\ldots)\in \A$. For a proof, see \cite[Lemma~7.6]{MR2017}. We conclude that $\f=\frac{\f}{2}\oplus \frac{\f}{2}\in \A$.

To settle the theorem, pick $\phi\in \F(\x)$ such that $p(\phi)=\f$, where $p\colon \F(\x)\epi \A$ is the quotient map. Then $\w{\phi}_{\restriction X}=\f$. By equation~\eqref{eq:Si-f-explicit-def}, $\Si_{\f}$ explicitly defines $y$ over $\x$.
\end{proof}

To conclude this section, we show how to derive the Stone-Weierstrass Theorem for compact Hausdorff spaces from the Beth definability property of~$\Log$.

\begin{proof}[Proof of Theorem~\ref{t:SW-lattice-version}]\label{proof:SW}
Suppose $X$ is a non-empty compact Hausdorff space. If $G\subseteq \C(X,\R)$ satisfies the assumptions in the statement of Theorem~\ref{t:SW-lattice-version}, then $G$ is a unital $\ell$-subgroup of $\C(X,\R)$ that is divisible (as an Abelian group) and separates the points of $X$. Write $\o{G}$ for the closure of $G$ in the topology induced by the uniform metric and observe that $\o{G}$ is also a divisible unital $\ell$-subgroup of $\C(X,\R)$. We must prove that $\o{G}=\C(X,\R)$. By Theorem~\ref{t:gamma}, it suffices to show that the inclusion $\Gamma(\o{G})\mono \C(X,\int)$ is surjective, hence an isomorphism of MV-algebras. 
We claim that $\Gamma(\o{G})$ is a $\delta$-algebra that separates the points of $X$.

The Abelian $\ell$-group $G$ separates the points of $X$ and is divisible, hence its unit interval separates the points of $X$. A fortiori, the MV-subalgebra $\Gamma(\o{G})$ of $\C(X,\int)$ separates the points of $X$. Next, we show that $\Gamma(\o{G})$ is a $\delta$-subalgebra of $\C(X,\int)$, i.e.\ it is closed under the interpretation of the operation $\delta$ in $\C(X,\int)$.
Consider a sequence $\vec{\f}=(\f_i)_{i\in \omega}\in \Gamma(\o{G})^{\omega}$. In the algebra $\C(X,\int)$, we have 
\[ 
\delta(\vec{\f})=\sum_{i=1}^{\infty}{\dfrac{\f_i}{2^i}}=\lim_{n\to\infty}{\sum_{i=1}^n{\dfrac{\f_i}{2^i}}}
\] 
where the latter limit is uniform.
Since $\o{G}$ is divisible and closed under uniform limits, $\delta(\vec{\f})\in \o{G}$. Because each $\f_i$ belongs to the unit interval of $\o{G}$, so does $\delta(\vec{\f})$. 

To improve readability, write $\A$ for the $\delta$-algebra $\Gamma(\o{G})$. Since $\A$ separates the points of $X$ we have $\Max{\A}\cong X$ (cf.~\cite[Theorem~4.16]{M11}), so the inclusion $\A\mono \C(X,\int)$
can be obtained by composing the embedding $\eta_{\A}\colon\A\mono \C(\Max{\A},\int)$ with the isomorphism $\C(\Max{\A},\int)\cong \C(X,\int)$. By Theorem~\ref{th:Beth-prop}, the logic $\Log$ has the Beth definability property. It follows by Theorem~\ref{t:equivalent-formulations} that $\eta_{\A}$ is an isomorphism, and so the inclusion $\A\mono \C(X,\int)$ is surjective.
\end{proof}
\begin{remark}\label{rem:SW-implies-Beth}
The previous proof exploits the Beth definability property of $\Log$ to derive the Stone-Weierstrass Theorem. In turn, Theorem 8.1 in~\cite{MR2017} shows that an application of an appropriate version of the Stone-Weierstrass Theorem yields item~\ref{eta-iso} in Theorem~\ref{t:equivalent-formulations}, and thus also the Beth definability property of $\Log$. In this sense, the Beth definability property of $\Log$ is equivalent to the Stone-Weierstrass Theorem for compact Hausdorff spaces.
\end{remark}
%

\section{A Hilbert-style calculus for $\Log$}\label{s:Hilbert-calculus}
In this final section we introduce an infinitary propositional logic $\Lo$ by means of a Hilbert-style calculus and show that $\Lo$ is strongly complete with respect to a natural $\int$-valued semantics (Theorem~\ref{th:strong-comp} below). Up to a translation between terms and equations, the semantic notion of consequence associated with $\Lo$ coincides with the equational consequence relation $\Log$ defined in Section~\ref{s:logic-delta}. These results thus substantiate the logical nature of $\Log$.
 
Let us fix a countably infinite set of propositional variables $\var$ and consider the propositional connectives $\delta,\to$, and $\neg$. The set $\Form$ of formulas is defined inductively as follows:
\begin{enumerate}[label=(\roman*)]
\item Each propositional variable in $\var$ is a formula.
\item If $\alpha, \beta$ are formulas, then so are $\alpha\to\beta$ and $\neg \alpha$.
\item If $\langle \alpha_i\rangle$ is a countably infinite sequence of formulas, then $\delta(\langle \alpha_i\rangle)$ is a formula.
\end{enumerate}
We shall always assume that a countably infinite sequence $\langle \alpha_i\rangle$ is indexed by $\omega$, i.e.\ $\langle \alpha_i\rangle=\alpha_1,\alpha_2,\ldots,\alpha_i,\ldots$ for $i\in\omega$.

Note that $\Form$ can be regarded as an $\Lan_{\Delta}$-algebra. For any formulas $\alpha,\beta$ and any countably infinite sequence of formulas $\vec{\gamma}= \langle \gamma_i\rangle$, we set $\alpha\oplus\beta\coloneqq \neg \alpha\to \beta$ and $\delta(\vec{\gamma})\coloneqq \delta(\langle \gamma_i\rangle)$. The operation $\neg$ is defined in the obvious way, and $0\coloneqq\neg(\alpha\to\alpha)$ for an arbitrary formula~$\alpha$. (There is no canonical choice for the interpretation of $0$ in $\Form$. However, we will see that in the Lindenbaum-Tarski algebra obtained as an appropriate quotient of $\Form$, any formula of the form $\neg(\beta\to\beta)$ will belong to the equivalence class of $\neg(\alpha\to\alpha)$.)

Recall that any MV-algebra admits a derived connective $\to$ given by $x\to y\coloneqq\neg {x\oplus y}$. The connectives $\oplus$ and $\to$ are interdefinable as $x\oplus y=\neg x\to y$. Let us say that a function $f\colon \Form \to\int$ is a  \emph{valuation} if, for all sequences of formulas $\langle \alpha_i\rangle$ and all formulas $\alpha,\beta$,
\[
f(\delta(\langle \alpha_i\rangle))=\delta(\vec{f(\alpha_i)}), \ \ f(\alpha\to \beta)=f(\alpha)\to f(\beta),\ \text{ and } \ f(\neg \alpha)=\neg f(\alpha).
\] 
Note that a valuation is the same thing as an $\Lan_{\Delta}$-homomorphism $\Form\to\int$. A valuation $f\colon \Form\to\int$ \emph{satisfies} a formula $\alpha$ if $f(\alpha)=1$. For any set of formulas $\Theta\cup\{\alpha\}$, we say that $\alpha$ is a \emph{semantic consequence} of $\Theta$ provided that, for all valuations $f$, if $f$ satisfies all formulas in $\Theta$ then it also satisfies $\alpha$. The set of semantic consequences of $\Theta$ is denoted by~$\Theta^{\vDash}$. A formula is a \emph{tautology} if it belongs to $\emptyset^{\vDash}$.

Every formula $\alpha$ yields an $\Lan_{\Delta}$-term $t_{\alpha}$ in the same variables obtained by replacing each occurrence of $\alpha\to \beta$ by $\neg \alpha\oplus \beta$. Conversely, replacing each occurrence of $\alpha\oplus\beta$ and $0$ by $\neg\alpha\to\beta$ and $\neg(\alpha\to\alpha)$, respectively, we can associate a formula $\alpha_t$ with any $\Lan_{\Delta}$-term $t$. The following fact is an immediate consequence of equation~\eqref{eq:int-generates}.
\begin{lemma}\label{l:translation-formulas-terms}
The following hold for any $\Theta\cup\{\alpha\}\subseteq \Form$ and set of $\Lan_{\Delta}$-terms $S\cup\{t\}$: 
\begin{enumerate}[label=\textup{(\alph*)}]
\item\label{form-to-term} $\alpha\in\Theta^{\vDash}$ if, and only if, $\{t_{\beta}\approx 1\mid \beta\in\Theta\}\Log t_{\alpha}\approx 1$;
\item\label{term-to-form} $\{s\approx 1\mid s\in S\}\Log t\approx 1$ if, and only if, $\alpha_{t}\in\{\alpha_s\mid s\in S\}^{\vDash}$. 
\end{enumerate}
\end{lemma}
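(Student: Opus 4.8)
The plan is to reduce both parts of the lemma to the completeness statement in equation~\eqref{eq:int-generates}, which says that $\Log$ is determined by valuations into the standard MV-algebra $\int$. The key observation is that the translations $\alpha\mapsto t_\alpha$ and $t\mapsto\alpha_t$ are, up to bookkeeping about the connective $\oplus$ versus $\to$ and the constant $0$, mutually inverse, and that under these translations a valuation $f\colon\Form\to\int$ in the sense of Section~\ref{s:Hilbert-calculus} corresponds exactly to an assignment $\x\to\int$ together with its unique extension to an $\Lan_\Delta$-homomorphism. Both $\Form$ and $\T(\x)$ are absolutely free $\Lan_\Delta$-algebras (on $\var$ and on $\x$ respectively, once we agree that $0$ is a derived symbol via $\neg(\alpha\to\alpha)$), so homomorphisms out of them into $\int$ are in bijection with functions on the generators, and the two translations induce compatible bijections on the level of these homomorphisms.

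For part~\ref{form-to-term}, I would argue as follows. Fix $\Theta\cup\{\alpha\}\subseteq\Form$ and let $\x$ be the set of propositional variables occurring in $\Theta\cup\{\alpha\}$. Suppose $\{t_\beta\approx 1\mid\beta\in\Theta\}\Log t_\alpha\approx 1$; by~\eqref{eq:int-generates} this holds iff every assignment $g\colon\x\to\int$ with $\tilde g(t_\beta)=1$ for all $\beta\in\Theta$ also satisfies $\tilde g(t_\alpha)=1$. Given a valuation $f\colon\Form\to\int$ satisfying every formula in $\Theta$, restrict it to $\x$ to get such a $g$; the point is that $f(\beta)=\tilde g(t_\beta)$ for every formula $\beta$ with variables in $\x$, which is a straightforward induction on the structure of $\beta$ using the definitions $\alpha\oplus\gamma\coloneqq\neg\alpha\to\gamma$ and $x\to y\coloneqq\neg x\oplus y$ and the matching interpretations in $\int$. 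Hence $f(\alpha)=\tilde g(t_\alpha)=1$, so $\alpha\in\Theta^\vDash$. The converse is symmetric: any assignment $g\colon\x\to\int$ extends (uniquely) to a valuation $f$ on all of $\Form$, and the same induction gives $f(\beta)=\tilde g(t_\beta)$, so if $\alpha\in\Theta^\vDash$ then $\{t_\beta\approx 1\}\Log t_\alpha\approx 1$ again by~\eqref{eq:int-generates}.

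For part~\ref{term-to-form}, I would run the mirror-image argument, now starting from terms: given $S\cup\{t\}$, let $\x$ collect the variables appearing in them, and check by induction on the structure of an $\Lan_\Delta$-term $s$ (with $0$ treated as $\neg(\alpha\to\alpha)$) that $\tilde g(s)=f(\alpha_s)$ for every assignment $g\colon\x\to\int$ with extension the valuation $f$. Combined with~\eqref{eq:int-generates} on the left and the definition of $\Theta^\vDash$ on the right, this yields the equivalence. In fact, once the single auxiliary claim ``$f(\beta)=\tilde g(t_\beta)$ for all formulas $\beta$, equivalently $\tilde g(s)=f(\alpha_s)$ for all terms $s$, whenever the valuation $f$ and the assignment $g$ agree on variables'' is established, both \ref{form-to-term} and \ref{term-to-form} drop out immediately, so I would state and prove that claim once and invoke it twice.

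I do not expect a genuine obstacle here; the lemma is essentially bookkeeping. The only mildly delicate point is making sure the translation dictionary is watertight at the base cases and at the constant $0$ — since $0$ has no canonical preimage in $\Form$ and is instead coded as $\neg(\alpha\to\alpha)$, one must verify that whatever representative is chosen, its value under any valuation is $0\in\int$, so that the round-trip translations $s\mapsto\alpha_s\mapsto t_{\alpha_s}$ and $\beta\mapsto t_\beta\mapsto\alpha_{t_\beta}$ agree with the originals modulo provable (indeed $\int$-valid) equivalence. This is exactly the remark the authors have already flagged in the paragraph defining $0$ in $\Form$, so it is harmless, but it is the one place where a careless induction could slip.
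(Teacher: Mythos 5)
Your proposal is correct and follows exactly the route the paper intends: the paper simply declares the lemma an immediate consequence of equation~\eqref{eq:int-generates}, and your inductive claim that a valuation $f$ and an assignment $g$ agreeing on variables satisfy $f(\beta)=\li{g}(t_\beta)$ (equivalently $\li{g}(s)=f(\alpha_s)$) is precisely the routine bookkeeping being left implicit, including the observation that any representative $\neg(\alpha\to\alpha)$ of $0$ evaluates to $0$ in $\int$.
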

Also, recall that in the variety $\Delta$ any equation is equivalent to one of the form $t\approx 1$. Therefore, upon identifying formulas with $\Lan_{\Delta}$-terms, we see that the notion of semantic consequence defined above coincides with the equational consequence relation $\Log$.

Next, we introduce a logic $\Lo$ by adding finitely many axiom schemata to the usual axiomatisation of {\L}ukasiewicz propositional logic $\Luk$ (see e.g.~\cite[\S 4.3]{cdm2000}). To start with, let us recall the axioms for $\Luk$ (in the language expanded with the infinitary connective $\delta$). For arbitrary formulas $\alpha,\beta\in\Form$, these are:
\begin{multicols}{2}
\begin{enumerate}[label=(\L\arabic*), itemsep=0ex]
\item\label{axsch-MV1} $\alpha\to (\beta\to \alpha)$
\item\label{axsch-MV2} $(\alpha\to\beta)\to ((\beta\to\gamma)\to(\alpha\to\gamma))$
\item\label{axsch-MV3} $((\alpha\to\beta)\to\beta)\to((\beta\to\alpha)\to\alpha)$
\item\label{axsch-MV4} $(\neg\alpha\to\neg\beta)\to(\beta\to\alpha)$
\end{enumerate}
\end{multicols}

If $\alpha$ is any formula, we denote by $\langle \alpha\rangle$ the countably infinite sequence of constant value $\alpha$. Further, we write $\alpha,\langle \beta_i\rangle$ for the sequence $\alpha, \beta_1,\beta_2,\ldots$.
Finally, for convenience of notation, given any formula $\alpha$ we write
\[
\ha \alpha\coloneqq \delta(\alpha, \langle \neg (\alpha\to\alpha)\rangle).
\]
The logic $\Lo$ is defined by the axiom schemata \ref{axsch-MV1}--\ref{axsch-MV4} together with the following axioms: 
\begin{multicols}{2}
\begin{enumerate}[label=($\Delta$\arabic*), itemsep=0ex]
\item\label{axsch-Delta4} $\neg(\delta(\langle \alpha_i\rangle)\to \ha \alpha_1)\leftrightarrow \ha\delta(\langle \alpha_i\rangle_{i>1})$
\item\label{axsch-Delta5} $\ha\delta(\langle\alpha_i\rangle)\leftrightarrow \delta(\langle\ha\alpha_i\rangle)$ 
\item\label{axsch-Delta6} $\delta(\langle\alpha\rangle)\leftrightarrow \alpha$ 
\item\label{axsch-Delta10} $\ha\delta(\langle \alpha_i\rangle) \leftrightarrow \delta(\neg(\alpha\to\alpha),\langle \alpha_i\rangle)$
\item\label{axsch-Delta3} $\delta(\langle\alpha_i\rangle)\to\delta(\langle\neg\alpha_i\to\beta_i\rangle)$
\item\label{axsch-Delta2} $\ha\neg(\alpha\to\beta)\leftrightarrow \neg(\ha\alpha\to \ha\beta)$ 
\item\label{axsch-Delta9} $\delta(\langle \alpha_i\to \beta_i\rangle)\to (\delta(\langle \alpha_i\rangle)\to\delta(\langle \beta_i\rangle))$ 
\end{enumerate}
\end{multicols}
\noindent where $\langle \alpha_i \rangle_{i>1}$ denotes the truncated sequence $\alpha_2,\alpha_3,\ldots$ and $\alpha\leftrightarrow \beta$ stands for ``$\alpha\to\beta$ and $\beta\to\alpha$''. It is not difficult to see that an axiom of the form $\alpha\leftrightarrow \beta$ could be replaced by $\neg((\alpha\to\beta)\to\neg(\beta\to\alpha))$ (however, this would result in unwieldy expressions).

If $\eta$ is an ordinal number, its \emph{successor} is $\eta+1\coloneqq \eta\cup \{\eta\}$. The cardinal associated with $\eta$ is denoted by $\card{\eta}$. Let $\Theta\cup\{\alpha\}$ be an arbitrary set of formulas. A \emph{proof} of $\alpha$ from $\Theta$ is a sequence of formulas $(\alpha_i)_{i\in\eta+1}\in \Form^{\eta+1}$ such that $\card{\eta}\leq \aleph_0$, $\alpha=\alpha_{\eta}$, and each member of $(\alpha_i)_{i\in\eta}$ is either an axiom, or an element of $\Theta$, or can be obtained from (some of) its predecessors using one of the following rules: 
\begin{equation*}
\begin{gathered}
\infer[(\text{Modus Ponens})]
{\beta}{\alpha \hspace{2em} \alpha\to\beta}
\hspace{4em}
\infer[(\text{$\delta$-rule})]
{\delta(\langle\alpha_i\rangle)}{\alpha_1 \hspace{0.6em} \alpha_2 \hspace{0.6em} \cdots \hspace{0.6em} \alpha_i \hspace{0.6em} \cdots \hspace{1em} (i\in\omega)}
\end{gathered}
\end{equation*}
If a formula $\alpha$ admits a proof from a set of formulas $\Theta$, then we say that $\alpha$ is a \emph{syntactic consequence} of $\Theta$. The set of syntactic consequences of $\Theta$ is denoted by $\Theta^{\vdash}$. If $\alpha\in\emptyset^{\vdash}$, we say that $\alpha$ is \emph{provable} and write $\Lo \alpha$. 
For instance, the same proofs as for {\L}ukasiewicz logic (see~\cite[Proposition~4.3.4]{cdm2000}) show that $\Lo \alpha\to\alpha$, $\Lo \alpha\to\neg\neg\alpha$, and $\Lo \neg\neg\alpha\to\alpha$ for any formula $\alpha$.

We aim to prove that $\Lo$ is strongly complete with respect to the $\int$-valued semantics of formulas given above. To this end, we exploit the well known construction of the Lindenbaum-Tarski algebra of a propositional logic.
Let us define a relation $\equiv$ on $\Form$ by setting, for all $\alpha,\beta\in\Form$, 
\[
\alpha\equiv\beta \ \text{ if, and only if, } \ (\, \Lo \alpha\to \beta \ \text{and} \ \Lo\beta\to\alpha \, ).
\]
For every formula $\alpha$, let $[\alpha]\coloneqq \{\beta\in \Form\mid \alpha\equiv \beta\}$.

\begin{proposition}
The following statements hold:
\begin{enumerate}[label=\textup{(\alph*)}]
\item The relation $\equiv$ is a congruence on the $\Lan_{\Delta}$-algebra $\Form$.
\item The quotient $\Form/{\equiv}=\{[\alpha]\mid \alpha\in\Form\}$ is a $\delta$-algebra.
\end{enumerate}
\end{proposition}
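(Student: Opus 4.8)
The plan is to verify the two claims in turn, following the standard Lindenbaum--Tarski recipe while being careful about the infinitary operation $\delta$.

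\textbf{Part (a): $\equiv$ is a congruence.} First I would check that $\equiv$ is an equivalence relation. Reflexivity follows from $\Lo\alpha\to\alpha$, which was noted above. Symmetry is immediate from the symmetric form of the definition. For transitivity, from $\Lo\alpha\to\beta$ and $\Lo\beta\to\gamma$ I would derive $\Lo\alpha\to\gamma$ using axiom schema~\ref{axsch-MV2} and Modus Ponens (exactly as in {\L}ukasiewicz logic, cf.~\cite[\S 4.3]{cdm2000}); the other direction is symmetric. Next, compatibility with the operations. For $\neg$: from $\Lo\alpha\to\beta$ and $\Lo\beta\to\alpha$ one gets $\Lo\neg\beta\to\neg\alpha$ and $\Lo\neg\alpha\to\neg\beta$ via~\ref{axsch-MV4} and the already-available $\Lo\gamma\to\neg\neg\gamma$, $\Lo\neg\neg\gamma\to\gamma$ --- this is again routine {\L}ukasiewicz bookkeeping. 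For $\to$ (equivalently $\oplus$): from $\alpha\equiv\alpha'$ and $\beta\equiv\beta'$ I would deduce $\alpha\to\beta\equiv\alpha'\to\beta'$; this is provable in $\Luk$ alone, so the same derivations apply. The genuinely new point is compatibility with $\delta$. Suppose $\alpha_i\equiv\beta_i$ for every $i\in\omega$, so $\Lo\alpha_i\to\beta_i$ for all $i$. By the $\delta$-rule applied to the family $\langle\alpha_i\to\beta_i\rangle$ we obtain $\Lo\delta(\langle\alpha_i\to\beta_i\rangle)$, and then axiom~\ref{axsch-Delta9} together with Modus Ponens gives $\Lo\delta(\langle\alpha_i\rangle)\to\delta(\langle\beta_i\rangle)$. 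Symmetrically $\Lo\delta(\langle\beta_i\rangle)\to\delta(\langle\alpha_i\rangle)$, so $\delta(\langle\alpha_i\rangle)\equiv\delta(\langle\beta_i\rangle)$. Hence $\equiv$ is a congruence on the $\Lan_\Delta$-algebra $\Form$.

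\textbf{Part (b): $\Form/{\equiv}$ is a $\delta$-algebra.} Since $\equiv$ is a congruence by part (a), the quotient $\Form/{\equiv}$ is an $\Lan_\Delta$-algebra, with $0$ interpreted as $[\neg(\alpha\to\alpha)]$ for any $\alpha$ (well-defined because $\Lo\neg(\alpha\to\alpha)\to\neg(\beta\to\beta)$ and conversely, both being provably equivalent to the constant $0$ in $\Luk$). It remains to check the axioms of Definition~\ref{d:delta}. That the $\oplus,\neg,0$ reduct is an MV-algebra is precisely the classical Lindenbaum--Tarski theorem for $\Luk$, whose proof uses only~\ref{axsch-MV1}--\ref{axsch-MV4} and is carried out in~\cite[Proposition~4.6.2 and Lemma~4.6.3]{cdm2000}; I would cite this. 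For the six $\delta$-identities~\ref{Delta1}--\ref{Delta6}, the strategy is uniform: each identity, after translating $\fm$ and $\d$ into the primitive connectives and rewriting ``$s\leq t$'' as ``$s\to t$ equals $1$'', corresponds to exactly one of the axiom schemata \ref{axsch-Delta2}--\ref{axsch-Delta10}. Concretely, \ref{Delta3} (i.e.\ $\delta(x,x,\dots)=x$) is witnessed by~\ref{axsch-Delta6}; \ref{Delta2} ($\fm\delta(\vec x)=\delta(\fm x_i)$) by~\ref{axsch-Delta5}, once one observes that $\fm\alpha=\delta(\alpha,\vec 0)$ is provably equivalent to $\ha\alpha=\delta(\alpha,\langle\neg(\alpha\to\alpha)\rangle)$ and that each $0$-entry of the sequence may be replaced by $\neg(\alpha\to\alpha)$ inside $\delta$ --- this replacement being exactly~\ref{axsch-Delta10} together with the congruence property for $\delta$; \ref{Delta4} ($\delta(0,\vec x)=\fm\delta(\vec x)$) is~\ref{axsch-Delta10} read in the other direction; \ref{Delta1} is~\ref{axsch-Delta4} (the left-hand side $\d(\delta(\vec x),\delta(x_1,\vec 0))$ unwinds, using involutivity of $\neg$ and the MV-identities already available, to $\neg(\delta(\langle\alpha_i\rangle)\to\ha\alpha_1)$, and the right-hand side $\delta(0,x_2,x_3,\dots)$ to $\ha\delta(\langle\alpha_i\rangle_{i>1})$); \ref{Delta5} is~\ref{axsch-Delta3}; and \ref{Delta6} ($\fm(x\ominus y)=\fm(x)\ominus\fm(y)$) is~\ref{axsch-Delta2}, after translating $a\ominus b=a\odot\neg b=\neg(\neg a\oplus\neg\neg b)$ and $\ha=\fm$. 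In each case the required equality in $\Form/{\equiv}$ is $[s]=[t]$, which by definition of the quotient means $\Lo s\to t$ and $\Lo t\to s$; the relevant biconditional axiom schema supplies both halves directly (for the inequality~\ref{Delta5}, only the forward implication is needed, and~\ref{axsch-Delta3} supplies it), and one then instantiates the schema at the equivalence classes of generic formulas. A small amount of care is needed in checking that the term-level manipulations I just invoked --- replacing $\fm$ by $\ha$, $\vec 0$-entries by $\neg(\alpha\to\alpha)$-entries, and rewriting $\d$ and $\ominus$ in primitive form --- are themselves provable equivalences; but each of these reductions is either a definitional unfolding or an instance of an MV-identity holding in $\Form/{\equiv}$, so it causes no difficulty once the MV-reduct is known to be an MV-algebra.

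\textbf{Main obstacle.} I expect the only real subtlety to be the bookkeeping around the operation $\delta$: making sure that the axioms $\ha\alpha=\delta(\alpha,\langle\neg(\alpha\to\alpha)\rangle)$, $\fm\alpha=\delta(\alpha,\vec 0)$, and the ``replace $0$ by $\neg(\alpha\to\alpha)$ inside $\delta$'' moves line up exactly, and that the congruence property established in part (a) is genuinely needed (and sufficient) to push equivalences through the infinitary operation. The MV-algebraic part is entirely classical and I would dispatch it by citation; the new content is purely the translation between the seven $\Delta$-axiom schemata and the six defining identities of a $\delta$-algebra, which is a finite verification.
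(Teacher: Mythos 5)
Your part (a) and your dictionary between the axiom schemata and identities \ref{Delta2}--\ref{Delta6} of Definition~\ref{d:delta} coincide with the paper's proof (compatibility of $\equiv$ with $\delta$ via the $\delta$-rule and \ref{axsch-Delta9}; the MV-part by citation; \ref{Delta2}$\leftrightarrow$\ref{axsch-Delta5}, \ref{Delta3}$\leftrightarrow$\ref{axsch-Delta6}, \ref{Delta4}$\leftrightarrow$\ref{axsch-Delta10}, \ref{Delta5}$\leftrightarrow$\ref{axsch-Delta3}, \ref{Delta6}$\leftrightarrow$\ref{axsch-Delta2}). The genuine gap is in your treatment of identity~\ref{Delta1}. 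You claim that the left-hand side $\d\left(\delta(\vec{x}),\delta(x_1,\vec{0})\right)$ ``unwinds, using involutivity of $\neg$ and the MV-identities already available,'' to $\neg(\delta(\langle\alpha_i\rangle)\to\ha\alpha_1)$, so that \ref{axsch-Delta4} finishes the job. But Chang's distance is the symmetric term $(a\ominus b)\oplus(b\ominus a)$, and only the summand $a\ominus b=\neg(a\to b)$ (with $a=\delta(\vec{x})$, $b=\delta(x_1,\vec{0})$) matches the left-hand side of~\ref{axsch-Delta4}. To make the other summand disappear you need $\delta([\alpha_1],\vec{0})\ominus\delta(\vec{[\alpha_i]})=0$, i.e.\ the inequality $\delta([\alpha_1],\vec{0})\leq\delta(\vec{[\alpha_i]})$ in the Lindenbaum-Tarski algebra. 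This is neither a definitional unfolding nor an instance of an MV-identity; it is a genuine fact about $\delta$ that must be derived in the calculus, and it is precisely where the paper's proof does its real work: it first proves the auxiliary claim that $\Lo\neg\beta\to\alpha$ whenever $\beta$ is provable, then applies the $\delta$-rule to the sequence $\alpha_1\to\alpha_1,\ \neg\beta\to\alpha_2,\ \neg\beta\to\alpha_3,\ldots$ and uses \ref{axsch-Delta9} with Modus Ponens to obtain $\Lo\delta(\alpha_1,\langle\neg\beta\rangle)\to\delta(\langle\alpha_i\rangle)$. Only after this does $\d$ collapse to the single negated implication, at which point \ref{axsch-Delta4} and \ref{axsch-Delta10} apply as you say.

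The gap is localized and fixable: either reproduce the paper's derivation just described, or alternatively obtain the missing inequality inside the quotient from \ref{axsch-Delta3} by instantiating it at the sequence $\gamma_1=\alpha_1$, $\gamma_i=\neg(\alpha\to\alpha)$ for $i>1$, with $\beta_1=\neg(\alpha\to\alpha)$ and $\beta_i=\alpha_i$ for $i>1$; then the consequent rewrites, using the MV-identities $c\oplus 0=c$ and $0\oplus c=c$ together with the congruence property for $\delta$ from part (a), to $\delta(\vec{[\alpha_i]})$, while the antecedent is $\delta([\alpha_1],\vec{0})$. Either way, some explicit argument for $\delta([\alpha_1],\vec{0})\leq\delta(\vec{[\alpha_i]})$ is required; as written, your reduction of \ref{Delta1} to \ref{axsch-Delta4} alone does not go through.
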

\begin{proof}
(a) Reasoning in the same way as for {\L}ukasiewicz logic, it is not difficult to see that $\equiv$ is an equivalence relation that is compatible with the MV-algebraic operations (cf.~\cite[Theorem~4.4.1]{cdm2000}). It remains to show that, given sequences of formulas $(\alpha_i)_{i\in\omega}$ and $(\beta_i)_{i\in\omega}$, if $\alpha_i\equiv \beta_i$ for every $i\in\omega$ then $\delta\langle \alpha_i\rangle \equiv \delta\langle \beta_i\rangle$. In turn, this follows from~\ref{axsch-Delta9} and the $\delta$-rule.

(b) Again, adapting the corresponding proof for {\L}ukasiewicz logic (cf.~\cite[Corollary~4.4.4]{cdm2000}), it is not difficult to see that $\Form/{\equiv}$ is an MV-algebra. So, it suffices to prove that $\Form/{\equiv}$ satisfies  equations~\ref{Delta1}--\ref{Delta6} in Definition~\ref{d:delta}.

For~\ref{Delta1}, we start by proving that $\delta([\alpha_1],\vec{0})\ominus \delta(\vec{[\alpha_i]})=0$, which will imply
\begin{equation}\label{eq:distance-halved}
\d(\delta(\vec{[\alpha_i]}),\delta([\alpha_1],\vec{0}))=\neg (\delta(\vec{[\alpha_i]})\to \delta([\alpha_1],\vec{0})).
\end{equation}
Note that $\delta([\alpha_1],\vec{0})\ominus \delta(\vec{[\alpha_i]})=0$ if, and only if, $\delta([\alpha_1],\vec{0})\to \delta(\vec{[\alpha_i]})=1$. We now make use of the following easy observation:
\begin{claim*} 
If $\beta$ is a provable formula, then $\Lo \neg \beta\to \alpha$ for any formula $\alpha$.
\end{claim*} 
\begin{proof}
Recall that $\Lo \beta\to \neg\neg\beta$. If $\Lo \beta$ then, by Modus Ponens, $\Lo \neg\neg\beta$. So, for any formula $\alpha$, we have $\Lo \neg\alpha\to \neg\neg\beta$ by~\ref{axsch-MV1} and therefore $\Lo\neg\beta\to\alpha$ by~\ref{axsch-MV4}.
\end{proof}
Let us fix an arbitrary provable formula $\beta$ (for instance, an axiom). Then $\Lo \alpha_1\to \alpha_1$ and, by the Claim, $\Lo \neg\beta\to \alpha_i$ for all $i>1$. An application of the $\delta$-rule yields
\[
\Lo\delta(\alpha_1\to\alpha_1,\langle \neg \beta\to\alpha_i\rangle_{i>1}).
\] 
By~\ref{axsch-Delta9} and Modus Ponens, we get $\Lo \delta(\alpha_1,\langle \neg\beta\rangle)\to\delta(\langle \alpha_i\rangle)$. Hence $\delta([\alpha_1],\vec{0})\to \delta(\vec{[\alpha_i]})=1$.
Thus, by equation~\eqref{eq:distance-halved},
\[
\d(\delta(\vec{[\alpha_i]}),\delta([\alpha_1],\vec{0}))=\neg(\delta(\vec{[\alpha_i]})\to \delta([\alpha_1],\vec{0}))= \neg(\delta(\vec{[\alpha_i]})\to \ha\alpha_1).
\]
In turn,~\ref{axsch-Delta4} and~\ref{axsch-Delta10} entail
\[
\neg(\delta(\vec{[\alpha_i]})\to \ha\alpha_1)= \ha\delta(\vec{[\alpha_i]}_{i>1})=\delta(0,\vec{[\alpha_i]}_{i>1}).
\]
This settles item~\ref{Delta1} in Definition~\ref{d:delta}. Items~\ref{Delta2},~\ref{Delta3},~\ref{Delta4},~\ref{Delta5} and~\ref{Delta6} follow at once from~\ref{axsch-Delta5}, \ref{axsch-Delta6}, \ref{axsch-Delta10}, \ref{axsch-Delta3} and \ref{axsch-Delta2}, respectively.
\end{proof}

We refer to the $\delta$-algebra $\Form/{\equiv}$ as the \emph{Lindenbaum-Tarski algebra} of the logic $\Lo$, and denote it by $\LT$. 
Note that, by~\ref{axsch-MV1}, for any provable formula $\alpha$ we have $\alpha\equiv\beta$ if, and only if, $\beta$ is provable. As
\[
1=\neg[\neg(\alpha\to \alpha)]=[\alpha\to\alpha]
\]
in the Lindenbaum-Tarski algebra $\LT$, we see that $[\beta]=1$ if, and only if, $\beta$ is provable.

\begin{theorem}[Completeness]\label{th:completeness}
A formula $\alpha\in\Form$ is provable if, and only if, it is a tautology. That is, $\emptyset^{\vdash}=\emptyset^{\vDash}$. 
\end{theorem}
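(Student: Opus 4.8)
The plan is to prove the two inclusions separately. For \emph{soundness}, $\emptyset^{\vdash}\subseteq\emptyset^{\vDash}$, I would argue by induction on the length of a proof, so that it suffices to check that every axiom is a tautology and that the two inference rules preserve tautologies. The schemata \ref{axsch-MV1}--\ref{axsch-MV4} are the usual axioms of $\Luk$ and are well known to be valid in $\int$. For each of the remaining schemata one translates the formula into the corresponding $\Lan_{\Delta}$-term and verifies the identity (or inequality) in $\int$ under the interpretation $\delta(\vec{x})=\sum_{i\geq 1}x_i/2^i$; for instance \ref{axsch-Delta6} amounts to $\sum_{i\geq 1}a/2^i=a$, while \ref{axsch-Delta3} and \ref{axsch-Delta9} follow from the monotonicity of $\delta$ in $\int$, and \ref{axsch-Delta4}, \ref{axsch-Delta5}, \ref{axsch-Delta10}, \ref{axsch-Delta2} are routine computations with the truncated operations. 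Modus Ponens preserves validity by the standard MV-algebraic argument, and the $\delta$-rule does so because $f(\alpha_i)=1$ for every $i\in\omega$ forces $f(\delta(\langle\alpha_i\rangle))=\sum_{i\geq 1}2^{-i}=1$.

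For \emph{completeness}, $\emptyset^{\vDash}\subseteq\emptyset^{\vdash}$, the key tool is the Lindenbaum--Tarski $\delta$-algebra $\LT=\Form/{\equiv}$, together with the fact recorded just above the theorem that $[\beta]=1$ in $\LT$ if and only if $\beta$ is provable. Write $q\colon\Form\epi\LT$ for the quotient map, which is an $\Lan_{\Delta}$-homomorphism. By Theorem~\ref{t:semisimple-and-full}\ref{Delta-unit} the unit $\eta_{\LT}\colon\LT\to\C(\Max{\LT},\int)$ is an injective $\delta$-homomorphism; and for each $h\in\Max{\LT}$, using Theorem~\ref{t:semisimple-and-full}\ref{Delta-full} to know that the MV-homomorphism $h$ also preserves $\delta$, the composite $h\circ q\colon\Form\to\int$ is an $\Lan_{\Delta}$-homomorphism, i.e.\ a valuation.

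Now let $\alpha$ be a tautology. Then $(h\circ q)(\alpha)=1$ for every $h\in\Max{\LT}$, so $\w{q(\alpha)}$ is the constant function of value $1$ on $\Max{\LT}$; that is, $\eta_{\LT}(q(\alpha))=\eta_{\LT}(1)$. Since $\eta_{\LT}$ is injective we obtain $q(\alpha)=1$ in $\LT$, i.e.\ $[\alpha]=1$, and hence $\alpha$ is provable. Together with soundness this yields $\emptyset^{\vdash}=\emptyset^{\vDash}$. I expect the completeness direction to be essentially dictated by the machinery already in place --- semisimplicity of $\delta$-algebras and the identification of $[\beta]=1$ with provability do all the work --- so that the only genuinely laborious part of the argument is the soundness verification, namely checking the seven $\Delta$-schemata in $\int$ with care for the behaviour of the truncated operations $\oplus$ and $\ominus$.
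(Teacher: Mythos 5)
Your proposal is correct and follows essentially the same route as the paper: soundness by checking that the axioms are tautologies and the rules preserve them, and completeness via the Lindenbaum--Tarski algebra $\LT$, the identification of $[\beta]=1$ with provability, and semisimplicity of $\delta$-algebras (the paper phrases this contrapositively, extracting a $\delta$-homomorphism $\LT\to\int$ with $h([\alpha])\neq 1$, which is the same use of Theorem~\ref{t:semisimple-and-full}\ref{Delta-unit} as your injectivity-of-$\eta_{\LT}$ argument).
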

\begin{proof}
As the axioms for $\Lo$ are easily seen to be tautologies, and Modus Ponens and the $\delta$-rule preserve tautologies, we have $\emptyset^{\vdash}\subseteq\emptyset^{\vDash}$. For the converse inclusion, assume that ${\alpha\notin \emptyset^{\vdash}}$. Then $[\alpha]\neq 1$ in the Lindenbaum-Tarski algebra $\LT$. Since any $\delta$-algebra embeds into a power of the $\delta$-algebra $\int$ by Theorem~\ref{t:semisimple-and-full}\ref{Delta-unit}, there exists a $\delta$-homomorphism $h\colon \LT\to \int$ such that $h([\alpha])\neq 1$. Composing $h$ with the canonical quotient $\Form\twoheadrightarrow \LT$, we obtain a valuation $\Form \to \int$ that does not satisfy $\alpha$. Therefore, $\alpha\notin \emptyset^{\vDash}$.
\end{proof}

Finally, we improve the previous result by showing that the syntactic and semantic notions of consequence coincide for arbitrary sets of premises. To this end, recall that a subspace $E$ of a topological space is \emph{Lindel\"of} if any open cover of $E$ admits a countable subcover.

\begin{theorem}[Strong Completeness]\label{th:strong-comp}
For any set of formulas $\Theta$, $\Theta^{\vdash}=\Theta^{\vDash}$. 
\end{theorem}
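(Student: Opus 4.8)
The plan is to prove the two inclusions $\Theta^{\vdash}\subseteq\Theta^{\vDash}$ and $\Theta^{\vDash}\subseteq\Theta^{\vdash}$ separately. The first (soundness) is routine: fixing a valuation $f$ satisfying every formula of $\Theta$, I would show by transfinite induction along a proof $(\alpha_{i})_{i\in\eta+1}$ from $\Theta$ that $f$ satisfies each $\alpha_{i}$ --- the axioms of $\Lo$ are tautologies by Theorem~\ref{th:completeness}, members of $\Theta$ are satisfied by hypothesis, Modus Ponens plainly preserves satisfaction by $f$, and so does the $\delta$-rule, since $\delta(\langle 1,1,\dots\rangle)=\sum_{i\geq 1}2^{-i}=1$ in $\int$. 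As proofs are indexed by a countable ordinal, ordinary transfinite induction suffices.

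For the converse inclusion I would first reduce to the case of a countable $\Theta$ by a Lindel\"of argument. Valuations are exactly the $\Lan_{\Delta}$-homomorphisms $\Form\to\int$, and since $\Form$ is absolutely free over the \emph{countable} set $\var$, they may be identified with the points of the space $\int^{\var}$ (a countable product of copies of $[0,1]$, which by Proposition~\ref{p:spectrum-of-free} is $\Max{\F(\var)}$), which is compact metrizable, in particular second-countable. For each $\beta\in\Theta$ the set $U_{\beta}\coloneqq\{f\mid f(\beta)<1\}$ is open, because evaluation at $\beta$ is continuous (the interpretation of $\delta$ being a uniformly convergent series); hence $W\coloneqq\bigcup_{\beta\in\Theta}U_{\beta}$ is an open subspace of a second-countable space, hence Lindel\"of, and extracting a countable subcover yields a countable $\Theta_{0}\subseteq\Theta$ with $\bigcup_{\beta\in\Theta_{0}}U_{\beta}=W$. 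A valuation then satisfies $\Theta_{0}$ exactly when it satisfies $\Theta$, so $\Theta_{0}^{\vDash}=\Theta^{\vDash}$, and in particular $\alpha\in\Theta_{0}^{\vDash}$; since trivially $\Theta_{0}^{\vdash}\subseteq\Theta^{\vdash}$, it is enough to prove the inclusion for countable $\Theta$.

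So assume $\Theta$ is countable. If $\Theta=\emptyset$ this is Theorem~\ref{th:completeness}; otherwise I would, repeating a formula if necessary, write $\Theta=\{\beta_{1},\beta_{2},\dots\}$ as a countably infinite list and put $\gamma\coloneqq\delta(\langle\beta_{i}\rangle)$. For any valuation $f$ one has $f(\gamma)=\sum_{i\geq 1}f(\beta_{i})/2^{i}$, which equals $1$ precisely when $f(\beta_{i})=1$ for all $i$; thus $\gamma$ has the same models as $\Theta$, so $\{\gamma\}^{\vDash}=\Theta^{\vDash}$. Moreover $\gamma\in\Theta^{\vdash}$: the set $\Theta^{\vdash}$ is closed under Modus Ponens and the $\delta$-rule --- a concatenation of countably many proofs of countable length is again a proof of countable length --- and $\gamma$ follows from $\beta_{1},\beta_{2},\dots\in\Theta$ by one application of the $\delta$-rule. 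Consequently $\{\gamma\}^{\vdash}\subseteq\Theta^{\vdash}$, and it will suffice to show $\alpha\in\{\gamma\}^{\vdash}$.

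To conclude, I would translate $\alpha\in\{\gamma\}^{\vDash}$ into $\{t_{\gamma}\approx 1\}\Log t_{\alpha}\approx 1$ by Lemma~\ref{l:translation-formulas-terms}, apply the Local Deduction Theorem (Lemma~\ref{l:deduction}) with empty hypothesis set to obtain a $k\in\N$ with $(t_{\gamma}^{k}\to t_{\alpha})\approx 1$ valid in $\Delta$, and translate back, via Lemma~\ref{l:translation-formulas-terms} and Theorem~\ref{th:completeness}, to conclude that the formula $\gamma^{(k)}\to\alpha$ is provable, where $\gamma^{(k)}$ abbreviates the $k$-fold product $\gamma\odot\cdots\odot\gamma$ formed with the derived connectives of $\Form$. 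Since $x\to(x^{j-1}\to x^{j})\approx 1$ holds in $\int$ for every $j\geq 2$, Theorem~\ref{th:completeness} makes each formula $\gamma\to(\gamma^{(j-1)}\to\gamma^{(j)})$ provable, and a short induction on $j$ --- two applications of Modus Ponens per step, from $\gamma$ and from $\gamma^{(j-1)}$ --- shows $\gamma^{(j)}\in\{\gamma\}^{\vdash}$ for all $j$; in particular $\gamma^{(k)}\in\{\gamma\}^{\vdash}$, and one last Modus Ponens on $\gamma^{(k)}$ and $\gamma^{(k)}\to\alpha$ produces $\alpha\in\{\gamma\}^{\vdash}\subseteq\Theta^{\vdash}$. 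The step I expect to be the main obstacle is the book-keeping in passing between formulas and $\Lan_{\Delta}$-terms --- in particular, ensuring that validity of $t_{\gamma}^{k}\to t_{\alpha}$ really delivers provability of $\gamma^{(k)}\to\alpha$ and that $\gamma^{(k)}$ is derivable from $\gamma$ --- together with making the Lindel\"of reduction fully precise.
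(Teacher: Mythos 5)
Your proposal is correct and follows essentially the same route as the paper: a Lindel\"of argument reduces to countable $\Theta$, the premises are aggregated into a single $\delta$-formula, the Local Deduction Theorem for $\Log$ supplies the exponent $k$, and the translation lemma together with weak completeness converts validity of $t_{\gamma}^{k}\to t_{\alpha}$ into provability, after which Modus Ponens and the $\delta$-rule finish the argument. The only cosmetic differences are that you form $\gamma=\delta(\langle\beta_i\rangle)$ directly at the formula level and obtain $\gamma^{(k)}$ from the valid, hence provable, formulas $\gamma\to(\gamma^{(j-1)}\to\gamma^{(j)})$, whereas the paper works with the translated formulas $\tilde{\beta}_i$ and proves closure of $\Theta^{\vdash}$ under the derived connective $\odot$ by explicit syntactic derivations; both versions require the same formula/term bookkeeping that you flag.
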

\begin{proof}
We only prove that $\Theta^{\vDash}\subseteq\Theta^{\vdash}$, as the other inclusion is readily seen to hold. 
We first settle the case where $\Theta$ is countable and then deduce the general one. It is convenient to assume that $\Theta=\{\beta_i\mid i\in\omega\}$ (if the original set of formulas is finite, it suffices to add countably many copies of an arbitrary provable formula, e.g.\ an axiom). If $\alpha\in\Theta^{\vDash}$, then 
\[
\{t_{\beta_i}\approx 1\mid i\in\omega\}\Log t_{\alpha}\approx 1
\]
by Lemma~\ref{l:translation-formulas-terms}\ref{form-to-term}. Let $\x$ be a set such that the variables of each $\beta_i$ are contained in $\x$ and note that, for any assignment $f\colon \x\to\int$,
\[
\int, f\models \{t_{\beta_i}\approx 1\mid i\in\omega\} \ \Longleftrightarrow \ \int, f\models \delta(\vec{t_{\beta_i}})\approx 1.
\] 
Just observe that, for all $\vec{r}\in\int^{\omega}$, $\sum_{i=1}^{\infty}\tfrac{r_i}{2^i}=1$ if, and only if, $\vec{r}$ is the constant sequence of value $1$. Thus $\{\delta(\vec{t_{\beta_i}})\approx 1\}\Log t_{\alpha}\approx 1$ and so, by Lemma~\ref{l:deduction}, there is a $k\in\N$ such that 
\[
\emptyset \Log (\delta(\vec{t_{\beta_i}})^k\to t_{\alpha})\approx 1.
\]
Let $\alpha_{t_{\alpha}}$ and $\gamma$ be the formulas associated, respectively, with the terms $t_{\alpha}$ and $\delta(\vec{t_{\beta_i}})^k$.
An application of Lemma~\ref{l:translation-formulas-terms}\ref{term-to-form} yields $\gamma\to \alpha_{t_{\alpha}}\in\emptyset^{\vDash}$ and so, by Theorem~\ref{th:completeness}, $\gamma\to \alpha_{t_{\alpha}}\in\emptyset^{\vdash}$. Upon observing that $\Lo \alpha_{t_{\alpha}}\to \alpha$, we get $\gamma\to \alpha\in\emptyset^{\vdash}$ by~\ref{axsch-MV2}. To conclude that $\alpha\in\Theta^{\vdash}$, it remains to prove that $\gamma\in\Theta^{\vdash}$.

If $\tilde{\beta}_i$ is the formula associated with $t_{\beta_i}$, then it is not difficult to see that $\Lo \beta_i\to \tilde{\beta}_i$, and so $\tilde{\beta}_i\in\Theta^{\vdash}$ for all $i\in\omega$. Hence, by the $\delta$-rule, also $\delta(\langle \tilde{\beta}_i\rangle)\in \Theta^{\vdash}$. Now, for any two terms $s,t$ corresponding to formulas $\alpha_s$ and $\alpha_t$, respectively, the term $s\odot t$ corresponds to the formula $\neg(\neg\neg\alpha_s\to\neg\alpha_t)$. We claim that $\neg(\neg\neg\alpha_s\to\neg\alpha_t)\in \Theta^{\vdash}$ whenever $\alpha_s,\alpha_t\in \Theta^{\vdash}$. An easy inductive argument then shows that $\gamma\in\Theta^{\vdash}$.

Suppose that $\alpha_s,\alpha_t\in \Theta^{\vdash}$ and note that, for any $\alpha,\beta,\gamma\in\Form$,
\begin{align}
&\Lo (\alpha\to (\beta\to\gamma))\to(\beta\to(\alpha\to\gamma)) \label{eq:der1}\\
&\Lo (\alpha\to\neg \beta)\to (\beta\to\neg\alpha). \label{eq:der2}
\end{align}
(The same proof as for {\L}ukasiewicz logic applies here, mutatis mutandis, cf.~\cite[Proposition~4.3.4]{cdm2000}.) Equation~\eqref{eq:der1}, together with $\Lo (\neg\neg\alpha\to\neg\beta)\to(\neg\neg\alpha\to\neg\beta)$, entails $\Lo \neg\neg\alpha\to ((\neg\neg\alpha\to\neg\beta)\to\neg\beta)$ and so, by~\eqref{eq:der2}, $\Lo \neg\neg\alpha\to (\beta\to \neg(\neg\neg\alpha\to\neg\beta))$. As $\Lo \alpha\to\neg\neg\alpha$, we get 
\[
\Lo \alpha\to (\beta\to \neg(\neg\neg\alpha\to\neg\beta)).
\]
Therefore, $\alpha_s,\alpha_t\in \Theta^{\vdash}$ entails $\neg(\neg\neg\alpha_s\to\neg\alpha_t)\in \Theta^{\vdash}$.

Now, for the general case, let $\Theta=\{\beta_i\mid i\in I\}$ be an arbitrary set of formulas. We claim that there exists a countable subset $\tilde{\Theta}\subseteq \Theta$ such that $\Theta^{\vDash}\subseteq\tilde{\Theta}^{\vDash}$. It then follows from the argument above that
\[
\Theta^{\vDash}\subseteq\tilde{\Theta}^{\vDash}\subseteq\tilde{\Theta}^{\vdash}\subseteq \Theta^{\vdash}.
\] 
Consider the set 
\[
S\coloneqq \V(\{\neg\quot(t_{\beta_i})\mid i\in I\})\subseteq \Max{\F(\x)}
\] 
where $\x\coloneqq \var$ is the countable set of propositional variables from which the formulas of $\Lo$ are built. By Proposition~\ref{p:spectrum-of-free}, $S$ can be identified with a closed subset of the Tychonoff cube $\int^{\x}$. Setting $S_i\coloneqq \V(\neg\quot(t_{\beta_i}))$ for each $i\in I$, we get $S=\bigcap_{i\in I}{S_i}$.
Recall that any second-countable space is Lindel\"of (see e.g.~\cite[Theorem~16.9]{Willard1970}) and any subspace of a second-countable space is also second-countable, so any subspace of a second-countable space is Lindel\"of. In particular, since $\int^{\x}$ is second-countable, the complement $S^{\complement}$ of $S$ is Lindel\"of. Hence there exists a countable subset $J\subseteq I$ such that $S^{\complement}\subseteq \bigcup_{i\in J}{S_i^{\complement}}$, and so
\begin{equation}\label{eq:countable-cover}
\bigcap_{i\in J}{S_i}\subseteq S.
\end{equation}
Note that the points of $S$ (respectively, of $\bigcap_{i\in J}{S_i}$) are in bijection with the assignments $f\colon \x\to \int$ that satisfy $\int, f\models t_{\beta_i}\approx 1$ for all $i\in I$ (respectively, for all $i\in J$). Thus, equation~\eqref{eq:countable-cover} entails that $\{t_{\beta_i}\approx 1\mid i\in J\}\Log \{t_{\beta_i}\approx 1\mid i\in I\}$. Upon setting $\tilde{\Theta}\coloneqq \{\beta_i\mid i\in J\}$, an application of Lemma~\ref{l:translation-formulas-terms}\ref{form-to-term} yields $\Theta^{\vDash}\subseteq\tilde{\Theta}^{\vDash}$.
\end{proof}

Let us remark that {\L}ukasiewicz logic $\Luk$ is complete---but \emph{not} strongly complete---with respect to its usual (Bolzano-Tarski) $\int$-valued semantics. This problem can be rectified by considering a different notion of model for sets of $\Luk$-formulas, giving rise to a \emph{differential semantics} of {\L}ukasiewicz logic \cite{Mundici2015}. This contrasts with Theorem~\ref{th:strong-comp}, which establishes the strong completeness of $\Lo$ with respect to its $\int$-valued semantics. 

\begin{remark}
Although we have defined formulas starting from a countably infinite set of propositional variables, the results in this section can be generalised to uncountable sets of variables. The proof of the Completeness theorem remains unchanged, as a single formula $\alpha$ can only use countably many variables. We briefly indicate how to adapt the proof of the Strong Completeness theorem. Let $\Theta=\{\beta_i\mid i\in I\}$ be any set of formulas with variables in a (possibly uncountable) set $\y$. For the interesting direction, we must prove that for any formula $\alpha$ with variables in $\y$, $\alpha\in\Theta^{\vDash}$ entails $\alpha\in \Theta^{\vdash}$. Suppose that $\alpha\in\Theta^{\vDash}$. It suffices to show that $\alpha\in\tilde{\Theta}^{\vDash}$ for a countable subset $\tilde{\Theta}\subseteq \Theta$, for then the set of variables appearing in either $\alpha$ or one of the formulas in $\tilde{\Theta}$ is countable, and so Theorem~\ref{th:strong-comp} yields $\alpha\in \tilde{\Theta}^{\vdash}\subseteq \Theta^{\vdash}$.

Denote by $\x$ the countable subset of $\y$ consisting of the variables appearing in $\alpha$, and let $\pi\colon \int^{\y}\twoheadrightarrow \int^{\x}$ be the projection map. Set $S_i\coloneqq \V(\neg\quot(t_{\beta_i}))\subseteq \int^{\y}$ for each $i\in I$, and $T\coloneqq \V(\neg\quot(t_{\alpha}))\subseteq \int^{\x}$. Reasoning as in the proof of the Strong Completeness theorem, we see that $\alpha\in\Theta^{\vDash}$ implies
\[
\bigcap_{i\in I}{S_i}\subseteq \pi^{-1}(T).
\]
Because $\int^{\x}$ is second-countable, $T^{\complement}$ is Lindel\"of. By \cite[Theorem~3.8.8]{engelking}, the preimage $\pi^{-1}(T^{\complement})=\pi^{-1}(T)^{\complement}$ is also Lindel\"of. Hence, there exists a countable subset $J\subseteq I$ such that $\pi^{-1}(T)^{\complement}\subseteq \bigcup_{i\in J}{S_i^{\complement}}$, and so
\[
\bigcap_{i\in J}{S_i}\subseteq \pi^{-1}(T).
\]
It follows easily that $\alpha\in\tilde{\Theta}^{\vDash}$ where $\tilde{\Theta}\coloneqq \{\beta_i\mid i\in J\}$, as desired.
\end{remark}

\section{Conclusion}
In~\cite[p.~467]{st2}, Stone observes that the proof of Weierstrass' Approximation Theorem can be divided into two parts. The first part, of which he provides a  generalisation, can be regarded, in his words, as ``algebraico-topological'', while he refers to the second part as the ``analytical kernel'' of the proof. On the other hand, Banaschewski showed in~\cite{Banaschewski2001} that even this analytical kernel can be proved by algebraic means in the setting of $f$-rings. In a sense, Banaschewski fully brought out the \emph{algebraic} content of the Stone-Weierstrass Theorem, thus concluding a process started by Stone himself.

In this paper, we have exposed the \emph{logical} content of the Stone-Weierstrass Theorem by showing that it can be ultimately regarded as an equivalent form of the Beth definability property of the logic $\Log$ (see Remark~\ref{rem:SW-implies-Beth}). 
In the same spirit, other properties of compact Hausdorff spaces admit a translation into properties of the logic $\Log$, and vice versa. For instance, the Joint Consistency Theorem for $\Log$ yields a Urysohn's Lemma for Tychonoff cubes (Remark~\ref{rem:Urysohn}), and it is not difficult to see that the fact that any continuous map between compact Hausdorff spaces is closed implies the deductive interpolation property of $\Log$ (in fact, even the right uniform deductive interpolation property~\cite{vGMT}). 

\begin{ack}
I am grateful to Nick Bezhanishvili for several useful comments on an earlier draft of this article, and to the anonymous referees for their valuable suggestions and for drawing my attention to the completeness problem for the logic $\Log$, which led to the results in Section~\ref{s:Hilbert-calculus}.
\end{ack}

\bibliographystyle{amsplain}

\providecommand{\MR}{\relax\ifhmode\unskip\space\fi MR }
\providecommand{\MRhref}[2]{%
  \href{http://www.ams.org/mathscinet-getitem?mr=#1}{#2}
}
\providecommand{\href}[2]{#2}

\end{document}